\documentclass[a4paper,10pt]{article}

\usepackage{latexsym}
\usepackage{amsmath}
\usepackage{amsthm}
\usepackage{amssymb}
\usepackage{amscd}
\usepackage{bm}
\usepackage{graphicx}
\usepackage{wrapfig}
\usepackage{fancybox}
\usepackage{enumerate}
\pagestyle{plain}

\usepackage{url}

\newcommand{\bbc}{{\mathbb C}}

\newcommand{\bbn}{{\mathbb N}}

\newcommand{\bbr}{{\mathbb R}}
\newcommand{\bbs}{{\mathbb S}}

\newcommand{\bbz}{{\mathbb Z}}

\newcommand{\al}{{\alpha}}

\newcommand{\gam}{{\gamma}}

\newcommand{\del}{{\delta}}
\newcommand{\Del}{{\Delta}}
\newcommand{\ep}{{\epsilon}}

\newcommand{\vph}{{\varphi}}
\newcommand{\Ome}{{\Omega}}
\newcommand{\lam}{{\lambda}}

\newcommand{\sig}{{\sigma}}

\newcommand{\im}{{\operatorname {Im}}}
\newcommand{\re}{{\operatorname {Re}}}

\newcommand{\R}{\bbr}
\newcommand{\C}{\bbc}

\newcommand{\ovl}[1]{{\overline{#1}}}

\newcommand{\rd}{{\partial}}
\newcommand{\nab}{{\nabla}}

\newcommand{\caf}{{\mathcal F}}
\newcommand{\cai}{{\mathcal{I}}}
\newcommand{\can}{{\mathcal N}}

\newcommand{\cas}{{\mathcal S}}

\newcommand{\nor}[2]{\left\| {#1} \right\|_{#2}}
\newcommand{\inp}[2]{\langle {#1} , {#2} \rangle }
\newcommand{\jb}[1]{\langle {#1} \rangle{} }
\newcommand{\dist}{\operatorname{dist}}

\newcommand{\scr}{Schr\"{o}dinger }

\newcommand{\wta}{{\widetilde{A}}}
\newcommand{\wtd}{{\widetilde{D}}}
\newcommand{\wtf}{{\widetilde{F}}}


\makeatletter

  \@addtoreset{equation}{section}
\makeatother


\newtheorem{thm}{Theorem}[section]
\newtheorem{lem}{Lemma}[section]
\newtheorem{prop}{Proposition}[section]
\newtheorem{rmk}{Remark}[section]


\renewcommand{\vec}[1]{\bar{#1}}

\begin{document}

\title{Local well-posedness for 
the Landau-Lifshitz equation with helicity term
\footnote{
2010 Mathematics Subject Classification: 35Q55, 35Q60, 35A01, 35A02, 35B45.}
\footnote{
Keywords and phrases: Landau-Lifshitz equation, Helicity term, Well-posedness, Modified Schr\"odinger map equation, Magnetic potential}
}
\author{{\sc Ikkei Shimizu}}
\date{\hfill}
\maketitle

\begin{abstract}
We consider the initial value problem for the Landau-Lifshitz equation with helicity 
term (chiral interaction term), which arises from the Dzyaloshinskii-Moriya interaction. 
We prove that it is well-posed locally-in-time in the space $\vec k + H^s$ for $s\ge 3$ with $s\in\bbz$ and $\vec k= {}^t (0,0,1)$. We also show that 
if we further assume that the solution is homotopic to constant maps, then 
local well-posedness holds in the space $\vec k + H^s$ for $s >2$ with $s\in\R$. 
Our proof is based on the analysis via the modified Schr\"odinger map equation.
\end{abstract}


%



\section{Introduction}\label{Sc1}

We consider the initial value problem for 
the Landau-Lifshitz equation with helicity term:

\begin{equation}\label{a1}
\left\{
\begin{aligned}
\rd_t u &= u\times (-\Del u + b\,\nabla\times u)\quad \text{on } \R^2\times \R \\
u(x,0) &= u_0(x),
\end{aligned}
\right.
\end{equation}
where $u=u(x,t)$ is the unknown function from $\R^2\times \R$ to the sphere
\begin{equation*}
\bbs^2 = \left\{ y\in \R^3 : |y|=1  \right\} \subset \R^3,
\end{equation*}
$b\in \R$ is a constant, 
$\times$ denotes the vector product in $\R^3$, 
and the last term in the right hand side, called the \textit{helicity term} or \textit{chiral interaction term}, is defined by 
\begin{equation*}
\nabla \times u = 
\begin{pmatrix}
\rd_2 u_3\\
-\rd_1 u_3\\
\rd_1 u_2- \rd_2 u_1
\end{pmatrix}
.
\end{equation*}

In the physical context, 
(\ref{a1}) is considered as a mathematical model of the evolution of magnetization vectors in helimagnets, 
and the helicity term arises from the physical effect called the Dzyaloshinskii-Moriya interaction. For the detailed background of the equation, see for example 
\cite{LM,Mel} and the references therein. \bigskip\par
The equation (\ref{a1}) preserves the energy:
\begin{equation*}
\mathcal{E}(u) :=\int_{\R^2} \frac{1}{2} \left|\nabla u\right|^2 + b\, u\cdot (\nabla\times u) \, dx.
\end{equation*}
Another remarkable feature is that 
the first term of the left hand side of (\ref{a1}) does not contribute to the growth of 
$\nor{u-\vec k}{L^2}$, where $\vec k = {}^t(0,0,1)$. More precisely, we have the following identity:
\begin{equation}\label{a2}
\frac{d}{dt} \int_{\R^2} |u-\vec k|^2 dx = 2b \int_{\R^2} (u_1 \rd_1 u_3 + u_2 \rd_2 u_3) dx.
\end{equation}
In other words, the helicity term breaks the $L^2$-conservation and the spatial symmetry of the solutions. 
We note that the scaling symmetry is also destroyed by the helicity term.\bigskip\par
The corresponting energy-minimizing problem 
has been considered, for example, 
by 
\cite{DM,LM,Mel}, 
where some other additional terms are also taken into account such as easy axis anisotropy. 
The energy minimizers with nontrivial homotopy are called \textit{chiral skyrmion}, which has been attracting a lot of interest. 
On the other hand, 
the initial value problem has little been investigated so far, while D\"oring and Melcher \cite{DM} briefly mentions the local-in-time solvability in sufficient high regularities. 
The aim of the present paper is to show the local well-posedness for (\ref{a1}) in low regularities. \bigskip\par
In the case when $b=0$, in which (\ref{a1}) is especially called \textit{Schr\"odinger maps}, has been extensively studied from various aspects. 
The local well-posedness with large data is shown in \cite{M} (see also \cite{SSB}). 
For small data, the global well-posedness is proved in critical regularities by 
\cite{BIK,BIKT,IK,Sm}
(see also \cite{GKT2,S}). 
Some results on global-in-time regularity for large data can be seen in 
\cite{BIKT2,DS}. 
The asymptotic behavior of the solutions is also explored by 
\cite{BIKT,BT,GKT,GKT2,GK,GNT,MRR,P}. 
The Landau-Lifshitz equation which contains other terms 
arising from physical effects, such as easy axis or easy plane anisotropy, is also studied in \cite{dLG}, for example.\bigskip\par 
Our first main theorem is the local well-posedness for (\ref{a1}) with initial data in the Sobolev classes. We define, for function spaces $Y$, 
\begin{equation*}
\vec k + Y := \{ u: \R^2\to \bbs^2\,:\,u-\vec k \in Y \} ,\quad \vec k= {}^t (0,0,1).
\end{equation*}

\begin{thm}\label{T1}
(i) (Existence) Let $s\ge 3$ be an integer. Then, for $u_0\in \vec k+ H^s$, 
(\ref{a1}) has a (weak) solution $u\in L^\infty ([0,T]: \vec k +H^s)$, where $T=T(\nor{u_0-\vec k}{H^s},s)>0$ 
is a non-increasing function of $\nor{u_0-\vec k}{H^s}$. \\
(ii) (Uniqueness) For intervals $I\subset \R$ with $0\in I$, the solutions to (\ref{a1}) are unique in 
\begin{equation}\label{a0}
C(I:\vec k +L^2\cap L^\infty)
\cap L^\infty (I:\vec k+H^2).
\end{equation}
(iii) (Continuity) 
Let $s\ge 3$ be an integer, and let $\{ u_0^{(n)}\}_{n=1}^\infty$ be a bounded sequence in $\vec k + H^{s}$. 
Suppose that $u_0^{(n)} \to u_0$ in $\vec k+H^{s-\ep}$ for $\ep\in (0,s-1]$. 
Let $u^{(n)}, u$ be the solution in the class (\ref{a0}) with initial data $u^{(n)}_0, u_0$, respectively. 
Then, $u^{(n)} - u\to 0$ in $L^\infty_t H^{s-\ep}$.
\end{thm}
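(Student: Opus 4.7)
The plan is to use the modified Schrödinger map (MSM) reduction, which has been effective in the pure Schrödinger-map case $b=0$. For a candidate smooth solution $u$ of \eqref{a1} with $u-\vec k\in H^s$, I would choose a global orthonormal frame $(e,f)$ of $u^\perp\subset\R^3$ in the Coulomb gauge $\nabla\cdot A=0$, where $A_j=\partial_j e\cdot f$, and set
\begin{equation*}
q_j=\partial_j u\cdot e+i\,\partial_j u\cdot f,\qquad j=1,2.
\end{equation*}
A direct computation from \eqref{a1} then produces a magnetic Schrödinger system of the form
\begin{equation*}
(i\partial_t + D_k D_k)q_j = N_j(q,A)+b\,H_j(q,A,u),\qquad D_k=\partial_k-iA_k,
\end{equation*}
where $N_j$ collects the usual Schrödinger-map nonlinearities (cubic in $q$ plus curvature-$\times$-$q$), and $H_j$ is a new first-order term produced by the helicity.

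\textbf{Existence.} For part (i), I would close an a priori energy estimate on $\|q\|_{H^{s-1}}$ (which is equivalent to $\|u-\vec k\|_{H^s}$) by direct $L^2$ differentiation of the MSM. The Coulomb gauge yields elliptic estimates controlling $A$ and its derivatives in terms of $q$; combined with the $H^s$ algebra and Gagliardo--Nirenberg in two dimensions, all nonlinear terms close at $s\ge 3$ and one obtains
\begin{equation*}
\tfrac{d}{dt}\|q\|_{H^{s-1}}^2 \le C\bigl(1+\|q\|_{H^{s-1}}^2\bigr)^{C_s},
\end{equation*}
whence a lifespan depending only on $\|u_0-\vec k\|_{H^s}$. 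Existence of a weak solution then follows by parabolic Landau--Lifshitz--Gilbert regularization,
\begin{equation*}
\partial_t u^\varepsilon = \varepsilon(\Delta u^\varepsilon + |\nabla u^\varepsilon|^2 u^\varepsilon) + u^\varepsilon\times(-\Delta u^\varepsilon + b\nabla\times u^\varepsilon),
\end{equation*}
whose smooth solutions inherit the same a priori bound uniformly in $\varepsilon$; the limit $\varepsilon\to 0$ is taken by standard weak/strong compactness.

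\textbf{Uniqueness and continuity.} For part (ii), given two solutions $u^{(1)}, u^{(2)}$ in the class \eqref{a0}, set $w=u^{(1)}-u^{(2)}$. An $L^2$-type energy estimate on $w$ exploits the triple-product cancellations $w\cdot(w\times v)=0$ and $\partial_\ell w\cdot(u^{(1)}\times\partial_\ell w)=0$; after integration by parts, the apparent loss of a derivative in $u^{(1)}\times\Delta w$ is reduced to $\int\nabla w\cdot(w\times\nabla u^{(1)})\,dx$, which is controlled via Gagliardo--Nirenberg together with the uniform $H^2$ hypothesis (or, more cleanly, by passing through the MSM, where the difference equation is genuinely semilinear). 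The helicity contribution is strictly lower order and is treated identically. For part (iii), I would apply a Bona--Smith argument: mollify each $u_0^{(n)}$ at scale $\delta>0$ and project back onto $\bbs^2$; use the uniform $H^s$ bound of (i) on the smoothed approximants together with the weaker-norm continuous dependence of (ii); and interpolate to conclude $H^{s-\varepsilon}$ convergence.

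\textbf{Principal obstacle.} The main technical difficulty is verifying that the helicity perturbation $H_j$ does not spoil the MSM energy closure. Unlike the scaling-covariant nonlinearities $N_j$, this term is a genuine first-order magnetic potential that breaks scale invariance and $L^2$ conservation (cf.\ \eqref{a2}). Since $H_j$ is strictly lower order than the cubic Schrödinger nonlinearity, however, it should be absorbable at the subcritical level $s\ge 3$ via $H^s$-algebra estimates, and its symmetrization properties parallel those familiar from ordinary magnetic Schrödinger equations; the subtlety is checking that these absorptions are uniform along the parabolic approximation and compatible with the Bona--Smith limit.
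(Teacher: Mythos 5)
Your route (modified Schr\"odinger map with Coulomb gauge for the a priori bound, parabolic LLG regularization for existence, a direct difference energy for uniqueness, Bona--Smith for continuity) is not the paper's route, and it has genuine gaps at the level of Theorem \ref{T1}. The most serious one: the MSM/Coulomb-gauge reduction is simply not available here. Theorem \ref{T1} makes no topological assumption on $u_0$, whereas the construction of an orthonormal frame $(e,f)$ that decays to constants at spatial infinity with quantitative $H^k$/$L^1$ bounds (Lemma \ref{EL05}, Proposition \ref{EP1}) requires $u$ to be zero-homotopic; for data of nonzero degree no such frame exists, and this is exactly why the paper confines the frame method to Theorem \ref{T2} and proves Theorem \ref{T1} by McGahagan's \emph{hyperbolic} regularization $-\del^2 u\times\rd_{tt}u$ with covariant energy estimates on the map itself. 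Related to this, your ``principal obstacle'' paragraph mischaracterizes the helicity contribution to the MSM: in (\ref{g4}) it is $-b\sum_l D_m(u_l\psi_l)$, a genuine quadratic \emph{derivative} nonlinearity of the same order as the magnetic term, not a strictly lower-order term absorbable by $H^s$-algebra; the paper has to exploit its skew-adjoint structure (absorbing $-\tfrac b2 u_l$ into the magnetic potential $\wta_l$ and working with $\Ome_\wta$, or, in the Section \ref{Sc2} energy estimate, the integration by parts in $I_{55}$). Finally, the uniform-in-$\vep$ bound for your parabolic approximation is asserted rather than proved; the paper's choice of the wave-type regularization is precisely what allows the second-derivative terms to cancel in the uniform estimates.

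The uniqueness argument for part (ii) also has a gap. In the class (\ref{a0}) one only has $u\in L^\infty_t H^2$ in two dimensions, so $\nabla u\notin L^\infty$ and $\|w\|_{L^\infty}$ is not controlled by the $H^1$ difference norm. An extrinsic energy estimate for $w=u^{(1)}-u^{(2)}$ does not close: the $L^2$ estimate alone produces $\|\nabla w\|_{L^2}$ on the right, and in the $\dot H^1$ part one is left with terms such as $\inp{w\times\Del u^{(1)}}{\Del w}$ and $\inp{\nabla u^{(2)}\times\Del w}{\nabla w}$ in which $\Del w$ is merely bounded by the $H^2$ hypothesis, not small, so Gagliardo--Nirenberg yields at best a non-Lipschitz (Osgood-failing) differential inequality. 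This is exactly what the paper's geometric method avoids: the difference of gradients is measured via parallel transport along connecting geodesics ($V_m=X(1,0)\rd_m u^{(0)}-\rd_m u^{(1)}$), and even then the estimate only closes through a Yudovich-type argument ($\nor{f}{L^r}\le C\sqrt r\nor{f}{H^1}$, $p\to\infty$ in Proposition \ref{P3}), which also delivers part (iii) by interpolation without any Bona--Smith smoothing. Your fallback ``pass through the MSM, where the difference equation is semilinear'' again founders on the homotopy restriction (and on the need to compare gauges of two distinct solutions, which the paper only does in Section \ref{Sc6} under the hypotheses of Theorem \ref{T2}). So the overall architecture must either add the missing ingredients (frame obstruction, skew-adjoint cancellation, uniform regularization bounds, logarithmic/Yudovich uniqueness argument) or revert to the paper's scheme.
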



Next, we restrict our attention to the zero-homotopic solutions. 
Here we call a continuous map $f:\R^2\to \bbs^2$ \textit{zero-homotopic} if $f$ is homotopic to a constant map. 
Then we can extend the regularities of local well-posedness into $\vec k+H^s$ for $s>2$, $s\in\R$.

\begin{thm}\label{T2}
Let $s>2$ be a real number, and 
assume that $u_0\in \vec k+H^s$ is continuous and zero-homotopic. Then the followings hold. \\
(a) (Regularity) 
(\ref{a1}) has a  solution in the class 
$C_t (\vec k +H^s)$. \\
(b) (Blow-up criterion) 
Let $u\in C ([0,T_{\max}); \vec k +H^{s})$ be a solution to (\ref{a1}), where $T_{\max}$ is the maximal existence time in this class. 
Suppose $T_{\max}<\infty$. Then for any $\ep_0>0$, we have 
\begin{equation*}
\lim_{t\to T_{\max}-} \nor{u(t)-\vec k}{H^{2+\ep_0}} =\infty.
\end{equation*}
(c) (Bound) 
There exists $T=T(\nor{u_0 -\vec k}{H^s}, s)$ such that solutions $u$ to (\ref{a1}) satisfy
\begin{equation*}
\nor{u-\vec k}{L^\infty([0,T]: H^{s})} \le C(\nor{u_0 -\vec k}{H^s}, s).
\end{equation*}
(d) (Continuity) 
The solution map is continuous from $H^s$ to $C_t(\vec k+H^s)$. 
Furthermore, the following statement is true: 
Let $u^{(0)}$, $u^{(1)}$ 
be two solutions to (\ref{a1}) with 
$u^{(j)}|_{t=0} = u^{(j)}_0 \in \vec k +H^s$ for $j=0,1$. Then, 
there exists $T=T(M,s)$ such that 
\begin{equation}\label{ca1}
\nor{u^{(1)}- u^{(0)}}{L^\infty([0,T] : H^{s-1})} \le C(M ,s) \nor{u_0^{(1)} -u_0^{(0)}  }{H^{s-1}},
\end{equation}
where $M=\nor{u^{(0)}_0-\vec k}{H^s} + \nor{u^{(1)}_0-\vec k}{H^s}$.
\end{thm}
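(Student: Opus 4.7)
The plan is to reduce (\ref{a1}) to the modified Schr\"odinger map equation (MSM) via a global orthonormal frame on $u^{*}T\bbs^{2}$, whose existence is guaranteed precisely by the zero-homotopic assumption on $u$. In such a frame $\{e_1,e_2\}$, chosen so that $e_1 \to (1,0,0)$, $e_2\to (0,1,0)$ at spatial infinity, the differential of $u$ is encoded by the complex-valued vector field $q_k := \inp{\rd_k u}{e_1} + i \inp{\rd_k u}{e_2}$, which solves a magnetic Schr\"odinger equation
\[
(i\rd_t + \Del_A) q_k = \mathcal{N}_k(q, A, u),
\]
with connection coefficients $A_k := \inp{\rd_k e_1}{e_2}$. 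The helicity term $b\, \nab \times u$ contributes only a linear-in-$q$ magnetic-type perturbation with coefficient $b$, and so fits naturally into the MSM framework.

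First I would establish the a priori $H^{s-1}$ estimate on $q$: impose the Coulomb gauge $\dv A = 0$, so that $A$ is controlled quadratically by $q$ via an elliptic equation, and close an energy estimate on $q$ using Kato-Ponce commutator and fractional Leibniz bounds. This yields a time of existence $T = T(\nor{u_0-\vec{k}}{H^s}, s)$ and $\nor{q}{L^\infty_t H^{s-1}} \le C(\nor{u_0-\vec{k}}{H^s}, s)$, which translates into a bound on $\nor{u-\vec{k}}{L^\infty_t H^{s}}$ and thereby proves (c). A parallel calculation applied to the difference $w := u^{(1)}- u^{(0)}$, analyzed via its frame representation, produces a Schr\"odinger-type equation for $w$ whose source terms are bilinear with one factor linear in $w$ and the other in $L^\infty_t H^{s-1}$; Gronwall at the $H^{s-1}$ level for $w$ yields the Lipschitz estimate (\ref{ca1}) of (d), with one derivative lost because the coefficients of the difference equation contain a first derivative of $u^{(j)}$.

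For (a), approximate $u_0$ by $u_0^{(n)} \in \vec{k}+ H^N$ with large integer $N$, and apply Theorem \ref{T1} to obtain smooth solutions $u^{(n)}$. The MSM a priori bound gives uniform existence on some $[0,T]$ together with uniform $H^s$ bounds, while the Lipschitz estimate just established implies $\{u^{(n)}\}$ is Cauchy in $C([0,T]:\vec{k}+H^{s-1})$. The limit $u$ lies in $L^\infty_t(\vec{k}+H^s) \cap C_t(\vec{k}+H^{s-1})$, and a Bona-Smith argument using this continuity together with the uniform $H^s$ bound upgrades continuity to $C_t(\vec{k}+H^s)$. Part (b) follows by contradiction: if $\nor{u(t)-\vec{k}}{H^{2+\ep_0}}$ stayed bounded as $t\to T_{\max}^-$, iterating the MSM energy estimate at regularities $s'$ increasing from $2+\ep_0$ up to $s$ would keep $\nor{u-\vec{k}}{H^{s}}$ bounded on $[0,T_{\max})$, contradicting maximality by restarting the equation near $T_{\max}$.

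The principal obstacle is the interplay between the frame construction and the non-integer regularity $s > 2$: the frame must be built so that $(e_j, A, q)$ depend continuously on $u$ in the $H^s$ topology, in order for the MSM reduction to be stable under perturbations of initial data and to yield the continuity asserted in (d). A related technical point is controlling the MSM nonlinearity (cubic in $q$ after imposing the Coulomb gauge, plus the linear helicity correction) by fractional commutator estimates in $H^{s-1}$; fortunately this is the appropriate regime because $s-1 > 1$ makes $H^{s-1}$ a Banach algebra. The constant $b$ enters only through a linear magnetic-type term, so its presence merely rescales the constants in the energy inequality and does not alter the structure of the argument.
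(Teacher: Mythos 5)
Your overall architecture --- frame/Coulomb-gauge reduction to the modified Schr\"odinger map system, an a priori $H^{s-1}$ bound on the differentiated field, smooth approximation plus a Bona--Smith argument for (a) and for continuity of the flow map --- is indeed the route the paper takes. The first genuine gap is your treatment of the helicity term. In the frame variables its contribution to (\ref{g4}) is $-b\sum_l D_m(u_l\psi_l)$, a \emph{first-order derivative} nonlinearity, not a perturbation that ``merely rescales the constants.'' A plain Kato--Ponce/Gronwall estimate on $\nor{\psi}{H^{s-1}}$ does not close as you describe: after distributing $s-1$ derivatives, the top-order piece $u_l\,\rd_m\Lambda^{s-1}\psi_l$ tested against $\Lambda^{s-1}\psi_m$ exhibits no cancellation until one first uses the compatibility relation (\ref{g1}), $D_m\psi_l=D_l\psi_m$, to convert it into $u_l D_l\psi_m$, and then exploits skew-adjointness. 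The paper implements precisely this by absorbing $-\tfrac b2 u_l$ into the connection, i.e.\ working with $\wtd_l=\rd_l+i\wta_l$, $\wta_l=A_l-\tfrac b2 u_l$, rewriting the equation as (\ref{ab1}) with skew-adjoint principal part $-i\Del_\wta$, and running the energy estimate in the magnetic norms $\nor{\Ome_\wta^{\ep}\rd^{\al}\psi_m}{L^2}$; because $\wta$ depends on the solution, this in turn requires the Wada-type commutator bound for $[\Ome_\wta^{-\ep},D_0]$ (Lemma \ref{La1} (iv)), which is exactly where the restriction $s>2$ enters. This absorption (or an equivalent symmetrization via (\ref{g1})) is the central new point of the paper and must appear explicitly in any correct version of your energy step.

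The second gap is the Lipschitz bound (\ref{ca1}). A Gronwall estimate on $w=u^{(1)}-u^{(0)}$ at the $H^{s-1}$ level loses derivatives: the difference equation contains the quasilinear term $u^{(1)}\times\Del w$, and after applying $s-1$ derivatives and integrating by parts one is left with terms of the schematic form $\int \rd_j u^{(1)}\times\rd_j\Lambda^{s-1}w\cdot\Lambda^{s-1}w\,dx$, involving $s$ derivatives of $w$, which cannot be closed in $H^{s-1}$; moreover ``the frame representation of $w$'' is not a well-defined object, since the two solutions carry different frames and their fields live in different gauges. The paper sidesteps this (following \cite{BIKT}) by joining the data through $u_0^{(h)}=\Pi((1-h)u_0^{(0)}+hu_0^{(1)})$, treating $h$ as an additional variable, and estimating the differentiated field $\psi_3$ in the $h$-direction, which satisfies the same magnetic equation (\ref{ab1}) and hence the same loss-free $H^{s-1}$ bound; integrating in $h$ and using the quantitative equivalence of $\nor{\psi_3}{H^{s-1}}$ with $\nor{\rd_h u^{(h)}}{H^{s-1}}$ (Proposition \ref{Pad1}) yields (\ref{ca1}). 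Once (\ref{ca1}) is available, your outline for (a), for continuity of the solution map via mollified data, and the iteration argument for (b) do match the paper's Steps 1--3 of Section \ref{Sc5} and Section \ref{Sc6}.
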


\begin{rmk}
(i) Theorems \ref{T1} and \ref{T2} also hold true for negative direction in time. 
Indeed, if we consider the transform 
\begin{equation*}
\tilde{u}(t,x_1, x_2) := 
\begin{pmatrix}
u_2(-t,x_2,x_1)\\
u_1(-t,x_2,x_1)\\
u_3(-t,x_2,x_1)
\end{pmatrix}
,
\end{equation*}
$\tilde{u}$ again satisfies (\ref{a1}) by switching the sign of $b$. \\
(ii) We note that Theorem \ref{T2} allows $s$ to be non-integer, while Theorem \ref{T1} is restricted to integer exponents. 
\end{rmk}

%
%
We briefly mention the strategy of the proof. 
For the proof of Theorem \ref{T1} (i), we employ hyperbolic-type regularization 
invented by McGahagan \cite{M}. In detail, we add second time-derivative term $\del^2 u\times \rd_{tt} u$ to the equation, and then take the limit $\del\to 0$, which yields a solution. The advantage of this argument compared with the parabolic regularization is that 
we can cancel the second derivative term $\Del u$ in the process of obtaining uniform bounds for perturbed solutions in $\del$, 
which results in the reduction of regularities in the estimate 
(see the estimate for $I_2$ in Section \ref{Sc2}, while its proof is referred to \cite{M}). 
We provide a simplified version of proof from the original argument, where the $\delta$-dependence of the maximal existence time of solutions to (\ref{b1}) is obtained by a standard scaling argument. \par
For the proof of Theorem \ref{T1} (ii), (iii), we follow the \textit{geometric} energy method 
due to McGahagan \cite{M}. More precisely, 
we introduce a geodesics connecting two solutions, and the difference of the derivatives between two solutions is measured in the same tangent space via parallel transport. 
Our argument in the present paper is rather based on the Yudovich argument \cite{Y} 
than the original paper, 
and it has already been applied in the case when $b=0$ by the author \cite{S2}.\par 
For the proof of Theorem \ref{T2}, we introduce the method using orthonormal frames, which was first applied to the study of Schr\"odinger maps in \cite{CSU}. 
The remarkable advantage is that the problem (\ref{a1}) is reduced to 
a system of semi-linear Schr\"odinger equations, called \textit{modified Schr\"odinger equation} (see (\ref{g4})). 
A remarkable difficulty here is that  in (\ref{g4}), the helicity term appears as quadratic derivative nonlinearities, which are known to be difficult to control as a perturbation of the free Schr\"odinger equation. 
To overcome that, we exploit a \textit{skew-adjoint} structure of helicity term. 
More precisely, the bad part in the nonlinearity which contains derivative losses can be absorbed into the magnetic terms, and then we can cancel them in the energy method based on the Sobolev spaces 
associated with new magnetic laplacian. 
Since the potential depends on the solutions, 
we need to estimate the commutator between time differentiation and the associated differential operators, which forces us to assume the regularity of solutions bigger than $2$. 
Our argument is reminiscent of the study of Maxwell-Schr\"odinger system by \cite{W}. \par 
We make here some remarks on the method of orthonormal frames. 
As long as the author knows, the present work is the first study of Landau-Lifshitz equation using the modified Schr\"odinger map equation in the case when 
other additional terms than Schr\"odinger map term exist in the equation. 
We note that the zero-homotopic condition is required in the construction of orthonormal frames with certain decay at spatial infinity, otherwise we have no quantitative bound for the frames. 
We also note that the modified Schr\"odinger maps naturally induces magnetic potentials, arising from the geometry of the target, and the indefiniteness of orthonormal frames provides a gauge symmetry of equations. 
We adopt here the \textit{Coulomb gauge} condition, 
which gives a simple relation between differentiated fields and connection coefficients. 
A further discussion of this gauge can be seen in Remark \ref{rm1}.
\bigskip\par
The organization of the present paper is as follows. 
Section \ref{Sc2} is devoted to the construction of solutions (Theorem \ref{T1} (i)). 
We discuss the uniqueness and the continuity of solutions with respect to the initial date in Section \ref{Sc3} (Theorem \ref{T1} (ii) and (iii)). 
In Section \ref{Sc4}, we construct the orthonormal frame and derive the properties on them. 
The nonlinear analysis for modified Schr\"odinger map equation is summarized in  Section \ref{Sc5}, which concludes Theorem \ref{T2} (a), (b) and (c). 
Section \ref{Sc6} is devoted to the proof of Theorem \ref{T2} (d).
\bigskip\par
Here is the summary of notations. 
For a Banach space $X$ and a function $f:[0,T]\to X$ with $T>0$, 
we define 
$
\nor{f}{L^p_T X} := ( \int_0^T \nor{f(t)}{X} p dt )^{1/p}
$. 
We also define $C_tX$ as the space of all continuous function from some interval $I$ to $X$, where the interval will vary in each situations. 
We often put the symbol of variables in the right bottom of the space, like $L^p_x$, in order to make clear which variable the integration is curried out in. 
For $s\in\R$, we write the maximal integer less than or equal to $s$ as $\lfloor s \rfloor$. 
We write $\rd_m$ for the differentiation with respect to $x_m$ for 
$m=1,2$. 
We shall consider the time variable as $0$-th variable, thus for example, 
$\rd_0$ stands for the differentiation with respect to $t$. 
%
We use $C$ for representing a constant, whose value varies in each situations. 
If we make clear that $C$ depends on some quantity $\sig$, 
we write it as $C_\sig$ or $C(\sig)$. 
The set of all Schwartz function from $\R^2$ to $\C$ is denoted by $\cas(\R^2)$. 
We define the Fourier transform by $\caf [f] (\xi):= \int_{\R^2} e^{-ix\cdot \xi} f(x) dx$, and the inverse Fourier transform by $\caf^{-1} [g] (x) := 
\int_{\R^2} e^{ix\cdot \xi} g(\xi) d\xi$. 
Let $|\nab|^\sig$ denote the operator defined by $|\nab|^\sig f:=  (2\pi)^{-2} \caf^{-1} |\cdot |^\sig \caf f$ for $\sig \in\R$. 
Let $H^s_r= H^s_r(\R^2)$ be the Sobolev space 
$
H^s_r := \{ f:\R^2\to \C\, |\, \nor{f}{H^s_r} := \nor{(2\pi)^{-2} \caf^{-1} (1+|\cdot|^2)^{s/2} \caf f}{L^r} <\infty \}. 
$
When $r=2$, we especially write it as $H^s$. 


%
%
%
%

\section{Existence of weak solutions}\label{Sc2}

In this section, we show the existence of weak solutions to (\ref{a1}), which is stated in Theorem \ref{T1} (i). 
We follow the hyperbolic regularizing argument of McGahagan \cite{M}, while we make some technical modifications to the original one. 
We first consider the perturbed equation:
\begin{equation}\label{b1}
-\del^2 u\times \rd_{tt} u + \rd_t u= u\times (-\Del u + b\nab\times u)
\end{equation}
with $\del>0$. (\ref{b1}) determines the evolution of maps from $\R^2$ to $\bbs^2$. 
Now we show that the initial-value problem for (\ref{b1}) is locally well-posed by using standard contraction argument. 

\begin{prop}\label{P1}
Let $s\ge 3$ be an integer, and let $(u_0, v_0)\in (\vec k + H^s) \times H^{s-1}$. 
Then there exists a unique solution $u$ with the regularity $u-\vec k\in C([0,T]: H^s)\cap C^1([0,T] : H^{s-1})$ 
for some $T=T(\nor{u_0}{H^s}, \del \nor{v_0}{H^{s-1}})$
, where $T(\cdot, \cdot)$ is a positive, non-increasing function with respect to both variables and independent of $\del$. 
\end{prop}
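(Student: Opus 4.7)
The plan is to recast (\ref{b1}) as an explicit semi-linear wave equation via the sphere constraint, run a Banach contraction in a weighted Sobolev space, and use a scaling argument to extract the correct $\del$-dependence of the lifespan. Using $|u|\equiv 1$ together with its differential consequences $u\cdot\partial_t u\equiv 0$ and $u\cdot\partial_{tt}u\equiv -|\partial_t u|^2$, and taking $u\times$ of (\ref{b1}) with the identity $u\times(u\times A)=(u\cdot A)u-A$, one obtains the equivalent form
\begin{equation*}
\del^2\partial_{tt}u-\Delta u = -b\,\nabla\times u - u\times\partial_t u + (|\nabla u|^2+b\,u\cdot\nabla\times u)\,u - \del^2|\partial_t u|^2 u,
\end{equation*}
which is a genuinely semi-linear wave equation in $(u,\partial_t u,\nabla u)$.

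Introducing the weighted velocity $w:=\del\partial_t u$, one has $\nor{w(0)}{H^{s-1}}=\del\nor{v_0}{H^{s-1}}$, so the initial ``wave energy'' $\nor{u_0-\vec k}{H^s}^2+\nor{w(0)}{H^{s-1}}^2$ is controlled exactly by the two quantities appearing in the lifespan. I would set up a Picard iteration in
\begin{equation*}
X_{T,R}:=\bigl\{\,u:\,u-\vec k\in C([0,T];H^s)\cap C^1([0,T];H^{s-1}),\;\nor{u-\vec k}{L^\infty_T H^s}+\nor{w}{L^\infty_T H^{s-1}}\le R\,\bigr\},
\end{equation*}
where $\Phi(\bar u)$ denotes the solution of the linear wave equation $\del^2\partial_{tt}u-\Delta u=\mathrm{RHS}(\bar u)$ with initial data $(u_0,v_0)$. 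Standard energy estimates for the wave operator $\del^2\partial_{tt}-\Delta$, combined with Moser/Kato--Ponce tame product estimates in $H^{s-1}(\R^2)$ (valid for $s\ge 3$ by the 2D embedding $H^2\hookrightarrow L^\infty$), together with the pointwise identities $u\cdot w\equiv 0$ and $w\cdot(u\times w)\equiv 0$, show that $\Phi$ maps $X_{T,R}$ to itself and is a contraction for $R$ comparable to the initial wave energy and $T$ sufficiently small. Preservation of $|u|\equiv 1$ is verified separately by a Gr\"onwall argument on $|u|^2-1$ derived from the wave equation, using the compatibility of the data on $\bbs^2$.

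To reduce the $\del$-dependence of the lifespan to the stated form, I would invoke the scaling symmetry of (\ref{b1}): under $u(t,x)\mapsto u(\lambda^2 t,\lambda x)$, the equation is transformed into one of the same form with $\tilde\del=\del/\lambda$ and modified coefficient $\tilde b=b\lambda$. Choosing $\lambda=\del$ normalises to $\tilde\del=1$, and tracking the compatible rescaling of the initial data identifies the invariant combination $\del\nor{v_0}{H^{s-1}}$; this yields $T=T(\nor{u_0}{H^s},\del\nor{v_0}{H^{s-1}})$, non-increasing in both variables. The main obstacle I expect is closing the energy estimate uniformly in $\del$: the $-u\times\partial_t u$ term becomes $-\del^{-1}u\times w$ in the $(u,w)$ system, and the commutator $[\partial^\alpha,u\times]w$ naively carries a $\del^{-1}$ factor that must be absorbed either algebraically---via the skew-adjointness of $u\times$ combined with $u\cdot w\equiv 0$---or by passing through the scaling reduction.
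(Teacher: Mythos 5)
Your reduction of (\ref{b1}) to a semilinear wave equation via $|u|\equiv 1$, $u\cdot\rd_t u\equiv 0$ and $u\times(u\times A)=(u\cdot A)u-A$ is exactly the paper's starting point, and the Banach fixed point for the pair $(u-\vec k,\rd_t u)$ in $C([0,T]:H^s)\times C([0,T]:H^{s-1})$ is also the paper's device. The genuine divergence --- and the gap --- lies in how the $\del$-dependence of $T$ is extracted. The paper rescales \emph{time only}, $U(x,t):=u(x,\del t)$, which leaves every spatial norm of the data unchanged, turns (\ref{b1}) into $\rd_{tt}U=\Del U+F_\del(U,\rd_t U)$ whose single singular term is $\del^{-1}U\times\rd_t U$, and then closes a completely standard contraction by taking the existence time in the rescaled variable proportional to $\del$, namely $T\sim C_0\,\del\,(\nor{u_0-\vec k}{H^s}+\del\nor{v_0}{H^{s-1}}+1)^{-2}$; the data enter only through $\nor{u_0-\vec k}{H^s}$ and $\nor{\rd_t U(0)}{H^{s-1}}=\del\nor{v_0}{H^{s-1}}$, which is how the dependence on these two quantities arises. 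No uniformity in $\del$ is attempted or needed at this stage: the uniform bounds are the business of Proposition \ref{P2}.

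Neither of your two proposed mechanisms delivers this. First, the parabolic rescaling $u\mapsto u(\lambda^2 t,\lambda x)$ with $\lambda=\del$ is computed correctly ($\tilde\del=1$, $\tilde b=b\del$), but in two space dimensions the rescaled datum satisfies $\nor{u_0(\del\,\cdot)-\vec k}{L^2}=\del^{-1}\nor{u_0-\vec k}{L^2}$, so the normalized problem's data are \emph{not} controlled by $\nor{u_0-\vec k}{H^s}$, and undoing the scaling shrinks the time interval by the factor $\del^{2}$; hence ``tracking the rescaling'' does not single out the combination $\del\nor{v_0}{H^{s-1}}$, and the stated form of $T$ does not follow from this route as sketched. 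Second, the skew-adjointness of $u\times\cdot$ together with $u\cdot w\equiv 0$ cannot be used to tame the $\del^{-1}u\times w$ term inside a Picard iteration: the iterates and their differences are not sphere-valued and do not satisfy these orthogonality relations, and even for a priori $H^{s-1}$ energy estimates the commutators $[\rd^\al,u\times]w$ still carry the factor $\del^{-1}$. The cancellation you have in mind is precisely the covariant one exploited in the proof of Proposition \ref{P2} (the treatment of $I_2$ and $I_{55}$ there), not in Proposition \ref{P1}. If you replace your scaling step by the paper's pure time rescaling --- equivalently, if you accept a contraction time proportional to $\del$ in the rescaled variable, with the data entering only through $\nor{u_0-\vec k}{H^s}$ and $\del\nor{v_0}{H^{s-1}}$ --- the rest of your fixed-point argument closes and coincides with the paper's proof.
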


\begin{proof}
We change the scaling of functions as follows:
\begin{equation*}
U(x,t) := u(x, \del t).
\end{equation*}
We also write $U_0(x,t) := u_0(x, \del t)$ and $V_0:= v_0(x,\del t)$. 
Then (\ref{b1}) can be rewritten as
\begin{equation*}
\rd_{tt} U = \Del U + F_\del (U,\rd_t U),
\end{equation*}
where
\begin{equation*}
F_\del (U,V) :=  \del^{-1} U\times V + |\nab U|^2 U - |V|^2 U 
- b \sum_{j=1}^2 U_j \cdot U\times \rd_j U.
\end{equation*}
By the Duhamel formula, $(U,V)=(U,\rd_t U)$ can be written as
\begin{equation*}
\begin{aligned}
U= \Phi_U (U,V) :=&\cos (\sqrt{-\Del}t) (U_0 - \vec k) + \frac{\sin (\sqrt{-\Del} t)}{\sqrt{-\Del}} V_0 
+\vec k\\
&- \int_0^t \frac{\sin (\sqrt{-\Del} (t-\tau) )}{\sqrt{-\Del}} F(U,V) \,d\tau,\\
\end{aligned}
\end{equation*}
\begin{equation*}
\begin{aligned}
V= \Phi_V (U,V) :=&- \sqrt{-\Del} \sin (\sqrt{-\Del} t) (U_0 -\vec k) + 
\cos (\sqrt{-\Del} t)  V_0 \\
&- \int_0^t \cos (\sqrt{-\Del} (t-\tau) ) F(U,V) \,d\tau.\\
\end{aligned}
\end{equation*}
For $T,M>0$, we set the complete metric space
\begin{equation*}
\begin{aligned}
W_{T,M} := &\left\{ 
(U,V)\in C([0,T]: \vec k +H^s)\times C([0,T]: H^{s-1}) \ |\ \right. \\
&\hspace{80pt}
\left. \nor{U-U_0}{L^\infty_T H^s} + \nor{V}{L^\infty_T H^{s-1}}  \le M\right\}
\end{aligned}
\end{equation*}
with the metric
\begin{equation*}
d_{W_{T,M}} ((U,V), (\tilde{U},\tilde{V})) := \nor{U-\tilde{U}}{L^\infty_T H^{s}_x} 
+ \nor{V-\tilde{V}}{L^\infty_T H^{s-1}}.
\end{equation*}
Then by the standard argument, we can show that the map $\Phi:= (\Phi_U, \Phi_V)$ is a contraction on $W_{T,M}$ by setting
\begin{equation*}
T := \frac{C_0 \del}{\left( \nor{U_0-\vec k}{H^s} + \nor{V_0}{H^{s-1}} +1 \right)^2},\quad 
M:= C_1 \left( \nor{U_0 -\vec k}{H^s} + \nor{V_0}{H^{s-1}} \right),
\end{equation*}
for some proper choice of $C_0, C_1>0$, which concludes the existence part of the lemma. The uniqueness part can be shown in the similar way.\qed
\end{proof}

\begin{prop}\label{P2}
Let $s\ge 3$ be an integer, and let $u:\R^2\times [0,T]\to \R^3$ be a solution to (\ref{b1}), where $T$ is as in Proposition \ref{P1}. Then there exists $\tilde{T} = \tilde{T} (\nor{u_0-\vec k}{H^s}, 
\del \nor{v_0}{H^{s-1}}, \nor{v_0}{H^{s-2}})>0$ and 
$C = C (\nor{u_0-\vec k}{H^s}, 
\del \nor{v_0}{H^{s-1}}, \nor{v_0}{H^{s-2}})>0$
such that 
\begin{equation*}
\nor{\rd_t u}{L^\infty_{\tilde{T} } H^{s-2}_x} \le C. 
\end{equation*}
\end{prop}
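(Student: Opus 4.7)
My plan is an $L^2$-type energy estimate on the spatial derivatives of $w := \partial_t u$ up to order $s-2$. For each multi-index $\alpha$ with $|\alpha| \le s-2$, I would apply $\partial^\alpha$ to (\ref{b1}) and take the $L^2$ inner product with $\partial^\alpha w$, which yields
\[
\|\partial^\alpha w\|_{L^2}^2 = \int \partial^\alpha\bigl[u\times(-\Delta u + b\nabla\times u)\bigr] \cdot \partial^\alpha w \,dx + \delta^2 \int \partial^\alpha(u \times \partial_{tt} u) \cdot \partial^\alpha w \,dx.
\]
The first integral is bounded by $C(\|u-\vec k\|_{H^s})\,\|\partial^\alpha w\|_{L^2}$ via tame Moser-type product estimates, using the uniform control on $\|u\|_{H^s}$ that comes from Proposition \ref{P1}.

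For the $\delta^2$-integral, the essential tool is the pointwise identity
\[
\delta^2 \partial_{tt} u = -\delta^2 |w|^2 u - u\times w + \bigl[|\nabla u|^2 + b\,u\cdot(\nabla\times u)\bigr] u + \Delta u - b\nabla\times u,
\]
obtained by taking the cross product of (\ref{b1}) with $u$ and using $|u|=1$, together with $u\cdot\partial_{tt} u = -|w|^2$ and $u\cdot\Delta u = -|\nabla u|^2$. This identity eliminates the nominal $\delta^{-2}$-singularity of $\partial_{tt} u$ and expresses $\delta^2\partial_{tt} u$ purely in terms of $u$, $w$, and the first two spatial derivatives of $u$. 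After substitution, the only top-order contribution is of the form $\int \partial^\alpha(u\times \Delta u)\cdot \partial^\alpha w\,dx$, which after one integration by parts in $x$ and the antisymmetry $(u\times X)\cdot X = 0$ reduces to $\int \nabla\partial^\alpha u \cdot (\nabla u \times \partial^\alpha w)\,dx$ modulo lower-order commutators, and is thus bounded by $C(\|u\|_{H^s})\,\|\partial^\alpha w\|_{L^2}$. The $b\nabla\times u$ term is treated analogously, while the algebraic pieces $u\times w$, $|w|^2 u$, $[\cdots]u$ produce contributions of the same form via tame product estimates.

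Summing over $|\alpha|\le s-2$ and invoking a Gronwall-type argument for $\|w(t)\|_{H^{s-2}}^2$ then gives
\[
\|w(t)\|_{H^{s-2}} \le C\bigl(\|u_0-\vec k\|_{H^s},\,\delta\|v_0\|_{H^{s-1}},\,\|v_0\|_{H^{s-2}}\bigr)
\]
on a subinterval $[0,\tilde T]\subseteq [0,T]$: $\delta\|v_0\|_{H^{s-1}}$ enters through the time $T$ from Proposition \ref{P1}, and $\|v_0\|_{H^{s-2}}$ through the initial value $\|w(0)\|_{H^{s-2}}=\|v_0\|_{H^{s-2}}$. The main obstacle I anticipate is the careful tracking at $|\alpha|=s-2$: the commutator $[\partial^\alpha,u\times]\partial_{tt} u$ nominally involves $s-1$ spatial derivatives of $\partial_{tt} u$, and one must substitute the pointwise identity above to trade derivatives on $\partial_{tt} u$ for derivatives on $u$ and $w$, which is precisely the ``cancellation of $\Delta u$'' referred to in the introduction; the detailed verification of this step is what the paper defers to \cite{M}.
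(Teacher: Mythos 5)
There is a fatal circularity at the heart of your plan. The pointwise identity you propose for $\del^2 \rd_{tt}u$ is nothing but the equation (\ref{b1}) crossed with $u$: if you take its vector product with $u$ again, the terms parallel to $u$ drop, $u\times(u\times w)=-w$, and you recover exactly $\del^2\, u\times \rd_{tt}u = w + u\times\Del u - b\, u\times(\nab\times u)$, i.e.\ (\ref{b1}) rearranged. Substituting this into your starting relation
$\nor{\rd^\al w}{L^2}^2 = \int \rd^\al\bigl[u\times(-\Del u + b\nab\times u)\bigr]\cdot\rd^\al w\,dx + \del^2\int\rd^\al(u\times\rd_{tt}u)\cdot\rd^\al w\,dx$
therefore produces, on the right, the term $\int|\rd^\al w|^2dx$ (from the $u\times w$ piece, which you list as a harmless ``algebraic piece'' but which is exactly of the size of the left-hand side, with no small factor to absorb it) plus $\int\rd^\al\bigl[u\times\Del u - b\,u\times(\nab\times u)\bigr]\cdot\rd^\al w\,dx$, which cancels identically against your first integral. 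The whole manipulation collapses to $\nor{\rd^\al w}{L^2}^2=\nor{\rd^\al w}{L^2}^2$ and yields no estimate; note also that your identity contains no time derivative of any energy, so there is nothing on which to run a Gronwall argument. If instead you keep the $\del^2\rd_{tt}u$ term and try to bound it directly, Proposition \ref{P1} only gives $\del^2\nor{\rd_{tt}u}{H^{s-2}}\lesssim 1+\nor{\rd_t u}{H^{s-2}}$ (the rescaled source $F_\del$ carries a $\del^{-1}$), so again the dangerous term enters with an $O(1)$ coefficient and cannot be absorbed.

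The paper's proof avoids this by never pairing the equation with $\rd^\al w$ at fixed time. It works with covariant derivatives $D^\gam$ and tests the differentiated equation against $J D^\gam D_t\rd_t u$: the singular term $\del^2\inp{D^\gam D_t\rd_t u}{J D^\gam D_t\rd_t u}$ vanishes by the pointwise orthogonality $(u\times X)\cdot X=0$, the term $\inp{JD^\gam\rd_t u}{JD^\gam D_t\rd_t u}$ produces $\tfrac12\rd_t\nor{D^\gam\rd_t u}{L^2}^2$ modulo commutators $[D^\gam,D_t]$, and the Schr\"odinger-map and helicity terms are integrated by parts in time, giving boundary terms $A_1,A_2$ (absorbed with an $\ep$) and time integrals carrying a factor $t_1$ that provides the smallness needed to close a bootstrap in $\nor{\rd_t u}{L^\infty_{t_1}H^{s-2}}$; the skew-adjoint structure of the helicity term is used once more in $I_{55}$. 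None of these mechanisms (orthogonality against the second time derivative, integration by parts in $t$, smallness from the time interval) is present in your proposal, and without them the uniform-in-$\del$ bound cannot be extracted from the equation by purely algebraic substitution.
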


\begin{proof}
We first prepare some notations. We set
\begin{equation*}
\mathcal{I}_k := \{ 1,2\}^k, \quad \text{and} \quad |\gam| := k \quad\text{for } \gam \in \mathcal{I}_k.
\end{equation*}
Let $t_1\in [0,T]$. 
For vector fields $X=X(x,t)\in T_{u(x,t)}\bbs^2$, 
the covariant derivative with respect to $\al$-th index is denoted by 
\begin{equation*}
D_j  X := \rd_\al X + (X \cdot \rd_\al u) u \quad \text{for } \al=0,1,2.
\end{equation*}
For $k\in\bbn\cup \{ 0\}$, we write
\begin{equation*}
D^\gam X := D_{\gam_1} \cdots D_{\gam_k} X \quad \text{for } \gam=(\gam_1,\cdots \gam_k)\in\cai_k.
\end{equation*}
Then the following inequalities hold true for $k\in\bbn$:
\begin{equation}\label{b2}
\nor{X}{H^{k}_x} \le C_k \sum_{\substack{\gam\in \cai_{k'}\\ 0\le k'\le k}} \nor{D^\gam X}{L^2_x} 
P_k \left( \sum_{\substack{\gam\in \cai_{k'}\\ 0\le k'\le k}} \nor{D^\gam X}{L^2_x} \right),
\end{equation}
\begin{equation}\label{b3}
\sum_{\substack{\gam\in \cai_{k'}\\ 0\le k'\le k}} \nor{D^\gam X}{L^2_x}  \le C_k \nor{X}{H^k_x} 
P_k (\nor{X}{H^k_x}),
\end{equation}
where $P_k$ is some polynomial depending only on $k$, and 
$C_k$ is a constant independent of $X$ (see \cite{M} for the detailed proof). 
Thus it suffices to show
\begin{equation}\label{b7}
\nor{D^\gam \rd_t u}{L^\infty_{t_1} L^2_x} \le 
C \left( 1 
+ t_1 \nor{\rd_t u}{L^\infty_{t_1} H^{s-2}_x} 
\right)
\end{equation}
for $\gam\in \cai_{k}$, $0\le k\le s-2$, 
which completes the bootstrap with respect to $\nor{\rd_t u}{L^\infty_{t_1}H^{s-2}_x}$.\bigskip\par
We set $J:= u\times \cdot$. 
Then from (\ref{b1}), we have
\begin{equation*}
\del^2 D^\gam D_t \rd_t u + J D^\gam \rd_t u = \sum_{m=1}^2 D^\gam D_m \rd_m u 
- b D^\gam (u_1 J\rd_1 u + u_2 J \rd_2 u).
\end{equation*}
If we take $L^2_x$-inner product with $JD^\gam D_t\rd_t u$, by orthogonality we have
\begin{equation*}
\begin{aligned}
\inp{D^\gam \rd_t u}{D^\gam D_t \rd_t u}_{L^2_x} 
= & \sum_{m=1}^2 \inp{D^\gam D_m \rd_m u }{ JD^\gam D_t \rd_t u}_{L^2_x} \\
&\hspace{17pt} -b \inp{D^\gam (u_1J\rd_1 u + u_2 J\rd_2 u)}{JD^\gam D_t \rd_t u}_{L^2_x},
\end{aligned}
\end{equation*}
which is equivalent to
\begin{equation*}
\frac12 \rd_t \nor{D^\gam \rd_t u}{L^2_x}^2 
= \rd_t (A_1+A_2) + \sum_{l=1}^5 I_l,
\end{equation*}
where
\begin{equation*}
A_1:= \sum_{m=1}^2 \inp{D^\gam D_m \rd_m u }{J D^\gam \rd_t u }_{L^2_x} ,
\end{equation*}
\begin{equation*}
A_2:= -b  \sum_{j=1}^2 \inp{D^\gam (u_j J\rd_j u )}{J D^\gam \rd_t u }_{L^2_x} ,
\end{equation*}
\begin{equation*}
I_1= - \inp{D^\gam \rd_t u}{[D^\gam, D_t] \rd_t u}_{L^2_x},
\end{equation*}
\begin{equation*}
I_2 = - \sum_{m=1}^2 \inp{D^\gam D_m \rd_m u}{J[D^\gam, D_t]\rd_t u}_{L^2_x},
\end{equation*}
\begin{equation*}
I_3 = - \sum_{m=1}^2 \inp{D_t D^\gam D_m \rd_m u}{JD^\gam \rd_t u}_{L^2_x},
\end{equation*}
\begin{equation*}
I_4 = -b \sum_{j=1}^2 \inp{D^\gam (u_j \rd_j u)}{J[D^\gam, D_t]\rd_t u}_{L^2_x},
\end{equation*}
\begin{equation*}
I_5 = b \sum_{j=1}^2 \inp{D_t D^\gam (u_j \rd_j u)}{JD^\gam \rd_t u}_{L^2_x}.
\end{equation*}
Thus we have
\begin{equation}\label{b4}
\nor{D^\gam \rd_t u}{L^\infty_{t_1} L^2_x}^2 \le 
\left. \nor{D^\gam \rd_t u}{L^2_x}^2 \right|_{t=0} 
+ 
\left. 2 (|A_1| +|A_2|) \right|_{t=0}^{t=t_1}
+
C \sum_{l=1}^5 \int_0^{t_1} |I_l| dt. 
\end{equation}
Let $\ep>0$ be a number determined later. 
By using the argument in \cite{M}, we have
\begin{equation*}
\left. |A_1| \right|_{t=0}^{t=t_1}
\le \ep \nor{D^\gam \rd_t u}{L^\infty_{t_1}L^2_x}^2 
+C (\ep, \nor{v_0}{H^{s-1}} , \nor{u-\vec k}{L^\infty_T H^s_x}), 
\end{equation*}
\begin{equation*}
\int_0^{t_1} ( |I_1| + |I_2| +|I_3| ) dt
\le 
C( \nor{u-\vec k}{L^\infty_{T} H^s_x})\, t_1 \nor{\rd_t u}{L^\infty_{t_1} H^{s-1}_x}^2 
P(\nor{\rd_t u}{L^\infty_{t_1} H^{s-1}_x}).
\end{equation*}
We now show that $A_2$, $I_4$, $I_5$ is also estimated as above. 
If we write
\begin{equation*}
\rd^\gam f := \rd_{\gam_1}\cdots \rd_{\gam_k} f \quad \text{for } f:\R^2\to \C,\ 
\gam= (\gam_1,\cdots ,\gam_k) \in \{ 1,2\}^k,
\end{equation*}
we have
\begin{equation*}
\begin{aligned}
\left. |A_2| \right|_{t=0}^{t=t_1}
&\le |b|
\sum_{|\gam_1|+|\gam_2|=k} \sum_{j=1,2}
\left. |\inp{\rd^{\gam_1} u_j J D^{\gam_2} \rd_j u}{JD^\gam \rd_t u}_{L^2} | \right|_{t=t_1}\\
&\hspace{90pt}+\left. |\inp{\rd^{\gam_1} u_j J D^{\gam_2} \rd_j u}{JD^\gam \rd_t u}_{L^2}| \right|_{t=0}  \\
&\le 2|b|
\sum_{|\gam_1|+|\gam_2| =k} \sum_{j=1,2} 
\nor{\rd^{\gam_1} u_j}{L^\infty_{t_1} L^\infty_x} 
\nor{D^{\gam_2} \rd_j u}{L^\infty_{t_1} L^2_x} 
\nor{D^\gam \rd_t u}{L^\infty_{t_1} L^2_x}\\
&\le \ep \nor{D^\gam \rd_t u}{L^\infty_{t_1} L^2_x} + C(\ep, \nor{u-\vec k}{L^\infty_T H^s_x})
\end{aligned}
\end{equation*}
by using the Sobolev inequality. For $I_4$, 
we use the commutator estimates obtained in \cite{M}:
\begin{equation}\label{b5}
\sum_{m=1,2}
\nor{[D^\gam, D_m] X}{L^2_x} \le C 
 \nor{u-\vec k}{H^{s}_x}^2 P(  \nor{u-\vec k}{H^{s}_x}) \nor{X}{H^{s-3}_x} ,
\end{equation}
\begin{equation}\label{b6}
\nor{[D^\gam, D_t] X}{L^2_x} \le C 
\nor{\rd_t u}{ H^{s-1}_x} \nor{u-\vec k}{H^{s}_x} P(\nor{\rd_t u}{ H^{s-1}_x} , \nor{u-\vec k}{H^{s}_x}) \nor{X}{H^{s-2}_x} . 
\end{equation}
By using (\ref{b6}), we have
\begin{equation*}
\begin{aligned}
\int_0^{t_1} |I_4| dt 
&\le |b| \int_0^{t_1} 
\sum_{\substack{|\gam_{1}| + |\gam_{2}| =k\\  }} \sum_{j=1,2}
 |\inp{\rd^{\gam_1} u_j J D^{\gam_2} \rd_j u}{J[D^\gam , D_t] \rd_t u}_{L^2} | \\
&\le 2|b| t_1
\sum_{|\gam_1|+|\gam_2| =k} \sum_{j=1,2} 
\nor{\rd^{\gam_1} u_j}{L^\infty_{t_1} L^\infty_x} 
\nor{D^{\gam_2} \rd_j u}{L^\infty_{t_1} L^2_x} 
\nor{[D^\gam , D_t] \rd_t u}{L^\infty_{t_1} L^2_x}\\
&\le C(\nor{u-\vec k}{L^\infty_T H^s_x})\, t_1\nor{\rd_t u}{L^\infty_{t_1} H^{s-1}_x}^2 
P(\nor{\rd_t u}{L^\infty_{t_1} H^{s-1}_x}).
\end{aligned}
\end{equation*}
For $I_5$, we have
\begin{equation*}
\begin{aligned}
\int_0^{t_1} |I_5| dt 
&\le |b| \sum_{j=1,2} 
\int_0^{t_1}
 |\inp{[D_t , D^{\gam}] u_j J \rd_j u}{JD^\gam \rd_t u}_{L^2} | \\
&\hspace{60pt} + |\inp{D^\gam (\rd_t u_j J \rd_j u)}{JD^\gam \rd_t u}_{L^2}|
+ |\inp{D^\gam (u_j J D_j \rd_t u)}{JD^\gam \rd_t u}_{L^2}|\, dt\\
&\le |b| \sum_{j=1,2}
\int_0^{t_1} |\inp{[D_t , D^{\gam}] u_j J \rd_j u}{JD^\gam \rd_t u}_{L^2} |\\
&\hspace{60pt}+  \sum_{|\gam_1| + |\gam_2| =k} 
|\inp{D^{\gam_1} \rd_t u_j J D^{\gam_2} \rd_j u}{JD^\gam \rd_t u}_{L^2}|\\
&\hspace{60pt}+ \sum_{\substack{|\gam_1| + |\gam_2| =k\\ |\gam_1|\neq 0 }} 
|\inp{D^{\gam_1} u_j J D^{\gam_2} D_j \rd_t u}{JD^\gam \rd_t u}_{L^2}|\\
&\hspace{60pt}+ |\inp{ u_j J [D^{\gam}, D_j] \rd_t u}{JD^\gam \rd_t u}_{L^2}|
+ |\inp{ u_j J  D_j D^{\gam} \rd_t u}{JD^\gam \rd_t u}_{L^2}| dt\\
&=: \sum_{l=1}^5 I_{5l}.
\end{aligned}
\end{equation*}
Except $I_{55}$, (\ref{b4}), (\ref{b5}) and the same argument as above yields the following estimate:
\begin{equation*}
\sum_{l=1}^4 I_{5l} \le C(\nor{u-\vec k}{L^\infty_T H^s_x})\, t_1\nor{\rd_t u}{L^\infty_{t_1} H^{s-1}_x}^2 
P(\nor{\rd_t u}{L^\infty_{t_1} H^{s-1}_x}).
\end{equation*}
For $I_{55}$, integration by part gives
\begin{equation*}
\begin{aligned}
I_{55} &= |b| \sum_{j=1,2} \int_{0}^{t_1} \left| \int_{\R^2} u_j \rd_j |D^\gam \rd_t u|^2 dx \right| dt\\
&= |b| \sum_{j=1,2} \int_0^{t_1} \left| \int_{\R^2}   \rd_j u_j  |D^\gam \rd_t u|^2 dx \right| dt\\
&\le C(\nor{u-\vec k}{L^\infty_T H^s_x})\, t_1 \nor{\rd_t u}{L^\infty_{t_1}H_x^{s-1}}^2 P(\nor{\rd_t u}{L^\infty_{t_1}H_x^{s-1}}).
\end{aligned}
\end{equation*}
Note that a sort of skew-adjoint structure of helicity term plays an essential role in the above estimate. 
Since the solution has the bound
\begin{equation*}
\nor{u-\vec k}{L^\infty_T H^{s}_x} \le C(\nor{u_0}{H^s} ,\del \nor{v_0}{H^{s-1}}), 
\end{equation*}
we obtain (\ref{b7}) as claimed.\qed
\end{proof}

\textit{Proof of Theorem 1.1 (i)}. 
We take a sequence $\{ v^{(n)}_0 \}_{n=1}^\infty \subset {H^{s-1}}$ satisfying 
\begin{equation*}
\sup_{n\in \bbn} \frac{1}{n} \nor{v_0^{(n)}}{H^{s-1}} <\infty\quad 
\text{and}
\quad
\sup_{n\in \bbn} \nor{v_0^{(n)}}{H^{s-2}} <\infty,
\end{equation*}
and let $u^{(n)}$ be a solution to (\ref{b1}) with $\del = n^{-1}$ and with 
\begin{equation*}
u^{(n)} |_{t=0} = u^{(0)},\quad \rd_t u^{(n)}|_{t=0} = v^{(n)}_0
\end{equation*}
for each $n$. Then Proposition \ref{P1} implies that the maximal existence time for $u^{(n)}$ is uniformly bounded below by some positive number $T$, and so is $\tilde{T}$ in Proposition \ref{P2}. 
Moreover, by Propositions \ref{P1} and \ref{P2}, this sequence of solutions satisfies
\begin{equation*}
\sup_{n\in\bbn} \nor{u^{(n)}}{L^\infty_T H^{s}} <\infty
\quad
\text{and}
\quad
\sup_{n\in\bbn} \nor{\rd_t u^{(n)}}{L^\infty_T H^{s-2}} <\infty.
\end{equation*}
This especially implies that 
$\{u^{(n)} \}_{n=1}^\infty$, if we take a subsequence if necessary, 
converges to some map $u$ in the sense that 
\begin{equation*}
\lim_{n\to \infty} \nor{u^{(n)}-u}{L^\infty_{\tilde{T}} L^2_x(K)} = 0 \quad \text{for every } K\Subset\R^2,
\end{equation*}
\begin{equation*}
u^{(n)} - \vec k \rightharpoonup  u -\vec k \quad \text{ in } H^s
\quad \text{ and }\quad
\rd_t u^{(n)} \to \rd_t u \quad
\text{ in } H^{s-1},
\end{equation*}
and hence $u$ satisfies (\ref{a1}) (see \cite{M} for the details). \qed 
%
%
%
%
\section{Uniqueness and Continuity}\label{Sc3}
In this section, we prove Theorem \ref{T1} (ii) and (iii). 
We mainly follow the argument of 
\cite{M,S2}. 
In this section, we shall write $\inp{f}{g} := \int_{\R^2} f\cdot \ovl{g} dx$ for $f,g:\R^2\to \C^3$. 
\bigskip\par
We first prepare the setting. 
We may only consider the positive direction in time, thus we set $I=[0,T]$ for $T>0$. 
Let $u^{(0)}$, $u^{(1)}\in C(I:L^\infty)\cap L^\infty (I:\dot{H}^1\cap\dot{H}^2)$ be two solutions. 
We may assume that $\nor{u^{(0)}|_{t=0} -u^{(1)}|_{t=0} }{L^\infty_{x}}$ is sufficiently small, and thus may choose $T$ so that $\nor{u^{(0)} -u^{(1)} }{L^\infty_{t,x}} < \pi$. \par
For $(x,t)\in \R^2\times [0,T]$, we consider a minimal geodesic from $u^{(0)}(x,t)$ to $u^{(1)}(x,t)$. 
In detail, we take a map $\gam (s,x,t): [0,1]\times \R^2\times [0,1]\to \bbs^2$ such that 
\begin{itemize}
\item $\gam (0,x,t) =u^{(0)} (x,t)$, $\gam (1,x,t) = u^{(1)} (x,t)$ in $(x,t)\in \R^2\times [0,T]$.
\item 
$\rd_{ss} \gam + (\gam_s \cdot \gam_s) \gamma = 0$ 
in $[0,1]\times\R^2\times [0,T]$.
\end{itemize}
Next, we consider the parallel transport along the geodesics. 
We define the operator $X(s,\sig): T_{\gam (\sig, x,t)} \bbs^2 \to T_{\gam (s,x,t)} \bbs^2$ such that 
for $\xi \in T_{\gam (\sig, x,t)} \bbs^2$, $X(s,\sig ) \xi$ is the parallel transport of 
$\xi$ along $\gam (s)$. 
$X(s,\sig)$ is, by definition, the resolution operator for the following ODE:
\begin{equation*}
D_s F \equiv \rd_{s} F + (F\cdot \gam_s)\gam = 0, \quad F:\R \to \R^3.
\end{equation*}

We set
\begin{equation*}
G(T) := \nor{q}{L^\infty_TL^2_x}^2 + \sum_{m=1,2}\nor{V_m}{L^\infty_TL^2_x}^2,
\end{equation*}
where
\begin{equation*}
q=u^{(0)}- u^{(1)}, \quad V_m:= X(1,0)\rd_m u^{(0)} - \rd_m u^{(1)} \text{ for } m=1,2.
\end{equation*}
In this section, we write $|u^{\max} - \vec k| := |u^{(0)} - \vec k|+ |u^{(1)} - \vec k|$ 
for simplicity, and we follow the same convention for other quantities. 
Then we claim the following:
\begin{prop}\label{P3}
Let $M = \nor{u^{\max}-\vec k}{L^\infty_T H^2_x}$. Then for all $p\in (2,\infty)$, we have
\begin{equation}\label{e1}
G(T)\le (G(0)^{\frac1p} + C(M)T)^{p}
\end{equation}
where $C(M)$ is a positive constant independent of $p$.
\end{prop}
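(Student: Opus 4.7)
My target is the Yudovich-type differential inequality
\begin{equation*}
g'(t) \le p\, C(M)\, g(t)^{1-1/p}\quad \text{for every } p\in (2,\infty),
\end{equation*}
where $g(t):=\nor{q(t)}{L^2_x}^2+\sum_{m=1,2}\nor{V_m(t)}{L^2_x}^2$ and $C(M)$ is independent of $p$. Dividing by $p\,g^{1-1/p}$ and integrating from $0$ to $T$ then yields $g(T)^{1/p}\le g(0)^{1/p}+C(M)T$, which is exactly (\ref{e1}).

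For $\nor{q}{L^2_x}^2$ I would differentiate in time and expand using the equation: writing $\rd_t q = q\times(-\Del u^{(0)}+b\,\nab\times u^{(0)}) + u^{(1)}\times(-\Del q+b\,\nab\times q)$, the triple-product identity annihilates the first group when paired with $q$ in $L^2$, while the second is integrated by parts once to transfer one derivative off of $\Del q$ (respectively $\nab\times q$). For $\nor{V_m}{L^2_x}^2$ I would follow the covariant energy method of \cite{M} as adapted to 2D Schr\"odinger maps in \cite{S2}: compute $\frac{d}{dt}\nor{V_m}{L^2_x}^2 = 2\inp{V_m}{D_t^{(1)} V_m}$, convert $D_t^{(1)} V_m$ into $X(1,0)D_t^{(0)}\rd_m u^{(0)} - D_t^{(1)}\rd_m u^{(1)}$ modulo curvature and $\gam_s$-commutators, substitute the equation, and integrate by parts. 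The Schr\"odinger-map part then matches exactly the one handled in \cite{M,S2}; the new helicity part is a first-order term whose derivative on $V_m$ is absorbed through the skew-adjoint identity $b\int u_j\rd_j|V_m|^2\,dx=-b\int \rd_j u_j\,|V_m|^2\,dx$, exactly as in the $I_{55}$ cancellation of Section \ref{Sc2}.

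After these manipulations the bound takes the schematic form $g'(t)\le C(M)\bigl(1+\nor{\nab q}{L^p_x}\bigr)\,g(t)^{1-1/p}+\text{lower order}$, and the Yudovich input is a Gagliardo--Nirenberg interpolation with explicit $p$-dependence, $\nor{\nab q}{L^p_x}\le C\sqrt{p}\,\nor{q}{H^2_x}^{1-2/p}\nor{q}{L^2_x}^{2/p}\le C\sqrt{p}\,M^{1-2/p}\,g(t)^{1/p}$; absorbing the $\sqrt{p}$ into the prefactor gives the target inequality with $C(M)$ independent of $p$. The main obstacle is the helicity-induced first-order term in the $V_m$-equation, which a priori loses a derivative on $V_m$ and would be fatal under the McGahagan--S2 scheme alone; closing the estimate hinges on the skew-adjoint structure of the helicity term borrowed from Section \ref{Sc2}, which keeps this contribution quadratic in $V_m$ and therefore compatible with the $g^{1-1/p}$ scaling required to run the Yudovich argument.
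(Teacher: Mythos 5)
You take essentially the same route as the paper: Proposition \ref{P3} is obtained there by combining the three estimates of Proposition \ref{P4}, whose proofs are exactly the McGahagan--\cite{S2} parallel-transport energy computation you outline, including the treatment of the helicity term by pushing its derivative part into a modified covariant operator and cashing in the skew-adjoint identity $b\int u_j\rd_j|V_m|^2\,dx=-b\int(\rd_ju_j)|V_m|^2\,dx$, with the Yudovich $p$-dependence entering through H\"older with $p$-dependent exponents, $L^2$--$L^4$ interpolation of $q$, $l$, $V_m$, and the inequality $\nor{f}{L^r}\le C\sqrt r\,\nor{f}{H^1}$. Two details in your sketch need repair, though neither changes the architecture. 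First, the interpolation inequality you invoke, $\nor{\nabla q}{L^p}\le C\sqrt p\,\nor{q}{H^2}^{1-2/p}\nor{q}{L^2}^{2/p}$, is false as stated: testing on a function with Fourier support at frequency $N$, the left side grows like $N^{2-2/p}$ while the right side grows like $N^{2-4/p}$; the correct exponents are $1-1/p$ and $1/p$. With this fix your schematic bound becomes $g'\le C(M)\,g^{1-1/p}+C(M)\sqrt p\,g^{1-1/(2p)}$, which is not yet $p\,C(M)\,g^{1-1/p}$; to reach the exact form (\ref{e1}) you must also use the a priori bound $g(t)\le C(M)$ (available because both solutions lie in $\vec k+H^2$ with norm at most $M$) to convert the stray factors $g^{1/(2p)}$ (or $g^{1/p}$ in your original version, which produces a term linear in $g$) into constants; the paper performs the analogous absorption when it combines Proposition \ref{P4} (i)--(iii).

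Second, the step ``compute $\frac{d}{dt}\nor{V_m}{L^2_x}^2$ and integrate the differential inequality'' is precisely what the paper does not take for granted: in the uniqueness class (\ref{a0}) one only knows $V_m\in L^\infty_TH^1$ with $\rd_tV_m$ of limited regularity, so the chain rule for $t\mapsto (G(t)+\ep)^{1/p}$ is justified there by pairing $\rd_tV_m$ with the frequency-truncated $P_kV_m$, integrating in time, and only then letting $k\to\infty$ and $\ep\to0$. Your proposal should record such a regularization (or an equivalent duality argument), since the pointwise-in-time inequality $g'\le p\,C(M)\,g^{1-1/p}$ is otherwise only formal. With these two repairs, your plan reproduces the paper's proof.
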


The key estimates for the proof are the following:

\begin{prop}\label{P4}
Let $M$ as above. 
Then the following estimates are true for all $p>2$.\\
(i) $\nor{ \rd_t \nor{q }{L^2_x}^2 }{L^\infty_T} \le C(M) \nor{q}{L^\infty_T H^1_x}^{2}$.\\
(ii) $\nor{\nabla q }{L^\infty_TL^2_x} \le 
\sum_{m=1,2} \nor{V_m }{L^\infty_T L^2_x} + 
C(M)\sqrt{p} \nor{q }{L^\infty_TL^2_x}^{1-1/p}$.\\
(iii) $
\nor{\inp {\rd_t V_m}{ V_m}  }{L^\infty_T } \le C(M) p\, 
G(T)^{1-1/p}
$.
\end{prop}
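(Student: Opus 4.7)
My plan is to extend the geometric Yudovich-type energy argument from Shimizu's earlier work \cite{S2} on Schr\"odinger maps (the $b=0$ case, itself based on McGahagan \cite{M}) to accommodate the helicity term. The three parts of Proposition \ref{P4} correspond to (i) an $L^2$ energy estimate on the extrinsic difference $q$; (ii) a pointwise geometric comparison between the intrinsic difference $V_m$ and the extrinsic gradient $\nabla q$, combined with a $p$-dependent Sobolev embedding; and (iii) the main tangent-bundle energy estimate for $V_m$, where the helicity term requires special care.

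For (i), I would differentiate $\nor{q}{L^2_x}^2$ in time, substitute (\ref{a1}) for both solutions, and integrate by parts. The Schr\"odinger-map contribution yields, via the identity $u^{(j)}\times \Del u^{(j)} = \sum_m \rd_m (u^{(j)}\times \rd_m u^{(j)})$, an integrand bilinear in $(q,\nabla q)$ with $L^\infty$-bounded coefficients (bounded by $M$ through the $H^2 \hookrightarrow L^\infty$ embedding in two dimensions). The helicity contribution, being already first-order in the derivatives of $u^{(j)}$, admits a direct Cauchy-Schwarz bound of the same type. Collecting gives the claimed bound $C(M)\nor{q}{H^1}^2$.

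For (ii), I would decompose $\rd_m q = (1-X(1,0))\rd_m u^{(0)} + V_m$. Since the geodesic joining $u^{(0)}(x,t)$ to $u^{(1)}(x,t)$ has length $|q(x,t)|$, a standard ODE estimate for the parallel-transport equation $D_s F=0$ yields the pointwise bound $|(1-X(1,0))\xi|\le C|q||\xi|$. This reduces the task to bounding $\nor{q\,\nabla u^{(0)}}{L^2}$, which I would estimate by H\"older with exponents $2p/(p-1)$ and $2p$. The two-dimensional Gagliardo-Nirenberg inequality $\nor{f}{L^{2p}}\le C\sqrt{p}\,\nor{f}{L^2}^{1/p}\nor{\nabla f}{L^2}^{1-1/p}$ applied to $\nabla u^{(0)}$ (whose $H^1$ norm is bounded by $M$), combined with $\nor{q}{L^{2p/(p-1)}}\le \nor{q}{L^2}^{1-1/p}\nor{q}{L^\infty}^{1/p}$ and the trivial bound $\nor{q}{L^\infty}\le 2$, produces exactly the factor $\sqrt{p}\,\nor{q}{L^2}^{1-1/p}$.

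Part (iii) is the main difficulty. I would compute
\begin{equation*}
\rd_t V_m = (\rd_t X(1,0))\rd_m u^{(0)} + X(1,0)\rd_t \rd_m u^{(0)} - \rd_t \rd_m u^{(1)},
\end{equation*}
rewriting the $\rd_t \rd_m u^{(j)}$ using the equation $\rd_t u^{(j)} = -J^{(j)}D_\ell \rd_\ell u^{(j)} + b\,u^{(j)}\times (\nabla\times u^{(j)})$, where $J^{(j)} := u^{(j)}\times$. Pairing with $V_m$ in $L^2_x$ and using that $X(1,0)$ is an isometry between the relevant tangent spaces allows all quantities to be compared in $T_{u^{(1)}}\bbs^2$. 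The Schr\"odinger-map part is handled as in \cite{S2}: after integration by parts in $x$ and bookkeeping of the commutators $[D_t,X(1,0)]$ and $[D_\ell,D_t]$, the resulting terms are bounded by $C(M)\,p\,G(T)^{1-1/p}$ via the same Gagliardo-Nirenberg machinery as in (ii), with the linear factor $p$ coming from one application of the $L^{2p}$ inequality. The new work is the helicity contribution; here a naive estimate would lose one derivative on $V_m$, and I expect to overcome this exactly as in the treatment of $I_{55}$ in Proposition \ref{P2}: the term of the form $b\,\inp{u_j\,\rd_j V_m}{V_m}$ is rewritten as $\tfrac{b}{2}\int u_j \rd_j |V_m|^2\,dx$ and integrated by parts in $x$, so that the derivative falls on $u_j$ and the term is controllable by $M\,\nor{V_m}{L^2}^2\le M\,G(T)$, which fits into $C(M)\,p\,G(T)^{1-1/p}$ since $G(T)$ is a priori bounded. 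The principal obstacle will be the bookkeeping needed to reduce every term arising from the expansion of $\rd_t V_m$ to either a standard Schr\"odinger-map type term treatable by the Yudovich machinery or to this skew-adjoint helicity structure.
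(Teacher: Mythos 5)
Your scheme is in substance the one the paper uses: (i) is the same energy identity; (ii) reconstructs the argument the paper cites from \cite{S2} (the pointwise parallel-transport comparison $|(1-X(1,0))\xi|\le C|q||\xi|$ plus the Yudovich-type inequality $\nor{f}{L^r}\le C\sqrt r\nor{f}{H^1}$); and in (iii) your symmetric integration by parts on $b\inp{u_j\rd_jV_m}{V_m}$ is exactly the cancellation the paper encodes by writing the principal part as $-J\sum_k(D_k-\tfrac b2 u^{(0)}_kJ)^2V_m$, a skew-adjoint magnetic-type operator, with the leftovers collected in $R_5,\dots,R_7$. So the decomposition and the key structural idea match; the difference is only in packaging.

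The genuine defect is your recurring use of $\nor{\nabla u^{(j)}}{L^\infty}\le C(M)$. With $M=\nor{u^{\max}-\vec k}{L^\infty_T H^2}$ in two dimensions, $H^2\hookrightarrow L^\infty$ controls $u-\vec k$ but not $\nabla u$ (since $H^1\not\hookrightarrow L^\infty$); the absence of this bound is precisely why the proposition carries $p$-dependent constants and is run through the Yudovich device rather than a plain Gronwall argument (compare the remark in Section \ref{Sc6} that the ordinary energy method is only available once $\nabla u\in L^\infty$, i.e. $s>2$). In (i) this is a cosmetic slip: the terms with coefficient $\nabla u^{\max}$ are handled by $L^4$--$L^4$--$L^2$ H\"older using $H^2\hookrightarrow W^{1,4}$ and $H^1\hookrightarrow L^4$, still giving $C(M)\nor{q}{H^1}^2$. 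In (iii), however, your key step fails as stated: after the integration by parts you must bound $\int(\rd_ju^{(0)}_j)|V_m|^2\,dx$, and the asserted control by $M\nor{V_m}{L^2}^2$ needs exactly the unavailable $\nor{\nabla u^{(0)}}{L^\infty}$. The correct treatment --- which is what the paper does for $R_7$ --- is the same $L^{2p}$ trick you already use in (ii): $\int|\nabla u^{\max}||V_m|^2\le \nor{\nabla u^{\max}}{L^{2p}}\nor{V_m}{L^{4p/(2p-1)}}^2\le C(M)\sqrt p\,\nor{V_m}{L^2}^{2-2/p}\nor{V_m}{L^4}^{2/p}\le C(M)\sqrt p\,G(T)^{1-1/p}$, which in turn uses the a priori bound $V_m\in L^\infty_T H^1$ (itself requiring justification, as in \cite{S2}). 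The same caution applies to the remaining helicity remainders of the form $(\rd_m u^{(0)}_l)V_l$ and $(\rd_m q_l)\rd_l u^{(1)}$: they too must go through the $p$-dependent interpolation, not through an $L^\infty$ bound on gradients. With those replacements your argument closes and agrees with the paper's.
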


We first show that these propositions imply Theorem \ref{T1} (ii) and (iii).\bigskip\par
\textit{Proof of Theorem \ref{T1} (ii).} 
We assume that $u^{(0)}|_{t=0}=u^{(1)}|_{t=0}$. 
Then we have $G(0)=0$. 
Here we may suppose that $T$ is sufficiently small such that $C(M)T<1$, 
, where $C(M)$ is as in (\ref{e1}). 
Taking the limit $p\to \infty$, we have $G(T)=0$, which completes the proof. \qed\bigskip\par
\textit{Proof of Theorem \ref{T1} (iii).} 
Let $T_{\max}^{(n)}>0$ be the maximal existence time of the solution $u^{(n)}$ in the class (\ref{a0}). 
Then Theorem \ref{T1} (i) implies that there exists $T>0$ such that 
\begin{itemize}
\item $T_{\max}^{(n)} > T$ for all $n\in\bbn$,
\item $M_2:= \sup_{n\in \bbn} \nor{u^{(n)}-\vec k}{L^\infty_T H^s} <\infty$.
\end{itemize}
Especially by the Sobolev embedding, we may take $T$ so that 
$\sup_{n\in \bbn} \nor{u^{(n)}-\vec k}{L^\infty_{t,x}([0,T]\times \R^d)} <\infty$. 
Then Proposition \ref{P3} yields
\begin{equation*}
G_{n_1,n_2} (T) \le ( G_{n_1,n_2}(0)^{1/p} + C(M_2) T)^{p},
\end{equation*}
where 
\begin{equation*}
G_{n_1,n_2} (T) := \nor{u^{(n_1)}- u^{(n_2)}}{L^\infty_TL^2_x}^2 + 
\sum_{m=1}^d\nor{ X(1,0)\rd_m u^{(n_1)} - \rd_m u^{(n_2)} }{L^\infty_TL^2_x}^2. 
\end{equation*}
By taking the limit $n_1,n_2\to \infty$, we have
\begin{equation*}
\limsup_{n_1,n_2\to \infty} G_{n_1,n_2} (T) \le (C(M_2) T)^{p}.
\end{equation*}
Here we may suppose that $T$ is sufficiently small so that $C(M_2)T<1$. 
Thus by taking $p\to \infty$, we have
\begin{equation*}
\limsup_{n_1,n_2\to \infty} G_{n_1,n_2} (T) =0,
\end{equation*}
which implies
\begin{equation*}
\lim_{n\to \infty} \nor{u^{(n)} - u}{L^\infty_T H^1} =0.
\end{equation*}
Consequently, 
by interpolation, we have
\begin{equation*}
\lim_{n\to \infty} \nor{u^{(n)} - u}{L^\infty_T H^{s-\ep}} =0.
\end{equation*}
which completes the proof.\qed\bigskip\par
We next observe that Proposition \ref{P4} implies Proposition \ref{P3}. \bigskip\par
%
\textit{Proof of Proposition \ref{P3}. } 
We first note that $V_m \in L^\infty(I:H^1)\cap W^{1,\infty} (I:\dot{H}^1)$. 
See \cite{S2} for the detailed proof. \par
Let $\ep>0$ be arbitrary number. 
Let $\psi\in C^\infty (\R)$ be a cut-off function, and define the operator
$P_k := (2\pi)^{-1} \caf^{-1} \psi (\cdot / 2^{k}) \caf $. 
Then for sufficiently large $k$ uniformly in $t$, we have
\begin{equation*}
\left| \nor{q}{L^2}^2 + \sum_{m=1,2} \inp{V_m}{P_k V_m} +\ep \right| >0 .
\end{equation*}
Since $P_k V_m \in C(I:H^1)$, we have
\begin{equation*}
\begin{aligned}
&\rd_t \left( \nor{q}{L^2}^2 + \sum_{m=1,2} \inp{V_m}{P_k V_m} +\ep \right)^{1/p} \\
=\frac{1}{p} & \left( \nor{q}{L^2}^2 + \sum_{m=1,2}\inp{V_m}{P_k V_m} +\ep \right)^{1/p-1} 
\left( \rd_t \nor{ q}{L^2_x}^2 + 2\sum_{m=1,2}\re\inp{\rd_t V_m}{P_kV_m} \right),
\end{aligned}
\end{equation*}
for all $t\in(0,T)$, and thus
\begin{equation*}
\begin{aligned}
&\left. \left( \nor{q}{L^2}^2 + \sum_{m=1,2} \inp{V_m}{P_k V_m} +\ep \right)^{1/p} \right|_{t=0}^{t=T}\\
= \int_0^T \frac{1}{p} & \left( \nor{q}{L^2}^2 + \sum_{m=1,2} \inp{V_m}{P_k V_m} +\ep \right)^{1/p-1} 
\left( \rd_t \nor{ q}{L^2_x}^2 + 2\sum_{m=1,2}\re\inp{\rd_t V_m}{P_kV_m} \right) dt.
\end{aligned}
\end{equation*}
By taking the limit $k\to\infty$, it follows that
\begin{equation*}
\begin{aligned}
& \left( G(T) +\ep \right)^{1/p} - \left( G(0) +\ep \right)^{1/p}\\
&= \int_0^T \frac{1}{p}  \left( G(T) +\ep \right)^{1/p-1} \re \left( \rd_t \nor{q}{L^2}^2 + \sum_{m=1,2} \inp{\rd_t V_m}{V_m} \right) dt\\
&\le C(M) T, 
\end{aligned}
\end{equation*}
where we used Proposition \ref{P4}. By taking $\ep\to 0$, we obtain (\ref{e1}).\hfill\qed\bigskip\par
\textit{Proof of Proposition \ref{P4}.}
For the proof of (ii), 
we refer the reader to \cite{S2}. Thus we prove the rest part.\par
(i) For a.a. $t\in I$, we have
\begin{equation}\label{by1}
\begin{aligned}
&\frac{1}{2} \frac{d}{dt} \nor{q}{L^2_x}^2
= \inp{\rd_t q}{q}_{L^2_x} \\
& = \inp{-q\times \Del u^{(0)} - u^{(1)}\times \Del q}{q}_{L^2_x}
-b \sum_{j=1}^2 \inp{q_j \rd_j u^{(0)} + u^{(1)}\rd_j q }{q}_{L^2_x} \\
& =  \sum_{j=1}^2 \inp{\rd_j (u^{(0)} + u^{(1)} ) \times \rd_j q}{q}_{L^2_x} 
+b \sum_{j=1}^2
\left(
\int_{\R^2} u^{(0)}\rd_j (|q|^2) dx -\inp{ u^{(1)}\rd_j q }{q}_{L^2_x} 
\right)
\\
&\le C (M) \nor{q}{H^1}^2.
\end{aligned}
\end{equation}

(iii) We first define $J$ by the complex structure of $\bbs^2$, which can be explicitly written as
\begin{equation*}
J\xi := p\times \xi \quad \text{ for } \xi\in T_p\bbs^2.
\end{equation*}
From (\ref{a1}), $V_m$ satisfies the following equation:
\begin{equation*}
D_t V_m = -J\sum_{k=1}^2 (D_k - \frac{b}{2} u^{(0)}_k J )^2 V_m + \sum_{\al=1}^7 R_\al,
\end{equation*}
where
\begin{equation*}
R_1= [D_t , X ]\rd_m u^{(0)},
\end{equation*}
\begin{equation*}
R_2= \sum_{k=1}^2 J [D_k , X ] D_k \rd_m u^{(0)},
\end{equation*}
\begin{equation*}
R_3= \sum_{k=1}^2 J D_k [D_k , X ] \rd_m u^{(0)},
\end{equation*}
\begin{equation*}
R_4= -J \sum_{k=1}^2 \left\{  R(X\rd_m u^{(0)}, X\rd_k u^{(0)}) X\rd_k u^{(0)} 
- R(\rd_m u^{(1)} , \rd_k u^{(1)}) \rd_k u^{(1)} \right\} ,
\end{equation*}
\begin{equation*}
R_5 = -b \sum_{k=1}^2 u^{(0)}_j [X, D_j] \rd_m u^{(0)}, 
\end{equation*}
\begin{equation*}
R_6 = -b \sum_{k=1}^2 q_j D_j \rd_m u^{(1)} ,
\end{equation*}
\begin{equation*}
R_7= \sum_{k=1}^2 
\left(
-b \rd_m u^{(0)} V_k - b\rd_m q \rd_j u^{(1)} + \frac{b}{2} \rd_k u^{(0)}_k V_m
- \frac{b^2}{4} (u^{(0)}_k)^2 J V_m
\right)
\end{equation*}
It suffices to show the following estimate for $m=1,2$ and $t\in I$:
\begin{equation}\label{ae1}
\sum_{\al=1}^7 \left| \inp{R_\al}{V_m} \right| \le C(M_1) p G(T)^{1-1/p}.
\end{equation}
For $\al=1,\cdots, 4$, we refer the reader to \cite{S2}. 
We only consider the case when $\al=5,6,7$. 
Denoting the distance between $u^{(0)}(x,t)$ and $u^{(1)}(x,t)$ on $\bbs^2$ by $l(x,t)$, we have
\begin{equation*}
\begin{aligned}
|\inp{R_5}{V_m}| 
&\le C \int_{\R^2} l |\nab u^{\max}|^2 |V_m| dx\\
&\le C 
\nor{l}{L^\infty_T L^{\frac{4p}{2p-1}}_x} 
\nor{\nab u^{\max}}{L^\infty_T L^{4p}_x}^2 
\nor{V_m}{L^\infty_T L^{\frac{4p}{2p-1}}_x}\\
&\le C p
\nor{l}{L^\infty_T L^{2}_x}^{1-1/p} \nor{l}{L^\infty_T L^{4}_x}^{1/p} 
\nor{\nab u^{\max}}{L^\infty_T H^1}^2 
\nor{V_m}{L^\infty_T L^{2}_x}^{1-1/p} \nor{V_m}{L^\infty_T L^{4}_x}^{1/p} \\
&\le C(M) p \nor{l}{L^\infty_T H^1}^{1-1/p} \nor{V_m}{L^\infty_T L^2_x}^{1-1/p},
\end{aligned}
\end{equation*}
In the third inequality above, we used the inequality
\begin{equation*}
\nor{f}{L^r} \le C\sqrt{r} \nor{f}{H^1}
\end{equation*}
for $f\in H^1$ and $r\in [2,\infty)$ (see for example \cite{O} for the proof). We also have
\begin{equation*}
\begin{aligned}
|\inp{R_6}{V_m}|
&\le C \sum_{j=1}^2 \nor{q}{L^\infty_T L^{4p}_x } \nor{D_j \rd_m u^{(0)} }{L^\infty_T L^2_x}
\nor{V_m}{L^\infty_T L^{\frac{4p}{2p-1}}_x}\\
&\le C(M) p \nor{q}{L^\infty_T H^1} \nor{V_m}{L^\infty_T L^2_x}^{1-1/p},
\end{aligned}
\end{equation*}
\begin{equation*}
\begin{aligned}
|\inp{R_7}{V_m}|
&\le C \int_{\R^2} ( |\nab u^{\max}| +1) |V_m|^2 dx + C \int_{\R^2} |\nab q| |\nab u^{\max}| |V_m| dx\\
&\le C \nor{\nab u^{\max}}{L^\infty_T L^{2p}_x} \nor{V_m}{L^\infty_T L^{\frac{4p}{2p-1}}_x}^2 + C\nor{V_m}{L^\infty_T L^2_x}^2\\
&\hspace{15pt} + C\nor{\nab q}{L^\infty_T L^{\frac{4p}{2p-1}}_x} \nor{\nab u^{\max}}{L^\infty_T L^{4p}_x}
\nor{V_m}{L^\infty_T L^{\frac{4p}{2p-1}}_x}
\\
&\le C(M) \sqrt{p} \nor{V_m}{L^\infty_T L^2_x}^{1-1/p}.
\end{aligned}
\end{equation*}
Since $|l|\le C |q|$, the proof is completed. 
\qed
%
%
%
%

\section{The method of orthonormal frames and the modified Schr\"odinger map equation}\label{Sc4}

Our proof of Theorem \ref{T2} is based on 
the method using orthonormal frames. 
In this section, we prepare the setting for the argument. \bigskip\\
Let $u=u(x,t):\R^2\times [0,T]\to \bbs^2$ be a solution in the class $C([0,T]: \vec k+ H^s)$ with $s>2$, $T>0$. We further assume that $u(\cdot, t)$ is zero-homotopic for each $t\in [0,T]$. Then, we can construct an orthonormal frame of the tangent bundle $u^{-1}T\bbs^2$ with the following property:

\begin{lem}\label{EL05}
There exist $v,w:\R^2\times [0,T]\to \R^3$ s.t. \\
(i) $v- \vec k_1$, $w- \vec k_2\in C([0,T]:H^{\lfloor s \rfloor})$ with $ \vec k_1 = {}^t(1,0,0),  \vec k_2={}^t(0,1,0)$.\\
(ii) $\{u(x,t), v(x,t),w(x,t)\}$ is a positively-oriented orthonormal basis of $\R^3$ for each $(x,t)\in\R^2\times [0,T]$.\\
(iii) $\rd_m v \cdot w \in C([0,T]:L^1)$.
\end{lem}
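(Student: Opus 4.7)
The idea is to construct the frame in two pieces: an explicit formula on the exterior where $u$ is close to $\vec k$, and an interior extension whose existence exploits the zero-homotopic hypothesis. Since $u-\vec k\in C([0,T]:H^s)$ with $s>2$, the Sobolev embedding $H^s(\R^2)\hookrightarrow C_0(\R^2)$ yields $u(x,t)\to\vec k$ uniformly in $t$ as $|x|\to\infty$. Choose $R_0>0$ large enough that $|\vec k_1\cdot u(x,t)|<1/2$ on $\{|x|\ge R_0\}\times[0,T]$, and on this region set
\begin{equation*}
v(x,t):=\frac{\vec k_1-(\vec k_1\cdot u)\,u}{\sqrt{1-(\vec k_1\cdot u)^2}},\qquad w(x,t):=u(x,t)\times v(x,t).
\end{equation*}
One checks that $\{u,v,w\}$ is a positively-oriented orthonormal triple with $v\to\vec k_1$ and $w\to\vec k_2$ at infinity; the quantities $v-\vec k_1$ and $w-\vec k_2$ are smooth algebraic expressions in $u-\vec k$ vanishing at $u=\vec k$, so standard product/composition estimates in Sobolev spaces give their $H^{\lfloor s\rfloor}$-regularity on the exterior.

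Over the contractible disk $\overline{B(0,R_0)}$, the pullback $u^{-1}T\bbs^2$ is topologically trivial, hence admits an $H^{\lfloor s\rfloor}$ orthonormal frame $(\tilde v,\tilde w)$. On the matching circle $\{|x|=R_0\}$ the interior and exterior frames differ by an $SO(2)$-valued map $g:S^1\to SO(2)\cong S^1$, and the obstruction to gluing is the winding number $n(g)$. Using $u\to\vec k$ to extend $u$ continuously to a map $\bbs^2\to\bbs^2$ via one-point compactification, a standard calculation identifies $n(g)$ with $2\deg(u)$ (since the Euler class of $T\bbs^2$ equals $2$). The zero-homotopic hypothesis gives $\deg(u)=0$, hence $n(g)=0$, so $g$ admits a single-valued $\R$-valued lift $\theta_0:S^1\to\R$. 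Extending $\theta_0$ smoothly into the disk (e.g.\ by harmonic extension) produces a gauge rotation converting $(\tilde v,\tilde w)$ into a frame matching $(v,w)$ across $|x|=R_0$; time-continuity is inherited from $u\in C([0,T]:H^s)$ and the continuous dependence of each step on $u$.

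For condition~(iii), on the exterior the explicit formula makes $v-\vec k_1$ and $w-\vec k_2$ polynomial in the components of $u-\vec k$ of order at least one (for instance $v_2=-u_1u_2/\sqrt{1-u_1^2}$ is bilinear). Decomposing
\begin{equation*}
\partial_m v\cdot w=\partial_m v_2+\partial_m v\cdot(w-\vec k_2),
\end{equation*}
the second term is in $L^1$ by Cauchy--Schwarz since $\partial_m v$ and $w-\vec k_2$ are in $L^2$; for the first, $\partial_m v_2$ is a sum of products of one $L^2$ derivative factor and one $L^2\cap L^\infty$ factor, hence also in $L^1$. On the compact interior the smooth frame gives a bounded integrand. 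The main obstacle is the global patching: one must execute the gauge rotation by the extended $\theta_0$ in such a way that $H^{\lfloor s\rfloor}$-regularity and $t$-continuity survive through the annular transition region. The crucial topological input---that $\theta_0$ be well-defined as an $\R$-valued, and not merely $\R/2\pi\Z$-valued, function---is precisely where the zero-homotopic hypothesis enters in an essential way.
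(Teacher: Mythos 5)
Your route is genuinely different from the paper's: you build the exterior frame by the explicit normalization $v=N(\vec k_1,u)$, trivialize the pullback bundle over the interior disk, and kill the gluing obstruction by identifying the winding number of the transition map with $2\deg(u)=0$; the paper instead propagates an approximate frame along the null-homotopy $H$ in finitely many small steps using the same normalization map $N$, and only then worries about regularity. Your topological reasoning (Euler number of $\hat u^*T\bbs^2$ equals $2\deg \hat u$, degree zero from the zero-homotopy, hence a continuous $\R$-valued lift $\theta_0$, continuously in $t$) is sound. But the proof as written has a genuine gap at exactly the step you flag as ``the main obstacle'' and then leave unresolved: if you rotate the interior frame by the harmonic extension of $\theta_0$ from the circle $\{|x|=R_0\}$, the two frames agree on the circle only in value, not in their normal derivatives, so the glued $v$ is merely Lipschitz across the interface. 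Since $s>2$ forces $\lfloor s\rfloor\ge 2$, this does not give $v-\vec k_1\in C([0,T]:H^{\lfloor s\rfloor})$, and the same problem infects (iii) through $\rd_m v$. A second, smaller gap is that the interior frame $(\tilde v,\tilde w)$ is asserted to exist in $H^{\lfloor s\rfloor}$, continuously in $t$, from topological triviality alone; triviality only yields a continuous frame, and the quantitative regularity and $t$-continuity require an actual construction (e.g. radial parallel transport, using $\nab u\in H^{s-1}\hookrightarrow C^0$ and a $t$-continuous choice of initial vector at the center).

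Both gaps can be closed, and the paper's Step 2 shows the cleanest way: use your topological construction only to produce a \emph{continuous} global approximate frame $v'$ with $v'\to\vec k_1$ at infinity (no Sobolev regularity needed), then mollify and cut off to get a smooth $v''$ with $|v''\cdot u|\le 2^{-10}$, $|v''|$ close to $1$ and $v''\equiv\vec k_1$ for $|x|\ge R$, and finally set $v=N(v'',u)$, $w=u\times v$. The exact orthonormality is restored by the projection, and the $H^{\lfloor s\rfloor}$-regularity, the time continuity, and the $L^1$ bound for $\rd_m v\cdot w$ are then inherited from $u$ and the smooth $v''$ via the explicit algebraic formulas (as in the paper's computation of $v-\vec k_1$ and $\rd v$ for $|x|>R$), rather than from the glued low-regularity object. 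Alternatively, one can avoid matching along a curve altogether by lifting the rotation angle on an overlap annulus where both frames are defined and interpolating it with a smooth cut-off; but some device of this kind is needed -- matching values on a circle alone cannot deliver the stated regularity.
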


\begin{proof}
Our construction follows the argument in \cite{BIK}. 
We divide the proof into 2 steps.\bigskip\par
\textit{Step 1}. 
We first note that $\lim_{|x|\to \infty} u(x,t) = \vec k$ uniformly in $t$, and hense we can consider $u$ as 
a continuous map on $\bbs^2$. 
In this step, we shall construct a continuous map $v':\R^2\times [0,T] \to \bbs^2$ such that $u\cdot v'\equiv 0$, and 
$\lim_{|x|\to\infty} v'(x,t)= \vec k_1 $. 
By assumption, there is a continuous map $H:\bbs^2\times [0,1]\to \bbs^2$ such that $H(x,0)=u(x)$ and $H(x,1)=\vec k$ for $x\in \bbs^2$. 
By uniform continuity of $H$, there exists $n\in \bbn$ such that 
\begin{equation*}
\dist((x_1,t_1,\al_1),  (x_2,,t_2,\al_2)) < \frac2n\quad \Longrightarrow 
|H(x_1,t_1,\al_1) - H(x_2,t_2, \al_2)| < 2^{-10}.
\end{equation*}
Now we set
\begin{equation*}
N(U, V) := \frac{U-(U\cdot V)V}{|U-(U\cdot V)V|} \quad \text{ for } U,V\in \bbs^2. 
\end{equation*}
This is well-defined when $|U\cdot V|<2^{-5}$. 
For $(x,t,\al)\in \bbs^2 \times [0,T] \times [\frac{n-1}{n} , 1]$, we define 
\begin{equation*}
v'(x,t,\al) := N(\vec k_1 , H(x,t,\al)).
\end{equation*}
This is well-defined since $|\vec k_1\cdot H(x,t,\al)|<2^{-10}$, and satisfies $|v|\equiv 1$ and $v'\cdot H\equiv 0$ on $\bbs^2\times [0,T]\times [\frac{n-1}{n} , 1]$. 
Next, for $(x,t,\al)\in \bbs^2\times [0,T]\times [\frac{n-2}{n} , \frac{n-1}{n}]$, we define
\begin{equation*}
v'(x,t,\al) = N(v'(x,t,\frac{n-1}n) , H(x,t,\al)).
\end{equation*}
Then this is also well-defined since $|v'(x,t,\frac{n-1}{n})\cdot H(x,\al)|<2^{-10}$, and satisfies $|v'|\equiv 1$ and $v'\cdot H\equiv 0$ on $\bbs^2\times[0,T]\times [\frac{n-2}{n} , \frac{n-1}{n}]$. 
Repeating this procedure $n$ times, we obtain a map $v':\R^2\times[0,T]\times [0,1]\to \bbs^2$. 
By the above construction, $v'(x,t,0)$ is exactly the desired map in this step. \bigskip\par
\textit{Step 2}. 
We next regularize $v'$ constructed above. 
Convolution with mollifiers and multiplication by smooth cut-off function 
yield $v'':\R^2\times[0,T]\to \R^3$ with the following properties:
\begin{itemize}
\item $v'':\R^2\to \R^3$, $v''\in C^\infty$.
\item $|v''|\in [1-2^{-10}, 1+2^{-10}]$.
\item $|v''\cdot u|\le 2^{-10}$.
\item $v''(x,t) = \vec k_1$ if $|x|\ge R$ for some $R>0$ independent of $t$.
\end{itemize}
Then we define $v$ by $v= N(v'', u)$. 
Clearly we have $|v|\equiv 1$ and $v\cdot u \equiv 0$ in $\R^2\times [0,T]$. 
The task is now to check the regularity of $v$. 
Let us first prove $v\in C_t(\vec k_1 + L^2)$. 
It suffices to prove $(v-\vec k_1)\chi_{|x|>R} \in C_tL^2_x$. For $|x|>R$, we have
\begin{equation*}
v-\vec k_1 =N(\vec k_1 ,u) -\vec k_1= \frac{u_1^2}{A(1+A)} \vec k_1 - \frac{u_1}{A} u,
\quad
A= \sqrt{1-u_1^2},
\end{equation*}
which leads to the claim. 
We next check that $\rd v \in C_tL^2$ for $\rd =\rd_1,\rd_2$. It again suffices to show 
$\rd v \chi_{|x|>R} \in C_tL^2_x$. For $|x|>R$, we have
\begin{equation*}
\rd v = \frac{ (1+2A) u_1^3 \rd u_1}{A^3(1+A)^2}\vec k_1
+ \frac{2u_1\rd u_1 }{A(1+A)} \vec k_1 
- \frac{\rd u_1}{A^3}u - \frac{u_1}{A}\rd u,
\end{equation*}
which leads to the claim. The above calculation also implies that $\rd v \cdot w\in C_tL^1_x$ where $w= u\times v$. We can check the higher regularities in the similar way. 
\qed
\end{proof}
Now we next define the \textit{differentiated field} by
\begin{equation*}
\psi_m := \rd_m u \cdot v + i \rd_m \cdot w \quad \text{ for } m=0,1,2,
\end{equation*}
and the \textit{connection coefficient} by
\begin{equation*}
A_m := \rd_m v\cdot w = -v\cdot \rd_m w  \quad \text{ for } m=0,1,2.
\end{equation*}
We also set
\begin{equation*}
\psi := {}^t (\psi_1, \psi_2)\quad \text{and}\quad A:= {}^t (A_1,A_2).
\end{equation*}
Geometrically, $\psi_m$ is the representation of $\rd_m u$ in terms of the complex coordinate with axis $(v,w)$, 
and $A_m$ is the corresponding Christoffel symbol. 
%
%
These quantities are related to 
the original maps $u,v,w$ 
via the following equation:
\begin{equation}\label{Ea1}
\rd_m 
\begin{pmatrix}
u\\
v\\
w
\end{pmatrix}
= 
\begin{pmatrix}
0 & \re \psi_m & \im \psi_m \\
-\re\psi_m & 0 & A_m\\
-\im \psi_m & -A_m & 0
\end{pmatrix}
\begin{pmatrix}
u\\
v\\
w
\end{pmatrix}
.
\end{equation}
We next introduce the operator, called \textit{covariant derivative}, as follows: 
\begin{equation*}
D_m := \rd_m + iA_m \quad\text{ for } m=0,1,2.
\end{equation*}
Then $\psi$, $A$ satisfies the following relations:
\begin{equation}\label{g1}
D_m \psi_l = D_l \psi_m \quad\text{ for } l,m=0,1,2.
\end{equation}
\begin{equation}\label{g2}
[D_m, D_l] = i (\rd_m A_l - \rd_l A_m) = i\im (\psi_m \ovl{\psi_l}) \quad\text{ for } l,m=0,1,2.
\end{equation}
(\ref{g1}) represents the commutability of covariant derivatives and ordinary derivatives, 
and (\ref{g2}) represents the Ricci curvature tensor. \bigskip\par
The main advantage of our introduction of these quantities 
is the fact that $\psi$ satisfies a system of nonlinear Schr\"odigner equations with magnetic potential $A$. 
Indeed, from (\ref{a1}), we have
\begin{equation}\label{g3}
\psi_0 = -i \sum_{l=1}^2 D_l \psi_l  - b \sum_{l=1}^2 u_l \psi_l.
\end{equation}
Hence by (\ref{g1}) and (\ref{g2}), for $m=1,2$, it follows that
\begin{equation}\label{g4}
D_0 \psi_m = - i\sum_{l=1}^2 D_l D_l \psi_m + \sum_{l=1}^2 \im (\psi_m  \ovl{\psi_l}) \psi_l 
- b \sum_{l=1}^2 D_m (u_l \psi_l) . 
\end{equation}
In the case when $b=0$, 
(\ref{g4}) is called \textit{modified Schr\"odinger map equation}.
In the present paper, 
we use (\ref{g4}) only in the case when $u$ is smooth, 
and so the regularity does not 
cause any problem in the derivation of (\ref{g4}). 
However, we note that our regularity assumption 
is sufficient to obtain (\ref{g4}) rigorously by using the properties 
for $\psi$ and $A$ shown below. 
Checking this fact is left to the reader (see also \cite{BIKT2,S} for the related problem). 
\bigskip\par
Now we observe that we can retake $\{v,w\}$ such that $A$ satisfies the \textit{Coulomb gauge} condition:

\begin{prop}\label{EP1}
There exist $v,w:\R^2\to \R^3$ satisfying the following properties:\\
(i) $v-\vec k_1, w- \vec k_2 \in C_t(L^1 + L^r) \cap C_t\dot{H}^1 \cap C_t\dot{H}^{\lfloor s \rfloor}$ for all $r\in (2,\infty)$, $\vec k_1, \vec k_2 \in \bbs^2$. \\
(ii) $\{u(x,t), v(x,t),w(x,t)\}$ is a positively-oriented orthonormal basis on $\R^3$ for each $(x,t)\in \R^2\times [0,T]$.\\
(iii) The following relation holds: 
\begin{equation}\label{Ea2}
\rd_1 A_1 +\rd_2 A_2 = 0.
\end{equation}
\end{prop}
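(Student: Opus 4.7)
The plan is to adjust the frame $\{u, v', w'\}$ produced by Lemma~\ref{EL05} (where we use primes to distinguish it from the Coulomb frame to be constructed) by a pointwise rotation in the tangent plane $u^\perp$. Setting $A'_m := \rd_m v' \cdot w'$, an arbitrary positively-oriented orthonormal frame for $u^{-1}T\bbs^2$ has the form
\begin{equation*}
v = \cos(\chi)\, v' + \sin(\chi)\, w', \qquad w = -\sin(\chi)\, v' + \cos(\chi)\, w',
\end{equation*}
for some real-valued scalar $\chi(x,t)$. A direct computation gives $A_m := \rd_m v \cdot w = A'_m + \rd_m \chi$, so the Coulomb gauge condition (\ref{Ea2}) becomes the Poisson equation
\begin{equation*}
\Del \chi = -\rd_1 A'_1 - \rd_2 A'_2,
\end{equation*}
which we solve at each fixed $t \in [0,T]$.

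By Lemma~\ref{EL05}, $v' - \vec k_1$ and $w' - \vec k_2$ lie in $C_t H^{\lfloor s \rfloor}$, so Sobolev multiplication (valid since $s > 2$) gives $A'_m \in C_t H^{\lfloor s \rfloor - 1}$, and Lemma~\ref{EL05}(iii) gives $A'_m \in C_t L^1$. Since the right-hand side of the Poisson equation is the divergence of an $L^1 \cap L^2$ vector field, it integrates to zero, so the solution admits the representation
\begin{equation*}
\chi(x,t) := \frac{1}{2\pi} \int_{\R^2} \frac{x-y}{|x-y|^2} \cdot A'(y,t)\, dy,
\end{equation*}
equivalently $\rd_j \chi = -\sum_l R_j R_l A'_l$ via Riesz transforms. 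The $H^k$-boundedness of $R_j R_l$ yields $\nab \chi \in C_t H^{\lfloor s \rfloor - 1}$, while the integral formula together with $A' \in L^1$ shows that $\chi$ is continuous and decays to zero as $|x| \to \infty$.

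With $v, w$ defined by the above rotation, property (ii) is immediate, and (iii) holds by construction. For (i) we decompose
\begin{equation*}
v - \vec k_1 = \cos(\chi)(v' - \vec k_1) + \sin(\chi)(w' - \vec k_2) + (\cos\chi - 1)\vec k_1 + \sin\chi\,\vec k_2,
\end{equation*}
(and analogously for $w - \vec k_2$) and estimate term-by-term: the $\dot H^1 \cap \dot H^{\lfloor s \rfloor}$ bounds follow from the Leibniz rule together with the regularity of $v' - \vec k_1$, $w' - \vec k_2$, and $\nab \chi$, while the $L^1 + L^r$ part is handled using $|\cos\chi - 1|, |\sin\chi| \le |\chi|$ together with the pointwise decay of $\chi$. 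The main obstacle is the two-dimensional character of the problem: the Newtonian kernel $\frac{1}{2\pi}\log|x|$ grows at infinity, so naively inverting the Laplacian could produce an unbounded $\chi$. This is resolved precisely by the $L^1$ integrability of $A'_m$ from Lemma~\ref{EL05}(iii) and the mean-zero property of the source, which make the convolution representation above converge and decay.
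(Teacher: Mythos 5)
Your construction is the same as the paper's: rotate the frame of Lemma \ref{EL05} through an angle $\chi$, note $A_m=A'_m+\rd_m\chi$, and enforce (\ref{Ea2}) by solving $\Del\chi=-\rd_1A'_1-\rd_2A'_2$ at each time, using the $L^1\cap H^{\lfloor s\rfloor-1}$ control of $A'$ supplied by Lemma \ref{EL05}; the paper writes the solution on the Fourier side, $\chi=(2\pi)^{-2}\sum_m\caf^{-1}\bigl[|\xi|^{-1}\caf(R_mA'_m)\bigr]$, and records the quantitative bound $\chi\in C_t(L^2+L^r)$, $2<r\le\infty$, before running the same term-by-term regularity check for $v-\vec k_1$, $w-\vec k_2$.

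The one step that does not hold as written is your justification of the $L^1+L^r$ membership: for the terms $(\cos\chi-1)\vec k_1$ and $(\sin\chi)\vec k_2$ you invoke only ``pointwise decay of $\chi$'', but decay at infinity gives no $L^p$ information at all, and these terms lie in $L^1+L^r$ only if $\chi(t)$ itself has an integrability bound, e.g.\ $\chi(t)\in L^2+L^r$ or $\chi(t)\in L^q$ for some finite $q$. Fortunately this follows from what you already have: your kernel $\frac{1}{2\pi}\frac{x}{|x|^2}$ belongs to weak $L^2$, so the weak Young (Hardy--Littlewood--Sobolev) inequality applied to $A'(t)\in L^1\cap L^\infty\subset L^p$, $1<p<2$, gives $\chi(t)\in L^q$ for every $q>2$; equivalently one can split the symbol $|\xi|^{-2}\,i\xi\cdot\caf A'$ into low and high frequencies exactly as in the proof of Lemma \ref{EL1}. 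Continuity in $t$ comes from the same bounds applied to differences. Two further small points: the remark that the source ``integrates to zero'' is not what validates the convolution representation --- what you actually use is an integration by parts moving the derivative from $\operatorname{div}A'$ onto the logarithmic kernel, which replaces the growing kernel by the $O(|x|^{-1})$ one; and with the paper's convention for $R_m$ the correct identity is $\rd_j\chi=+\sum_l R_jR_lA'_l$ (your sign is off, though this is harmless). With the $L^q$ bound on $\chi$ in place, your decomposition of $v-\vec k_1$ and the Leibniz-rule estimates for the $\dot H^1\cap\dot H^{\lfloor s\rfloor}$ part go through exactly as in the paper.
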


\begin{proof}
Let $\{ \tilde{v}, \tilde{w}\}$ be an orthonormal frame as in Lemma \ref{EL05}. 
Then, any other orthonormal frame $\{ v,w\}$ can be written as
\begin{equation}\label{Ea3}
\begin{pmatrix}
v\\
w
\end{pmatrix}
=
\begin{pmatrix}
\cos \chi & \sin \chi \\
-\sin \chi & \cos \chi
\end{pmatrix}
\begin{pmatrix}
\tilde{v}\\
\tilde{w}
\end{pmatrix}
\end{equation}
for $\chi : \R^2\times [0,T]\to \R$. 
It follows that $\{ v,  w\}$ satisfies (\ref{Ea2}) if and only if
\begin{equation}\label{az1}
\Del \chi = -\rd_1 \tilde{A}_1 - \rd_2 \tilde{A}_2
\end{equation}
where $\tilde{A}_m$ is the connection coefficient corresponding to $\{ \tilde{v}, \tilde{w}\}$. 
Since $\tilde{A}\in C_t(L^1\cap L^2)$, 
a solution to (\ref{az1}) can be explicitly given by
\begin{equation*}
\chi = (2\pi)^{-2} \sum_{m=1}^2 \caf^{-1} \left[ \frac{1}{|\xi|} \caf \left( R_m \tilde{A}_m \right) \right]
\in C_t(L^2 + L^{r}), \quad (2<r\le \infty),
\end{equation*}
where $R_m = (2\pi)^{-1} \caf^{-1} \frac{i\xi_m}{|\xi|} \caf$ is the Riesz operator with respect to $m$-th index. (The integrability is shown by the same argument as the proof of Lemma \ref{EL05}.)\bigskip\par
We next check the regularity for $v-\vec k_1$. (The case for $w- \vec k_2$ is similar.) 
We first write
\begin{equation*}
v-\vec k_1 = (\cos \chi -1) v + (\tilde{v}-\vec k_1) + (\sin \chi) w,
\end{equation*}
which implies $v- \vec k_1 \in C_t(L^1 + L^r)$. 
Since 
\begin{equation*}
\rd_m v = (\cos \chi) \rd_m \tilde{v} + (\sin \chi) \rd_m \tilde{w} 
- \rd_m \chi (\sin \chi) v + \rd_m \chi (\cos \chi) w,
\end{equation*}
we have $\rd_m v \in C_tL^2$. 
The higher regularities can be similarly shown.\qed
\end{proof}


\begin{prop}\label{EP2}
Under the Coulomb gauge condition,
the following properties hold true:\\
(i) $\psi \in C_tH^{s-1}$.\\
(ii) $A\in C_t(\dot{H}^1\cap \dot{H}^s)$.\\
(iii) 
$
\begin{pmatrix}
A_1\\
A_2
\end{pmatrix}
=
(2\pi)^{-2}
\caf^{-1}
\begin{pmatrix}
 |\xi|^{-1} \caf ( R_2 \im (\psi_1\ovl{\psi_2}) )  \\
 |\xi|^{-1} \caf ( R_1 \im (\psi_2\ovl{\psi_1}) ) 
\end{pmatrix}
$.
\end{prop}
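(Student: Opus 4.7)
The plan is to prove (i) directly from the definition of $\psi$, derive (iii) from the curvature identity (\ref{g2}) together with the Coulomb gauge (\ref{Ea2}), and deduce (ii) from the formula in (iii).

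For (i), we have $\psi_m = \rd_m u \cdot v + i \rd_m u \cdot w$. Decomposing $v = \vec{k}_1 + (v-\vec{k}_1)$ and $w = \vec{k}_2 + (w-\vec{k}_2)$ splits $\psi_m$ into a linear part $\rd_m u \cdot \vec{k}_1 + i \rd_m u \cdot \vec{k}_2$ that lies in $H^{s-1}$ at once because $u - \vec{k} \in H^s$, plus bilinear terms of the form $\rd_m u \cdot (v - \vec{k}_1)$. By Proposition \ref{EP1}, $v - \vec{k}_1$ and $w - \vec{k}_2$ belong to $L^\infty \cap \dot{H}^{\lfloor s \rfloor}$, while $\rd_m u \in H^{s-1} \subset L^\infty$ because $s > 2$. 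Moser-type product estimates in $H^{s-1}$ then close the bilinear pieces, and continuity in $t$ is inherited from that of $u, v, w$ through these multilinear bounds.

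For (iii), the curvature identity (\ref{g2}) restricted to spatial indices gives $\rd_1 A_2 - \rd_2 A_1 = \im (\psi_1 \ovl{\psi_2})$, while (\ref{Ea2}) supplies $\rd_1 A_1 + \rd_2 A_2 = 0$. Differentiating the first relation in $\rd_1$ and using the Coulomb gauge to replace $\rd_1 \rd_2 A_1 = -\rd_2^2 A_2$ yields $\Del A_2 = \rd_1 \im (\psi_1 \ovl{\psi_2})$; a symmetric manipulation, based on (\ref{g2}) with the indices swapped, gives $\Del A_1 = \rd_2 \im (\psi_2 \ovl{\psi_1})$. Inverting the Laplacian in Fourier and regrouping the multipliers into $|\nabla|^{-1} R_m$ produces the stated formula. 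To identify this Fourier inversion with the original $A$, one needs $A \in \dot{H}^1$, so that any ambiguity (a harmonic function lying in $\dot{H}^1$) vanishes; this decay follows directly from the frame regularity in Proposition \ref{EP1} via $A_m = \rd_m v \cdot w$ combined with the same product argument used for (i).

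Given (iii), statement (ii) reduces to Sobolev product estimates on $\psi$. The $\dot{H}^s$-bound reads $\||\nabla|^s A_m\|_{L^2} \le C \|\im (\psi_l \ovl{\psi_k})\|_{H^{s-1}} \le C \|\psi\|_{H^{s-1}}^2$, using that $H^{s-1}$ is a Banach algebra in two dimensions when $s - 1 > 1$. The $\dot{H}^1$-bound reads $\||\nabla| A_m\|_{L^2} \le C \|\im (\psi_l \ovl{\psi_k})\|_{L^2} \le C \|\psi\|_{L^\infty} \|\psi\|_{L^2}$, which is finite because $\psi \in H^{s-1} \hookrightarrow L^\infty$ in two dimensions. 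Time continuity is transferred from (i) by the boundedness of the Fourier multipliers involved.

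The main obstacle is the two-dimensional pathology that $|\nabla|^{-1}$ does not preserve $L^2$: the low-frequency behavior prevents $A$ itself from belonging to $L^2$, so (ii) must be phrased purely in homogeneous Sobolev norms. This is also why the \emph{a priori} check that $A \in \dot{H}^1$ is needed in (iii): without it the Fourier-side inversion is ambiguous up to a harmonic function and cannot be pinned down as the genuine connection coefficient $\rd_m v \cdot w$.
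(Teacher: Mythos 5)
Your argument follows the paper's proof essentially verbatim: (i) from Proposition \ref{EP1} and the Leibniz rule, (iii) by combining the curvature identity (\ref{g2}) with the Coulomb gauge (\ref{Ea2}) to obtain $\Del A_1 = \rd_2 \im(\psi_2\ovl{\psi_1})$ (and symmetrically for $A_2$) and then inverting the Laplacian in Fourier, and (ii) deduced from (i) and (iii) via product estimates. The only slight imprecision is the claim that a harmonic function lying in $\dot H^1$ must vanish---nonzero constants are harmonic with zero $\dot H^1$-seminorm---but this ambiguity is removed because $A_m=\rd_m v\cdot w$ is qualitatively in $L^2$ (Proposition \ref{EP1} gives $\nab v\in L^2$ and $|w|=1$) while the Fourier-side expression lies in $L^2+L^r$, so your identification of the inversion with the genuine connection coefficient is sound.
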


\begin{proof}
(i) follows from Proposition \ref{EP1} (i) and the Leibniz rule. 
By (\ref{g2}), 
we have
\begin{equation*}
\Del A_1 = \rd_2 \im (\psi_2 \ovl{\psi_1}),
\end{equation*}
where we used (\ref{Ea2}). This leads to (iii). The case for $A_2$ is similar. 
(ii) follows from (i) and (iii).\qed
\end{proof}

We now show that $\psi$ and the gradient for $(u,v,w)$ are equivalent in the quantitative sense:

\begin{prop}\label{EP3}
Let $u,v,w$ as above. 
Then, for $\sig > 0$, there exists a polynomial $P= P_{\lfloor \sig \rfloor}$ such that the followings hold for all $t\in [0,T]$.\\
\begin{equation}\label{ay1}
\nor{\nab u }{H^\sig} + \nor{\nab v}{\dot{H}^\sig} + \nor{\nab w}{\dot{H}^\sig}
  \le \nor{\psi}{H^{\sig}} P (\nor{\psi}{H^{\sig}}) \text{ if } \sig \le 1,
\end{equation}
\begin{equation}\label{ay2}
\nor{\nab u }{H^\sig} + \nor{\nab v}{\dot{H}^\sig} + \nor{\nab w}{\dot{H}^\sig}
  \le \nor{\psi}{H^{\sig}} P (\nor{\psi}{H^{\sig-1}}) \text{ if } \sig > 1,
\end{equation}
\begin{equation}\label{ay3}
\nor{ \psi }{H^\sig}  \le \nor{\nab u}{H^{\sig}} P (\nor{\nab u}{H^{\sig}}) \text{ if } \sig \le 1,
\end{equation}
\begin{equation}\label{ay4}
\nor{ \psi }{H^\sig}  \le \nor{\nab u}{H^{\sig}} P (\nor{\nab u}{H^{\sig-1}}) \text{ if } \sig > 1.
\end{equation}
%
%
%
%
%
\end{prop}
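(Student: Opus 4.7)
The proof is a quantitative bookkeeping exercise built on three ingredients already at hand: the pointwise orthogonal decomposition $\rd_m u = (\re \psi_m) v + (\im \psi_m) w$ coming from $\rd_m u \perp u$ together with $\{v,w\}$ being an orthonormal basis of $u^{\perp}$, the algebraic relation \eqref{Ea1} which reconstructs $\rd_m v, \rd_m w$ linearly from $(\psi_m, A_m)$, and the integral representation from Proposition \ref{EP2}(iii) which exhibits $A$ as a Riesz-type operator of order $-1$ applied to quadratic expressions in $\psi$. The plan is to run two estimates (forward and backward) in parallel, with an auxiliary bound on $A$ feeding into both.

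For the forward direction \eqref{ay1}--\eqref{ay2}, I would first observe that $|\rd_m u| = |\psi_m|$ pointwise, which handles $\sigma = 0$ exactly. For $\sigma > 0$, I would apply the fractional Leibniz rule to the decomposition of $\rd_m u$ and to the analogous decompositions $\rd_m v = -(\re\psi_m) u + A_m w$ and $\rd_m w = -(\im\psi_m) u - A_m v$, measuring $u-\vec k, v-\vec k_1, w-\vec k_2$ in $\dot H^\sigma$ by bootstrapping via their own gradients. The magnetic potential $A$ is treated separately: Proposition \ref{EP2}(iii) yields $\||\nabla| A\|_{\dot H^\sigma} \lesssim \||\psi|^2\|_{\dot H^\sigma}$, which by Kato–Ponce is controlled by $\|\psi\|_{\dot H^\sigma} \|\psi\|_{L^\infty \cap L^r}$ for a suitable $r$.

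The dichotomy between \eqref{ay1} and \eqref{ay2} reflects the availability of an $L^\infty$ bound on $\psi$. For $\sigma \le 1$ the Sobolev embedding in dimension two does not place $H^\sigma$ into $L^\infty$, so each product estimate contributes a further factor of $\|\psi\|_{H^\sigma}$ and the result is polynomial in this single norm. For $\sigma > 1$ the embedding $H^{1+\epsilon}(\R^2)\hookrightarrow L^\infty$ gives $\|\psi\|_{L^\infty}\le C\|\psi\|_{H^{\sigma-1}}$, so a Moser-type tame estimate isolates the highest derivative linearly and pushes all remaining factors into $\|\psi\|_{H^{\sigma-1}}$, yielding the improved dependence $P(\|\psi\|_{H^{\sigma-1}})$. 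The reverse bounds \eqref{ay3}--\eqref{ay4} run symmetrically from $\psi_m = \rd_m u\cdot v + i\,\rd_m u\cdot w$: $(v,w)$ inherit their Sobolev regularity from $\psi$ and hence, by the forward bound just established at lower order, from $\nabla u$. The whole scheme closes by induction on $\lfloor \sigma \rfloor$.

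The principal obstacle will be disciplined handling of the nonlocal term $A$. Although $|\nabla|^{-1}$ applied to a quadratic expression is formally of order $-1$, one must verify that in \eqref{ay2} only one factor of $\psi$ ever carries the top derivative; otherwise the polynomial degree of $P$ would grow uncontrollably with $\sigma$. This requires commuting the Riesz-type operator of Proposition \ref{EP2}(iii) with $|\nabla|^\sigma$, invoking Kato–Ponce on the quadratic source, and exploiting the $L^\infty$ gain from $H^{1+\epsilon}\hookrightarrow L^\infty$ to keep the non-top factor in $H^{\sigma-1}$. Once this is set up cleanly, the induction on $\lfloor\sigma\rfloor$ is mechanical, and the statement follows by matching the resulting polynomial against the exponent $\sigma$ in each regime.
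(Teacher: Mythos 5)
Your overall architecture coincides with the paper's: the frame identities, the representation of $A$ from Proposition \ref{EP2} (iii) (quantified as in Lemma \ref{EL1}), fractional Leibniz estimates, and an induction on $\lfloor\sig\rfloor$ with \eqref{ay1}, \eqref{ay3} as base cases. However, the mechanism you give for the crucial dichotomy in \eqref{ay2}, \eqref{ay4} contains a genuine gap: you claim that for $\sig>1$ the embedding $H^{1+\ep}(\R^2)\hookrightarrow L^\infty$ yields $\nor{\psi}{L^\infty}\le C\nor{\psi}{H^{\sig-1}}$. In two dimensions this requires $\sig-1>1$, i.e.\ $\sig>2$; it is false for $\sig\in(1,2]$, which is exactly the range used later in the paper (Proposition \ref{Pa1} with $s>2$ invokes these estimates at levels $s-1$, $s-\tfrac12$, etc., lying in $(1,2)$). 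The same problem infects the "Moser-type tame estimate'' step: a Kato--Ponce bound with $L^\infty$ endpoints on the lower-order factor (e.g.\ $\nor{\rd v}{L^\infty}$ or $\nor{\psi}{L^\infty}$ measured at level $\sig-1$) is simply not available when $1<\sig\le 2$, so your argument does not close there and would only deliver $P(\nor{\psi}{H^\sig})$ instead of $P(\nor{\psi}{H^{\sig-1}})$.

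The paper's proof gets the improved dependence differently: choosing $\ep\in(0,\min\{1,\sig-1\})$, it distributes the Leibniz rule over the pairs $L^{2/\ep}$--$L^{2/(1-\ep)}$ rather than $L^2$--$L^\infty$, so that terms like $\nor{\rd\psi}{L^{2/(1-\ep)}}\nor{|\nab|^{\sig-1}v}{L^{2/\ep}}$ and $\nor{|\nab|^{\sig-1}\psi}{L^{2/\ep}}\nor{\rd v}{L^{2/(1-\ep)}}$ are controlled by $\nor{\psi}{H^\sig}\,I_{\sig-1}$ with $I_{\sig-1}:=\nor{\nab u}{H^{\sig-1}}+\nor{\nab v}{H^{\sig-1}}+\nor{\nab w}{H^{\sig-1}}$; the only $L^\infty$ norm that appears is $\nor{\psi}{L^\infty}\le C\nor{\psi}{H^\sig}$, which is harmless because it plays the role of the single linear top-order factor, multiplied by $I_{\sig-1}$. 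The recursion $I_\sig\le C\nor{\psi}{H^\sig}(I_{\sig-1}+\nor{\psi}{H^{\sig-1}})^2$ then closes by induction from \eqref{ay1}, and \eqref{ay4} is handled analogously. To repair your write-up you should replace the $H^{\sig-1}\hookrightarrow L^\infty$ step by this $\ep$-splitting (or an equivalent Besov/paraproduct argument). A secondary caution: your treatment of $A$ must avoid its $L^2$ norm altogether (only $L^r$, $r>2$, and homogeneous Sobolev norms are controlled, cf.\ Remark \ref{rm1} and the exponent restrictions in Lemma \ref{EL1}, which is also why the paper splits $\sig\in(0,1/2]$ and $\sig\in(1/2,1)$ in the proof of \eqref{ay1}); your sketch is consistent with this but does not acknowledge the constraint.
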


First, we observe the following lemma.

\begin{lem}\label{EL1}
(i) For $r_1, r_2\in (2,\infty)$ with $2/r_2-1/2 \in [0,1/r_1)$, 
we have
\begin{equation*}
\nor{A}{L^{r_1}} \le C( \nor{\psi}{L^2}^2 + \nor{\psi}{L^{r_2}}^2 ) .
\end{equation*}
Especially, the above inequality holds for $(r_1,r_2)=(r,4)$ for $r\in (2,\infty)$.\\
(ii) For $\sig>0$ and $r\in (2,\infty)$ with $4/r + \sig <2$, we have 
\begin{equation*}
\nor{A}{\dot{H}^{\sig}} \le C ( \nor{\psi}{L^2}^2 + \nor{\psi}{L^{r}}^2 ) .
\end{equation*}
\end{lem}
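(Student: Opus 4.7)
Both parts start from the representation given in Proposition \ref{EP2}(iii), in which each component of $A$ is of the form $|\nab|^{-1} R_k \, \im (\psi_l \ovl{\psi_m})$ (up to constants) for appropriate indices $k,l,m$. Thus $A$ is (up to composition with bounded Riesz transforms) the Riesz potential of order one applied to a bilinear form in $\psi$, and both inequalities reduce to controlling $|\nab|^{-1}$ or $|\nab|^{\sig - 1}$ applied to $\psi_l\ovl{\psi_m}$.

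For (i), I would first use the $L^{r_1}$-boundedness of the Riesz transforms to discard the factor $R_k$, and then apply the Hardy--Littlewood--Sobolev inequality on $\R^2$: $|\nab|^{-1}$ maps $L^q \to L^{r_1}$ whenever $1/q - 1/r_1 = 1/2$ with $q\in (1,2)$, which is exactly the hypothesis $r_1 > 2$. Setting $q := 2r_1/(r_1+2)$ and using H\"older, this gives
\begin{equation*}
\nor{A}{L^{r_1}_x} \le C \nor{\psi_l \ovl{\psi_m}}{L^q_x} \le C \nor{\psi}{L^{2q}_x}^2
\end{equation*}
with $2q = 4r_1/(r_1+2)\in(2,4)$. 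The assumption $2/r_2-1/2 < 1/r_1$ rewrites as $r_2 > 2q$, so the interpolation $L^2 \cap L^{r_2}\hookrightarrow L^{2q}$ is available and supplies $\nor{\psi}{L^{2q}_x} \le C(\nor{\psi}{L^2_x}+\nor{\psi}{L^{r_2}_x})$. The special case $r_2=4$ is then covered automatically since $2q<4$.

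For (ii), I would instead pass to the Fourier side: using Plancherel and the pointwise bound $|\xi_k|/|\xi|^2 \le |\xi|^{-1}$ on the multipliers representing $|\nab|^{-1}R_k$,
\begin{equation*}
\nor{A}{\dot{H}^{\sig}_x} \le C \nor{|\nab|^{\sig - 1}(\psi_l \ovl{\psi_m})}{L^2_x}.
\end{equation*}
For $\sig\in(0,1)$ this is again a Riesz potential, now of order $1-\sig$, and HLS yields $\nor{|\nab|^{\sig - 1} F}{L^2_x} \le C\nor{F}{L^{p}_x}$ with $p=2/(2-\sig)\in(1,2)$. Applying H\"older and then interpolation gives
\begin{equation*}
\nor{A}{\dot{H}^{\sig}_x} \le C\nor{\psi}{L^{2p}_x}^2 \le C(\nor{\psi}{L^2_x}^2 + \nor{\psi}{L^r_x}^2),
\end{equation*}
since $2p = 4/(2-\sig)$ and the condition $4/r + \sig < 2$ is exactly $4/(2-\sig)<r$. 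The boundary case $\sig=1$ is handled directly via Plancherel, $\nor{A}{\dot{H}^1_x} \le C\nor{\psi_l\ovl{\psi_m}}{L^2_x}\le C\nor{\psi}{L^4_x}^2$, which is again compatible with the hypothesis $4/r + 1 < 2$.

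The only technical point is the careful bookkeeping of the exponents, verifying that the HLS, H\"older, and interpolation hypotheses are simultaneously consistent with the stated assumptions on $r_1,r_2,r,\sig$; this is elementary arithmetic once the representation from Proposition \ref{EP2}(iii) is in hand, so I expect no real obstacle beyond routine computation.
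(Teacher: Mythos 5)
Your proposal is correct, but it takes a genuinely different route from the paper. The paper proves both parts by splitting $A$ into low and high frequencies and working on the Fourier side: the low-frequency piece is bounded via $\nor{|\xi|^{-1}\chi_{|\xi|\le 1}}{L^{r_1'}}\nor{\caf \im(\psi_2\ovl{\psi_1})}{L^\infty}\le C\nor{\psi}{L^2}^2$, and the high-frequency piece via the embedding $\dot H^{1-2/r_1}\hookrightarrow L^{r_1}$ (in (i)), H\"older in $\xi$, and Hausdorff--Young, which yields $\nor{\psi}{L^{r_2}}^2$ (resp.\ $\nor{\psi}{L^r}^2$); thus the two terms on the right-hand side come directly from the two frequency regimes and no interpolation of Lebesgue norms of $\psi$ is needed. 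You instead keep $A$ as a Riesz potential of order one of the bilinear density, use Hardy--Littlewood--Sobolev together with $L^{r_1}$-boundedness of the Riesz transform (resp.\ Plancherel with the pointwise multiplier bound in (ii)), then H\"older and Lebesgue interpolation of $L^{2q}$ between $L^2$ and $L^{r_2}$ (resp.\ $L^r$); your exponent arithmetic is consistent with the hypotheses, since $r_2>2q=4r_1/(r_1+2)$ is exactly $2/r_2-1/2<1/r_1$ and $4/(2-\sig)<r$ is exactly $4/r+\sig<2$. A small bonus of your route in (ii): it handles $r>4$ (with $\sig<1$) seamlessly, a case the paper actually invokes (e.g.\ $r=2/(1-\sig)$ for $\sig\in(1/2,1)$ in the proof of Proposition \ref{EP3}), whereas the paper's displayed H\"older exponent $2r/(4-r)$ and its Hausdorff--Young step only parse for $r\le 4$ and must there be read as ``replace $r$ by $4$ and interpolate.'' Conversely, like the paper's argument, yours only covers $\sig\le 1$ in (ii), even though the hypothesis $4/r+\sig<2$ formally admits $\sig\in(1,2)$ when $r>4$; that range is never used in the paper (and could not be reached by bounds involving only Lebesgue norms of $\psi$), so this is not a defect of your proposal relative to the paper's own proof.
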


\begin{proof}
We only see the estimate for $A_1$. 
(i) Dividing $A_1$ into low and high frequencies, we have
\begin{equation}\label{Ed1}
\begin{aligned}
\nor{\caf^{-1} ( \chi_{|\xi| \le 1} \caf A_1 )}{L^{r_1}}
&\le C
\nor{  \chi_{|\xi| \le 1} |\xi|^{-1} \caf \im (\psi_2 \ovl{\psi_1}) }{L^{r_1'}}\\
&\le C
\nor{|\xi|^{-1}\chi_{|\xi|\le 1}}{L^{r_1'}}
\nor{  \caf \im (\psi_2 \ovl{\psi_1}) }{L^{\infty}}
\le C
\nor{  \psi }{L^2}^2,
\end{aligned}
\end{equation}
\begin{equation*}
\begin{aligned}
\nor{\caf^{-1} ( \chi_{|\xi| > 1} \caf A_1 )}{L^{r_1}}
&\le C
\nor{\caf^{-1} ( |\xi|^{1-2/r_1} \chi_{|\xi| > 1} \caf A_1 )}{L^2}
\\
&\le C
\nor{  |\xi|^{-2/r_1} \chi_{|\xi| > 1} \caf \im (\psi_2 \ovl{\psi_1}) }{L^{2}}\\
&\le C \nor{|\xi|^{-2/r_1} \chi_{|\xi|>1} }{L^{\frac{2r_2}{4-r_2}}} 
\nor{\caf \im (\psi_2 \ovl{\psi_1})}{L^{\frac{r_2}{r_2-2}}}\\
&\le C
\nor{  \psi }{L^{r_2}}^2,
\end{aligned}
\end{equation*}
which completes the proof.\par
(ii) We divide $A_1$ into low and high frequencies again. Then we have
\begin{equation*}
\begin{aligned}
\nor{\caf^{-1} ( \chi_{|\xi| \le 1} \caf |\nab|^\sig A_1 )}{L^2}
&\le C
\nor{  \chi_{|\xi| \le 1} |\xi|^{-1+\sig} \caf \im (\psi_2 \ovl{\psi_1}) }{L^2}\\
&\le C
\nor{|\xi|^{-1+\sig}\chi_{|\xi|\le 1}}{L^2}
\nor{  \caf \im (\psi_2 \ovl{\psi_1}) }{L^{\infty}}
\le C
\nor{  \psi }{L^2}^2,
\end{aligned}
\end{equation*}
\begin{equation*}
\begin{aligned}
\nor{\caf^{-1} ( \chi_{|\xi| > 1} \caf |\nab|^\sig A_1 )}{L^2}
&\le C
\nor{  |\xi|^{-1+\sig} \chi_{|\xi| > 1} \caf \im (\psi_2 \ovl{\psi_1}) }{L^{2}}\\
&\le C \nor{|\xi|^{-1+\sig} \chi_{|\xi|>1} }{L^{\frac{2r}{4-r}}} 
\nor{\caf \im (\psi_2 \ovl{\psi_1})}{L^{\frac{r}{r-2}}}\\
&\le C
\nor{  \psi }{L^{r}}^2,
\end{aligned}
\end{equation*}
which completes the proof.\qed
\end{proof}

\begin{rmk}\label{rm1}
We note that $\nor{A}{L^2_x}$ cannot be controlled due to the low frequency part of  $A$. Indeed, $\nor{|\xi|^{-1}}{L^{r_1'}}$ in (\ref{Ed1}) is infinite when $r_1=2$. This kind of obstacle does not appear when the spatial dimension is higher; it is specific problem to $2D$-case. 
This is an essential obstacle of the Coulomb gauge, which causes problems 
in the analysis of Schr\"odinger maps in critical regularities 
(see \cite{BIK,BIKT} for the detailed discussions).
%
\end{rmk}

\textit{Proof of Proposition \ref{EP3}.} 
We first prove (\ref{ay1}). 
For the control of $\nor{\nab u}{H^\sig}$, we divide the case into 
$\sig\in (0,1/2]$, 
$\sig\in (1/2,1)$ and $\sig= 1$. 
When $\sig\in (0,1/2]$, 
the Leibniz rule gives 
\begin{equation}\label{Ed2}
\begin{aligned}
\nor{\nab u}{\dot{H}^\sig} &\le C( 
\nor{\psi}{\dot{H}^{\sig}} + \nor{\psi}{L^{4/(2-\sig)}} \nor{ |\nab |^\sig v}{L^{4/\sig}})\\
&\le C(
\nor{\psi}{\dot{H}^{\sig}} + \nor{\psi}{\dot{H}^{\sig /2}} \nor{ \nab v}{L^{4/(2-\sig)}})\\
&\le C
\nor{\psi}{\dot{H}^{\sig}} + C \nor{\psi}{\dot{H}^{\sig/2}} (\nor{ \psi}{L^{4/(2-\sig)}} + 
\nor{ A}{L^{4/(2-\sig)}})\\
&\le C
\nor{\psi}{\dot{H}^{\sig}} + C \nor{\psi}{\dot{H}^{\sig/2}} (\nor{ \psi}{\dot{H}^{\sig/2}} + 
\nor{ \psi}{L^2}^2 + \nor{\psi}{L^{2/(1-\sig)}}^2)\\
&\le C
\nor{\psi}{H^\sig} ( 1+ \nor{\psi}{H^\sig}^2),
\end{aligned}
\end{equation}
where the condition $\sig\le 1/2$ is used to ensure the necessary condition of Lemma \ref{EL1} (i). 
When $\sig\in (1/2, 1)$, we have
\begin{equation*}
\begin{aligned}
\nor{\nab u}{\dot{H}^\sig}
&\le C
\nor{\psi}{\dot{H}^{\sig}} + C \nor{\psi}{\dot{H}^{\sig/2}} (\nor{ \psi}{L^{4/(2-\sig)}} + 
\nor{ A}{L^{4/(2-\sig)}})\\
&\le C
\nor{\psi}{\dot{H}^{\sig}} + C \nor{\psi}{\dot{H}^{\sig/2}} (\nor{ \psi}{\dot{H}^{\sig/2}} + 
\nor{ \psi}{L^2}^2 + \nor{\psi}{L^{4}}^2)\\
&\le C
\nor{\psi}{H^\sig} ( 1+ \nor{\psi}{H^\sig}^2),
\end{aligned}
\end{equation*}
When $\sig =1$, it suffices to control
\begin{equation*}
\nor{\rd \psi \cdot v}{L^2},\quad \nor{\psi \cdot \rd v}{L^2}
\end{equation*}
for $\rd = \rd_1 ,\rd_2$. 
The former is controlled by 
$
\nor{\psi}{H^1}
$. 
The latter is estimated as follows:
\begin{equation*}
\begin{aligned}
\nor{\psi \cdot \rd v}{L^2} \le \nor{\psi}{L^4} \nor{\rd v}{L^4}
&\le C
\nor{\psi}{L^4} (\nor{\psi}{L^4} + \nor{A}{L^4} )\\
&\le C
\nor{\psi}{L^4} (\nor{\psi}{L^2}^2 + \nor{\psi}{L^4} + \nor{\psi}{L^4}^2),
\end{aligned}
\end{equation*}
where we used (\ref{Ea1}) and Lemma \ref{EL1}. 
Hence (\ref{ay1}) follows. \bigskip\par
We next prove the estimate for $v$, while that for $w$ is shown in the same way. 
It suffices to control
\begin{equation*}
\nor{|\nab|^\sig \psi \cdot u}{L^2}, \quad
\nor{\psi \cdot |\nab|^\sig u}{L^2}, \quad
\nor{|\nab|^\sig A \cdot w}{L^2}, \quad
\nor{A \cdot |\nab|^\sig  w}{L^2}.
\end{equation*}
We first see the case when $\sig\in (0,1)$. 
The first one is obviously bounded by $\nor{\psi}{\dot{H}^\sig}$. 
For the second quantity, we have
\begin{equation*}
\nor{\psi \cdot |\nab|^\sig u}{L^2} \le 
\nor{\psi}{L^{2/(1-\sig)}} \nor{|\nab|^\sig u}{L^{2/\sig}}
\le C \nor{\psi}{\dot{H}^\sig} \nor{\nab u}{L^2}
\le C \nor{\psi}{H^\sig}^2.
\end{equation*}
The third part is bounded by
\begin{equation*}
\nor{|\nab|^\sig A}{L^2}\le C( \nor{\psi}{L^2}^2 + \nor{\psi}{L^{2/(1-\sig)}}^2 )
\le C \nor{\psi}{H^{\sig}}^2,
\end{equation*}
where we used Lemma \ref{EL1} (ii). For the forth quantity, when $\sig\in (0,1/2]$, we have
\begin{equation*}
\begin{aligned}
\nor{A \cdot |\nab|^\sig  w}{L^2}
&\le C
\nor{A}{L^{4/(2-\sig)}} \nor{|\nab|^\sig w}{L^{4/\sig}}\\
&\le C
\left( \nor{\psi}{L^2}^2 + \nor{\psi}{L^{4/(2-\sig)}}^2 \right) 
\left(\nor{ \psi}{\dot{H}^{\sig/2}} + 
\nor{ \psi}{L^2}^2 + \nor{\psi}{L^{2/(2-\sig)}}^2\right)\\
&\le C \nor{\psi}{H^\sig}^3 ( 1+ \nor{\psi}{H^\sig}),
\end{aligned}
\end{equation*}
where we used the estimate for $\nor{\nab w}{L^{4/(2-\sig)}}$ in (\ref{Ed2}). 
When $\sig\in (1/2, 1)$, we have
\begin{equation*}
\begin{aligned}
\nor{A \cdot |\nab|^\sig  w}{L^2}
&\le C
\nor{A}{\dot{H}^{\sig/2}} \nor{\nab w}{L^{4/(2-\sig)}}\\
&\le C
\left( \nor{\psi}{L^2}^2 + \nor{\psi}{L^{4/(2-\sig)}}^2 \right) 
\left(\nor{ \psi}{\dot{H}^{\sig/2}} + 
\nor{ \psi}{L^2}^2 + \nor{\psi}{L^{4}}^2\right)\\
&\le C \nor{\psi}{H^\sig}^3 ( 1+ \nor{\psi}{H^\sig}).
\end{aligned}
\end{equation*}
When $\sig=1$, it suffices to control
\begin{equation*}
\nor{\rd \psi \cdot u}{L^2}, \quad
\nor{\psi \cdot \rd u}{L^2}, \quad
\nor{\rd A \cdot w}{L^2}, \quad
\nor{A \cdot \rd  w}{L^2}
\end{equation*}
for $\rd=\rd_1,\rd_2$. 
The control for the first two quantities is easy. The third term is bounded by
\begin{equation*}
\nor{\rd A}{L^2} \le C
\nor{\psi\cdot \psi}{L^2}= C \nor{\psi}{L^4}^2.
\end{equation*}
For the forth term, we have
\begin{equation*}
\nor{A \cdot \rd  w}{L^2} \le 
\nor{A}{L^4} \nor{\rd w}{L^4}
\le C \left( \nor{\psi}{L^2}^2 + \nor{\psi}{L^4}^2 \right)^2,
\end{equation*}
which leads to the conclusion. \bigskip\par
We next observe (\ref{ay2}). Let 
$
I_\sig := \nor{\nab u }{H^\sig} + \nor{\nab v}{H^\sig} + \nor{\nab w}{H^\sig}.
$ 
Then for $\sig >1$ and for $\rd=\rd_1, \rd_2$, if we take a number $\ep\in (0, \min 
\{1, \sig -1\} )$, we have
\begin{equation*}
\begin{aligned}
\nor{|\nab|^\sig ( \rd u )}{L^2}
&\le C \nor{|\nab|^{\sig-1} ( (\re\, \rd \psi) v + (\re\, \psi) \rd v + i(\re\, \rd \psi) v + i(\re\, \psi) \rd w  )}{L^2} \\
&\le C \nor{\psi}{\dot{H}^{\sig}} 
+ C \nor{\rd \psi}{L^{\frac{2}{1-\ep}}}  \nor{|\nab|^{\sig-1} v}{L^{\frac 2 \ep}}\\
&\qquad + C\nor{ |\nab|^{\sig-1} \psi}{L^{\frac{2}{\ep}}} 
\left( \nor{ \rd v}{L^{\frac{2}{1-\ep}}} 
+\nor{ \rd w}{L^{\frac{2}{1-\ep}}} 
\right)\\
&\qquad + C\nor{ \psi}{L^{\infty}} 
\left( \nor{ |\nab|^{\sig-1}  \rd v}{L^{2}}
+\nor{ |\nab|^{\sig-1}  \rd w}{L^{2}}
\right)
\\
&\le C \nor{\psi}{H^{\sig}} I_{\sig-1},
\end{aligned}
\end{equation*}
\begin{equation*}
\begin{aligned}
\nor{|\nab|^\sig ( \rd v )}{L^2}
&\le C \nor{|\nab|^{\sig-1} (  -(\re\,\rd\psi ) u - (\re\,\psi ) \rd u + (\rd A ) w + A \rd w )}{L^2} \\
&\le C \nor{\psi}{\dot{H}^{\sig}} 
+ C \nor{\rd \psi}{L^{\frac{2}{1-\ep}}}  \nor{|\nab|^{\sig-1} u}{L^{\frac 2 \ep}}\\
&\qquad + C\nor{ |\nab|^{\sig-1} \psi}{L^{\frac{2}{\ep}}}  \nor{ \rd u}{L^{\frac{2}{1-\ep}}}
+ C\nor{ \psi}{L^{\infty}}  \nor{ |\nab|^{\sig-1}  \rd u}{L^{2}}\\
& \qquad +C \nor{|\nab|^{\sig -1} \rd A}{L^2} 
+ C \nor{\rd \psi}{L^{\frac{2}{1-\ep}}}  \nor{|\nab|^{\sig-1} w}{L^{\frac 2 \ep}}\\
& \qquad + C\nor{ |\nab|^{\sig-1} A}{L^{\frac{2}{\ep}}}  \nor{ \rd w}{L^{\frac{2}{1-\ep}}}
+C\nor{A}{L^{\frac{2}{1-\ep}}}  \nor{ |\nab|^{\sig- 1} \rd w}{L^{\frac{2}{\ep}}}\\
&\le C \nor{\psi}{H^{\sig}} ( I_{\sig-1} + \nor{\psi}{H^{\sig-1}} )^2,
\end{aligned}
\end{equation*}
which yields
\begin{equation*}
I_{\sig} \le C \nor{\psi}{H^{\sig}} ( I_{\sig-1} + \nor{\psi}{H^{\sig-1}} )^2.
\end{equation*}
Hence, (\ref{ay2}) follows from (\ref{ay1}) by induction. \bigskip\par
We next prove (\ref{ay3}). 
When $\sig\in (0,1)$, the Leibniz rule yields
\begin{equation*}
\begin{aligned}
\nor{\psi_m}{\dot{H}^\sig}
&\le C
(\nor{\rd_m u}{\dot{H}^\sig} + \nor{\rd_m u}{L^{\frac{4}{2-\sig}}} \nor{|\nab|^\sig v}{L^{4/\sig}})\\
&\le C
(\nor{\rd_m u}{\dot{H}^\sig} + \nor{\rd_m u}{\dot{H}^{2/\sig}} \nor{\nab v}{L^{4/(2-\sig)}})\\
&\le C
\nor{\rd_m u}{\dot{H}^\sig} (1 + \nor{\psi}{L^{4/(2-\sig)}} +\nor{A}{L^{4/(2-\sig)}})\\
&\le C
\nor{\rd_m u}{\dot{H}^\sig} (1 +\nor{\psi}{L^2}^2 + \nor{\psi}{L^{4/(2-\sig)}}^2)\\
&\le C
\nor{\rd_m u}{\dot{H}^\sig} (1 + \nor{\nab u}{H^{\sig/2}}^2), 
\end{aligned}
\end{equation*}
where we used (\ref{Ea1}) and Lemma \ref{EL1}. When $\sig=1$, it suffices to control 
\begin{equation*}
\nor{\rd ^2 u \cdot v}{L^2},\quad \nor{\rd u \cdot \rd v}{L^2}
\end{equation*}
for $\rd=\rd_1,\rd_2$. 
The former is controlled by $\nor{\nab u}{H^1}$. 
The latter is estimated as follows:
\begin{equation*}
\begin{aligned}
\nor{\rd u \cdot \rd v}{L^2}
\le 
\nor{\rd u}{L^4} \nor{\rd v}{L^4} 
&\le C \nor{\nab u}{H^1} (1+ \nor{\psi}{L^4} + \nor{A}{L^4})\\
&\le C \nor{\nab u}{H^1} (1+ \nor{\nab u}{H^1}^2),
\end{aligned}
\end{equation*}
which leads to (\ref{ay3}). \bigskip\par
It remains to prove (\ref{ay4}). 
For $\sig >1$, $m=1,2$ and for $\rd=\rd_1, \rd_2$, if we take a number $\ep\in (0, \min 
\{1, \sig -1\} )$, we have
\begin{equation*}
\begin{aligned}
\nor{|\nab |^{\sig-1} \rd \psi_m}{L^2}
&= \nor{|\nab |^{\sig-1} (\rd \rd_m u \cdot v + \rd_m u \cdot \rd v + i\rd \rd_m u \cdot w + i\rd_m u \cdot \rd w)}{L^2} \\
&\le C \nor{\nab u}{H^\sig} 
+ C\nor{\rd \rd_m u}{L^{\frac{2}{1-\ep}}}  \left( \nor{ |\nab|^{\sig -1} v}{L^{\frac{2}{\ep}}}  + \nor{ |\nab|^{\sig -1} v}{L^{\frac{2}{\ep}}} \right)  \\
&\qquad+ C\nor{|\nab |^{\sig -1} \rd_m u}{L^{\frac{2}{\ep}}} \left( \nor{ \rd v}{L^{\frac{2}{1-\ep}}}
 +\nor{ \rd v}{L^{\frac{2}{1-\ep}}} \right) \\
&\qquad+ C\nor{\rd_m u}{L^\infty} \left( \nor{ |\nab|^{\sig-1} \rd v}{L^{2}}
 +\nor{ |\nab|^{\sig-1} \rd w}{L^{2}}\right) \\
&\le C \nor{ \nab u }{H^\sig} \left( \nor{\nab v}{H^{\sig-1}} + \nor{\nab w}{H^{\sig -1}} \right) \\
&\le \nor{\nab u}{H^\sig} P_{\lfloor \sig -1 \rfloor} (\nor{\psi}{H^{\sig-1}}),
\end{aligned}
\end{equation*}
where we used (\ref{ay1}) and (\ref{ay2}). 
Thus (\ref{ay4}) follows by induction.\qed\bigskip\par
%
%
%
%
Finally we observe the properties concerning time derivatives.

\begin{prop}\label{EP4}
Let $u,v,w$ as above. 
Then, for $\sig >0$, there exists a polynomial $P= P_{\lfloor \sig \rfloor}$ such that the followings are true for all $t\in [0,T]$.\\
(i) 
$
\nor{\psi_0}{{H}^{\sig}}\le \nor{\psi}{H^{1+\sig}} P(\nor{\psi}{H^{\sig}})
$.\\
(ii)
$
A_0 = (2\pi)^{-2} \sum_{j=1}^2 \mathcal{F}^{-1} \left[ |\xi|^{-1} \caf R_j \im (\psi_0 \ovl{\psi_j}) \right]
$.\\
(iii) 
$
\nor{\nab A_0}{{H}^{\sig}} \le C \nor{\psi}{H^{\sig+1}}  P(\nor{\psi}{H^{\sig}})
$.
\end{prop}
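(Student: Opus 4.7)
The plan is to prove the three assertions in order, with (i) and (ii) both feeding into (iii).

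For part (i), the starting point is the scalar identity (\ref{g3}), which can be expanded as
\[
\psi_0 = -i\sum_{l=1}^2 \rd_l \psi_l + \sum_{l=1}^2 A_l \psi_l - b\sum_{l=1}^2 u_l \psi_l.
\]
The first sum contributes $\nor{\psi}{H^{\sigma+1}}$ directly. The products $A_l \psi_l$ are controlled by a fractional Leibniz rule, using Proposition \ref{EP2} together with Lemma \ref{EL1} to bound $A$ quadratically in $\psi$ in the relevant mixed Lebesgue–Sobolev norms. The products $u_l \psi_l$ are handled via $|u|=1$ combined with the equivalence $\nor{\nab u}{H^\sigma}\sim \nor{\psi}{H^\sigma}$ supplied by Proposition \ref{EP3}. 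All nonlinear contributions are absorbed into $P(\nor{\psi}{H^\sigma})$.

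For part (ii), the plan is to specialise the curvature identity (\ref{g2}) to $m=j$, $l=0$, yielding $\rd_j A_0 - \rd_0 A_j = \im(\psi_j \ovl{\psi_0})$. Applying $\rd_j$ and summing over $j$, the Coulomb gauge (\ref{Ea2}) (whose time derivative gives $\sum_j \rd_j \rd_0 A_j = 0$) kills the second term, leaving
\[
\Delta A_0 = \sum_{j=1}^2 \rd_j \im(\psi_j \ovl{\psi_0}).
\]
Inverting the Laplacian, using the identity $\im(\psi_j\ovl{\psi_0}) = -\im(\psi_0\ovl{\psi_j})$, and matching the Fourier multipliers via $\Delta^{-1}\rd_j = -|\nab|^{-1}R_j$ produces the claimed formula. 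The only subtle point is that the spatial Coulomb gauge determines $(v,w)$ only up to a time-dependent rotation $\chi(t)$, under which $A_0$ shifts by $\rd_0\chi(t)$; the integral representation corresponds to the unique choice of $\chi$ for which $A_0$ has no spatial constant component, which is the one consistent with decay at infinity.

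For part (iii), differentiate the representation of (ii): $\rd_k A_0$ equals (up to a constant) $\sum_j R_k R_j \im(\psi_0 \ovl{\psi_j})$, and since the Riesz transforms are bounded on $H^\sigma$, the estimate reduces to controlling $\nor{\psi_0 \ovl{\psi_j}}{H^\sigma}$. A fractional Leibniz estimate, applied after using (i) to extract one factor of $\nor{\psi}{H^{\sigma+1}}$ from $\psi_0$, closes the bound. The main obstacle is to carry out this product estimate cleanly, since for $\sigma\le 1$ the embedding $H^\sigma\hookrightarrow L^\infty$ fails in 2D. The remedy is to use instead the intermediate embeddings $H^\sigma \hookrightarrow L^{2/(1-\sigma)}$ for $\sigma<1$ together with $H^{\sigma+1}\hookrightarrow L^\infty$ for $\sigma>0$, and, where needed, to substitute the explicit form of $\psi_0$ from (\ref{g3}) and rewrite the leading piece $(\rd_l\psi_l)\psi_j$ as $\rd_l(\psi_l\psi_j) - \psi_l(\rd_l\psi_j)$, so that every derivative ends up acting on a quadratic expression in $\psi$ whose $H^{\sigma+1}$-norm can be controlled via the algebra property and Proposition \ref{EP3}.
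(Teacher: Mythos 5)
Your treatment of (i) and (ii) follows essentially the paper's route: (i) is the Leibniz/Lemma~\ref{EL1} machinery of Proposition~\ref{EP3} applied to the expansion of (\ref{g3}), and (ii) is exactly the paper's ``follows from (\ref{g2})'' with the details written out, the Coulomb gauge killing $\partial_0(\partial_1A_1+\partial_2A_2)$. Your remark on the harmonic ambiguity in (ii) is a fair point the paper passes over silently, but the clean resolution is not to ``choose $\chi$'' --- the frame of Proposition~\ref{EP1} is already fixed, so there is nothing left to choose --- but to verify that its $A_0=\partial_t v\cdot w$ has enough decay/integrability that it must coincide with the decaying solution of the Poisson equation $\Delta A_0=\sum_j\partial_j\im(\psi_j\ovl{\psi_0})$; this is a minor repair.

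The substantive gap is in (iii) for $\sigma\in(0,1]$. After reducing to $\nor{\psi_0\ovl{\psi_j}}{H^\sigma}$, or with your rewriting to $\nor{\psi_l\ovl{\psi_j}}{H^{\sigma+1}}$ plus cubic terms, every Leibniz or algebra-property estimate you invoke puts the undifferentiated factor in $L^\infty$, hence costs $\nor{\psi}{H^{1+\epsilon}}$, and this is \emph{not} majorized by $P(\nor{\psi}{H^{\sigma}})$ when $\sigma\le 1$; interpolating $\nor{\psi}{H^{1+\epsilon}}$ between $H^{\sigma}$ and $H^{\sigma+1}$ leaves a power of the top norm strictly larger than one. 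So your sketch proves the claim for $\sigma>1$, but for $\sigma\in(0,1]$ it only yields $\nor{\nabla A_0}{H^{\sigma}}\le \nor{\psi}{H^{\sigma+1}}\,P(\nor{\psi}{H^{1+\epsilon}})$ rather than the stated $\nor{\psi}{H^{\sigma+1}}\,P(\nor{\psi}{H^{\sigma}})$; the same remark applies to the $L^\infty$ factor you would need in the first term of the Leibniz split after extracting $\nor{\psi_0}{H^\sigma}$ via (i). To be fair, the paper's own two-line proof conceals the identical difficulty (its displayed product bound $\nor{\psi_j\psi_0}{H^\sigma}\lesssim \nor{\psi}{H^\sigma}\nor{\psi_0}{L^2}+\nor{\psi}{L^2}\nor{\psi_0}{H^\sigma}$ is not a valid bilinear estimate as written), and in every later use of (iii) in Proposition~\ref{Pa1} either $\sigma>1$ (the case $L\ge 2$) or the polynomial is anyway evaluated at $M_T=\nor{\psi}{H^{1+\delta}}$ (the case $L=1$), for which your weaker bound suffices; but as a proof of the proposition as literally stated, the case $\sigma\le 1$ of (iii) is not closed by the argument you propose.
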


\begin{proof}
By using (\ref{g3}), (i) follows by the same argument as in the proof of Proposition \ref{EP3}.
(ii) follows from (\ref{g2}). By using (ii), we have
\begin{equation*}
\begin{aligned}
\nor{\nab A_0}{H^\sig} \le C \sum_{j=1}^d \nor{\psi_j \psi_0}{H^\sig}
&\le C ( \nor{\psi}{H^\sig} \nor{\psi_0}{L^2} + \nor{\psi}{L^2} \nor{\psi_0}{H^\sig})\\
&\le \nor{\psi}{H^{1+\sig}}  P(\nor{\psi}{H^{\sig}}),
\end{aligned}
\end{equation*}
which gives (iii).\qed
\end{proof}

%
%
%
%

\section{A priori estimate}\label{Sc5}

In this section, we prove Theorem \ref{T2} (a), (b) and (c). 
The key estimate is the following:
\begin{prop}\label{Pa1}
Let $L\ge 1$ be an integer, and let $\ep$ be a number in $[0,1)$ when $L\ge 2$, or in $(0,1)$ when $L=1$. 
When $\ep>0$, we also set $\del \in (0,\ep)$. 
Let $u\in C([0,T], \vec k + H^{L+\ep})$ be a solution to (\ref{a1}), and set 
$Y_T := \sum_{|\al | = L, m=1,2} \nor{\rd^\al_x \psi_m}{L^\infty_T H^\ep}$. 
We further set $M_T := \nor{\psi}{L^\infty_T H^{L-\frac{1}{2}}}$ when $L\ge 2$, or 
$M_T := \nor{\psi}{L^\infty_T H^{1+\del}}$ when $L=1$. 
Then there exists $K>0$, independent of $L$, $\ep$, and $\del$, 
such that the following inequality holds true.\\
\begin{equation}\label{aa1}
Y_T \le C \nor{u_0-\vec k}{H^{L+1+\ep}} + T(1+T)^K Y_T P_L (M_T), 
\end{equation}
where $C=C(\nor{u_0 - \vec k}{H^{\max \{L+\ep ,\frac 3 2 \} }}, L) >0$ is a constant, and 
$P_L$ is a polynomial.
\end{prop}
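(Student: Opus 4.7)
The plan is an energy estimate at the top regularity $H^{L+\epsilon}$ for the modified Schr\"odinger map equation (\ref{g4}), relying on a hidden magnetic structure that neutralizes the derivative loss produced by the helicity term. First, I rewrite the helicity nonlinearity. Using $D_m\psi_l = D_l\psi_m$ from (\ref{g1}) and $D_m(u_l\psi_l) = (\partial_m u_l)\psi_l + u_l D_m\psi_l$, the last term of (\ref{g4}) becomes
$$
-b\sum_l D_m(u_l\psi_l) = -b\sum_l(\partial_m u_l)\psi_l - b\sum_l u_l D_l\psi_m,
$$
and completing the square with $\tilde D_l := D_l - ibu_l/2 = \partial_l + i(A_l - bu_l/2)$ gives
$$
-i\sum_l D_l^2\psi_m - b\sum_l u_l D_l\psi_m = -i\sum_l \tilde D_l^2\psi_m + R_m,
$$
where $R_m$ is a pointwise-multiplicative expression in $\psi_m$, $u$, and $\partial u$. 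Setting $\tilde A_l := A_l - bu_l/2$ and absorbing $iA_0\psi_m$ into a transport term, (\ref{g4}) is recast as
$$
(\partial_t - iA_0)\psi_m = -i\sum_l \tilde D_l^2\psi_m + G_m,
$$
where $G_m$ collects the remaining semilinear pieces (multilinear in $\psi$, with at most one derivative on $\psi$).

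Second, I run the energy estimate. The operator $\tilde{\mathcal L} := -\sum_l \tilde D_l^2$ is formally self-adjoint on $L^2$ because $\tilde A_l$ is real, so multiplication by $-i$ makes it skew-adjoint. Applying $|\nabla|^\epsilon\partial^\alpha$ with $|\alpha| = L$ to the equation and taking the real $L^2$-inner product with $|\nabla|^\epsilon\partial^\alpha\psi_m$, the would-be principal term $\langle |\nabla|^\epsilon\partial^\alpha\psi_m,\, -i|\nabla|^\epsilon\partial^\alpha\tilde{\mathcal L}\psi_m\rangle$ contributes only through the commutator $[|\nabla|^\epsilon\partial^\alpha, \tilde{\mathcal L}]\psi_m$. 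The curvature-type estimate (\ref{g2}), the identity $\tilde A_l = A_l - bu_l/2$, and Proposition \ref{EP2} then express $\tilde A$ (and its derivatives) as a bilinear integral operator in $\psi$; together with Proposition \ref{EP3}--\ref{EP4}, all norms of $u-\vec k$, $v-\vec k_1$, $w-\vec k_2$, $A$, $A_0$ are replaced by polynomials in $\|\psi\|_{H^\sigma}$ with $\sigma \le L - 1/2$ (or $1+\delta$ when $L=1$), i.e.\ by $P_L(M_T)$.

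Third, I bookkeep the multilinear structure. The commutator $[|\nabla|^\epsilon\partial^\alpha,\tilde{\mathcal L}]\psi_m$ together with $|\nabla|^\epsilon\partial^\alpha G_m$ and the $A_0$-contributions expands into a sum of paraproducts in $\psi$, $\tilde A$, $u$. Using the fractional Leibniz and Kato--Ponce commutator estimates on $\R^2$, and distributing derivatives by a Littlewood--Paley decomposition, every such term is written as a product in which exactly one factor carries the full regularity $H^{L+\epsilon}$ (yielding $Y_T$) while the remaining factors are controlled by the Sobolev embedding $H^{L-1/2}(\R^2)\hookrightarrow L^\infty$ (resp.\ $H^{1+\delta}\hookrightarrow L^\infty$ for $L=1$), giving a factor $P_L(M_T)$. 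Integrating in $t$, summing over $|\alpha|=L$ and $m=1,2$, and converting the initial datum via $\|\psi|_{t=0}\|_{H^{L+\epsilon}}\lesssim \|u_0-\vec k\|_{H^{L+1+\epsilon}}$ (Proposition \ref{EP3}), one arrives at the claimed form (\ref{aa1}), with the $(1+T)^K$ factor absorbing the polynomial $T$-growth that is introduced when reconstructing $\tilde A$, $A_0$ and the time-derivative commutators from their Poisson-type representations.

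The main obstacle is precisely the arithmetic of step three: one must check, for every multilinear term produced by commuting $|\nabla|^\epsilon\partial^\alpha$ past $\tilde{\mathcal L}$, past $A_0$, and against $G_m$, that no two factors simultaneously require the top regularity. The half-integer threshold $L-1/2$ in the definition of $M_T$ (and $1+\delta$ for $L=1$) is tuned to be the minimal Sobolev exponent for which $H^{L-1/2}(\R^2)$ still embeds into $L^\infty$ and into $W^{L-1,r}$ for some $r<\infty$ relevant to the paraproduct, so every ``middle-frequency'' factor can be absorbed into $P_L(M_T)$ without demanding the higher regularity $Y_T$. The helicity-specific cross term $-b(\partial_m u_l)\psi_l$ and the commutator $[\partial_t,\tilde D_l]$ (which drags in $\psi_0$, bounded via Proposition \ref{EP4}(i)) are the two places where this accounting is tightest.
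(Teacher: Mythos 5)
Your core reduction is exactly the paper's: you complete the square with $\wta_l=A_l-\tfrac b2 u_l$ so that the derivative-losing half of the helicity term is absorbed into the magnetic Laplacian $\Del_\wta$, whose image under $-i$ is skew-adjoint and drops out of the energy pairing, the leftover $\can_m$-type terms carry no derivatives of $\psi_m$, and all coefficients ($A$, $A_0$, $\psi_0$, $u,v,w$) are converted into $P_L(M_T)$ through Propositions \ref{EP2}--\ref{EP4}, with the initial-data term coming from (\ref{ay4}). Where you genuinely diverge is the handling of the fractional exponent $\ep$: you differentiate with the flat operator $|\nab|^\ep\rd^\al$ and estimate $[|\nab|^\ep\rd^\al,\Del_\wta]$ and $[|\nab|^\ep\rd^\al,A_0]$ by Kato--Ponce/paraproduct bounds, whereas the paper differentiates with the magnetic fractional power $\Ome_\wta^\ep=(1-\Del_\wta)^{\ep/2}$, which commutes \emph{exactly} with the principal operator; the price the paper pays is the time commutator $R_1=[D_0,\Ome_\wta^\ep]\rd^\al\psi_m$ and the norm-equivalence constants of Lemma \ref{La1} (imported from Wada's Maxwell--Schr\"odinger analysis), which is precisely where $\jb{\nor{\wta}{X}}^K$, hence the factor $(1+T)^K$ and the constant $C(\nor{u_0-\vec k}{H^{\max\{L+\ep,3/2\}}})$, enter via (\ref{bx2}). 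Your route can be made to work, but be aware of the one spot where naive fractional Leibniz fails: in $[|\nab|^\ep,\wta_l]\rd_l\rd^\al\psi_m$ the crude product-rule bound places $L+1$ full derivatives on $\psi_m$, above the available regularity $L+\ep$; you need instead a Calder\'on/Kenig--Ponce--Vega type commutator estimate of order $\ep-1$, namely $\nor{[|\nab|^\ep,f]g}{L^2}\le C\nor{\nab f}{L^\infty}\nor{|\nab|^{\ep-1}g}{L^2}$ for $\ep\in(0,1)$, together with the verification $\nor{\nab\wta}{L^\infty}\le P(M_T)$ (a double Riesz transform of $\im(\psi_2\ovl{\psi_1})$ plus $\nab u$; this is where the thresholds $L-\tfrac12$, resp.\ $1+\del$ for $L=1$, are actually used). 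The paper's choice of $\Ome_\wta^\ep$ is designed to avoid this commutator entirely, its only fractional commutator being the time one, handled by Lemma \ref{La1} (iv) through $\wtf_{0k}$ and Proposition \ref{EP4}; in exchange, even in your flat-derivative version you would still need the paper's growth bound on $\nor{\wta}{L^\infty_T X}$ in terms of the data and $T$ (your ``Poisson-representation'' remark), since $\nab\wta$ and $A_0$ must ultimately be expressed through $\psi$ and $u_0$ to produce the stated form of (\ref{aa1}).
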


We first show that Proposition \ref{Pa1} implies the conclusions. \bigskip\par

\textit{Proof of Theorem \ref{T2} (a), (b) and (c).} 
We divide the proof into 3 steps:\bigskip\par
\textit{Step 1}. 
We suppose here that $u_0\in \vec k +H^\infty$. 
Theorem \ref{T1} implies that for $L\ge 3$ with $L\in\bbz$, there exists 
a unique solution to (\ref{a1}) $u\in L^\infty ([0,T_{\max}^L), \vec k + H^L)$ where $T_{\max}^L(u_0)$ is the maximal existence time in this class. 
Now we consider the following proposition for $s,s'\in\R$:\bigskip\par
$(P)_{s, s'}$: $T_{\max}^s = T_{\max}^{s'}$.\bigskip\par
In this step, we shall show that $(P)_{s,3}$ is true for $s\ge 3, s\in \mathbb{Z}$ by induction with respect to $s$. 
We assume that for some $s\ge 3, s\in \mathbb{Z}$, $(P)_{s',3}$ is true for all $s'\in [3,s]\cap \mathbb{Z}$. 
Suppose $T_{\max}^{s+1}< T_{\max}^{3}$ and we will show the contradiction. \par
We apply Proposition \ref{Pa1} with $L={s-1}$. Then Proposition \ref{EP3} implies
\begin{equation*}
\sum_{|\al | = s, m=1,2} \nor{\rd^\al_x \psi_m}{L^\infty_T H^\ep}
\le C(\nor{u_0-\vec k}{H^{s-1+\ep}}) \nor{u_0-\vec k}{H^{s+\ep}}
\end{equation*}
for all $T<  T_{\max}^{s+1} \wedge T(\nor{u-\vec k}{L^\infty_{T^{s+1}_{\max} }H^s})  $. 
On the other hand, (\ref{a2}) yields
\begin{equation*}
\begin{aligned}
\nor{u-\vec k}{L^\infty_T L^2_x}^2 &\le \nor{u_0- \vec k}{L^2}^2 + C T \nor{u-\vec k}{L^\infty_T L^2_x} \nor{\nab u}{L^\infty_T L^2_x}\\
&\le \nor{u_0- \vec k}{L^2}^2 + \frac 1 2 \nor{u-\vec k}{L^\infty_T L^2_x}^2 +CT^2 \nor{\nab u}{L^\infty_T L^2_x}^2,
\end{aligned}
\end{equation*}
which implies
\begin{equation}\label{aa15}
\nor{u-\vec k}{L^\infty_T L^2_x} \le C \nor{u_0- \vec k}{L^2} + C T \nor{\nab u}{L^\infty_T L^2_x}.
\end{equation}
Thus it follows that
\begin{equation}\label{aa2}
\nor{u-\vec k}{L^\infty_T H^{s+\ep}}
\le C(\nor{u_0-\vec k}{H^{s-1 +\ep}} ) \nor{u_0-\vec k}{H^{s+\ep}}.
\end{equation}
for $T<  T_{\max}^{s+1} \wedge T(\nor{u-\vec k}{L^\infty_{T^{s+1}_{\max} }H^s})  $. 
Repeating this procedure in finite times yields the same bound 
for $T= T_{\max}^{s+1} $ with modifying the constant in the right hand side. \par
Next we apply Proposition \ref{Pa1} for $L=s$, $\ep=0$. Then, the same argument above yields
\begin{equation}\label{aa3}
\nor{u-\vec k}{L^\infty_T H^{s+1}}
\le C(\nor{u_0-\vec k}{H^{s}} ) \nor{u_0-\vec k}{ H^{s+1}}
\end{equation}
for all $T< T_{\max}^{s+1} \wedge T(\nor{u-\vec k}{L^\infty_{T^{s+1}_{\max}} H^s})$, where we used (\ref{aa2}). If we use (\ref{aa3}) repeatedly, we have 
the same bound for $T=T_{\max}^{s+1}$ with a certain change of constant, 
which contradicts to Theorem \ref{T1}. \bigskip\par 
As a consequence, for $u_0\in \vec k+H^\infty$ we have a unique solution $u\in C ([0,T^\infty_{\max}) : \vec k + H^\infty)$, where $T^\infty_{\max}$ is the maximal existence time in this class, and we have $T^\infty_{\max} = T^3_{\max}$.
\bigskip\par

\textit{Step 2}. 
Next, we prove Theorem \ref{T2} (a). 
For $u_0\in \vec k +H^{s}$ with $s>2$, $s\in\R\backslash \bbz$, we shall construct a unique solution $u\in L^\infty_T(\vec k +H^s)$ with some $T=T(\nor{u_0 - \vec k}{H^s})$. By standard compactness argument, it suffices to show
\begin{equation}\label{aa35}
\nor{u-\vec k}{L^\infty_T H^s} \le C (\nor{u_0 - \vec k}{H^s} )
\end{equation}
for $u_0 \in \vec k + H^\infty$ with some $T=T(\nor{u_0- \vec k}{H^s})$.\par 
By Propositions \ref{Pa1} and \ref{EP3}, and by (\ref{aa15}) we have
\begin{equation}\label{aa4}
\nor{u-\vec k}{L^\infty_T H^{s}} \le C + C T (1+T)^K \nor{u-\vec k}{L^\infty_T H^{s}} P_s 
(\nor{u-\vec k}{L^\infty_T H^{s}})
\end{equation}
for $T<T^\infty_{\max}$ with $C=C(\nor{u_0-\vec k}{H^s})$. 
By bootstrapping argument, (\ref{aa4}) yields (\ref{aa35}) 
for all $T<T(\nor{u_0-\vec k}{H^s})\wedge T^\infty_{\max} \wedge 1$. 
Then the same argument as in Step 1 shows 
$T^\infty_{\max} \ge T(\nor{u_0-\vec k}{H^s}) \wedge 1$, which concludes the claim. \par
The fact that the solution above is in $C_t(\vec k + H^s)$ can be shown by 
using Theorem \ref{T2} (d), which will be proved in Section \ref{Sc6} independently of this fact.
\bigskip

\textit{Step 3}. 
We finally show the rest part. 
(c) follows from (\ref{aa35}) via standard approximating argument. 
In particular, this bound implies that if the maximal existence time $T^s_{\max}$ in the class $L^\infty_T H^s$ is finite, then
\begin{equation}
\lim_{t\to T^s_{\max}-} \nor{u(t)- \vec k}{H^s} = \infty.
\end{equation} 
We can then show (b) by the same argument as in Step 1.\qed\bigskip\par

The idea to prove Proposition \ref{Pa1} is as follows. We can rewrite the equation (\ref{g4}) as
\begin{equation}\label{ab1}
D_0 \psi_m = -i \sum_{l=1}^2 \widetilde{D}_l \widetilde{D}_l \psi_m + \can_m
\end{equation}
for $m=1,2$, where
\begin{equation*}
\wtd_l := \rd_l + i \wta_l,\quad \wta_l := A_l  - \frac{1}{2} b u_l \quad (l=1,2),
\end{equation*}
\begin{equation*}
\can_m := b\sum_{l=1}^2 
(\frac{1}{2} (\rd_l u_l)\psi_m - \frac{ib}{4}u_l^2 \psi_m - (\rd_m u_l) \psi_l ) +\sum_{l=1}^2 \Im (\psi_m \ovl{\psi_l}) \psi_l\quad (m=1,2).
\end{equation*}

Note that $i\sum_{l=1}^2 \wtd_l \wtd_l$ is a skew-symmetric operator on $L^2(\R^2)$, and that $\can_m$ includes no derivatives of $\psi_m$. 
This structure allows us to apply the energy method by introducing the differential operator associated with magnetic potential $\wta = {}^t (\wta_1 ,\wta_2)$. \bigskip\par

Writing $\Del_\wta := \sum_{l=1}^2 \wtd_l \wtd_l$, 
we have the following lemma:
\begin{lem}\label{La1}
Let $X= L^4 (\R^2)\cap \dot{H}^1 (\R^2)$. 
Then the followings are true.
\\
(i) 
$1-\Del_\wta$ is a self-adjoint, positive operator on $L^2(\R^2)$ for each $t\in [0,T]$ with domain $H^2(\R^2)$.\\
(ii) Let $\Ome_\wta := (1-\Del_\wta)^{1/2}$. Then there exist constants 
$C,K>0$ with $K\in \bbz$ such that for all $s\in [-2,2]$ and $t\in [0,T]$, we have
\begin{equation*}
C^{-1} \jb{\nor{\wta}{X}}^{-K} \nor{f}{H^s} \le \nor{\Ome_\wta^s f }{L^2} 
\le C \jb{\nor{\wta}{X}}^K \nor{f}{H^s} .
\end{equation*}
(iii) For all $s\in [-2,2]$ and $\lam > 0$, we have
\begin{equation*}
\nor{\Ome_\wta^s (\Ome_\wta^2 +\lam)^{-1} f}{L^2} \le C \jb{\lam}^{-1+\frac{s}{2}} \nor{f}{L^2}.
\end{equation*}
(iv) Let $\ep>0$ and $r\in (2,\infty)$. Then there exist $C=C_{\ep,r} ,K>0$ with $K\in \bbz$ such that for each $t\in[0,T]$, we have
\begin{equation*}
\nor{\Ome_\wta^\ep [\Ome_\wta^{-\ep}, D_0 ] f}{L^2} \le C \jb{\nor{\wta}{X}}^K \left( \nor{\wtf_{01}}{L^r} + \nor{\wtf_{02}}{L^r} \right) 
\nor{f}{L^2},
\end{equation*}
where $\wtf_{0k} := \rd_0 \wta_k - \rd_k A_0 = \Im (\psi_0 \ovl{\psi_k}) -\frac{1}{2} b \rd_t u_k$.
\end{lem}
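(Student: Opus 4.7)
The plan is to build up the magnetic functional calculus in three light steps (i)--(iii) and then use it to control the commutator (iv), which is the technical heart of the lemma.

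For (i), I would expand
$-\Del_\wta = -\Del - 2i\wta\cdot \nab - i(\nab\cdot \wta) + |\wta|^2$.
In two dimensions the hypothesis $\wta\in L^4\cap \dot{H}^1$ makes every potential term form-bounded with relative bound zero with respect to $-\Del$ (via H\"older and the Sobolev embeddings $H^1\hookrightarrow L^p$ for $p<\infty$), so the KLMN theorem yields self-adjointness on $H^2$, and positivity is immediate from the quadratic form identity $\inp{(1-\Del_\wta)f}{f} = \nor{f}{L^2}^2 + \sum_l \nor{\wtd_l f}{L^2}^2 \ge \nor{f}{L^2}^2$. For (ii), I would first verify the norm equivalence at the integer levels $s=0,1,2$: the case $s=1$ from the quadratic form identity combined with H\"older/Sobolev bounds on $\nor{\wta_l f}{L^2}$, and $s=2$ by expanding $\Ome_\wta^2 f$ termwise and bounding the cross terms against $\nor{(1-\Del)f}{L^2}$. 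Negative integer $s$ follow by duality, and the full range $s\in [-2,2]$ by complex interpolation along the scale generated by the positive self-adjoint operator $\Ome_\wta$. Tracking all constants through H\"older gives the polynomial factor $\jb{\nor{\wta}{X}}^K$.

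For (iii), I would apply the spectral theorem to $\Ome_\wta^2\ge 1$: the scalar multiplier $\mu\mapsto \mu^{s/2}/(\mu+\lam)$ is uniformly bounded on $[1,\infty)$ by $C\jb{\lam}^{s/2-1}$ in the relevant range of $s$, and this bound transfers to the operator norm on $L^2$ via functional calculus; combining with (ii) converts the resulting $\Ome_\wta^s$-estimate into the stated $H^s$--$L^2$ bound.

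For (iv), I would start from the Balakrishnan-type integral representation
\begin{equation*}
\Ome_\wta^{-\ep} = \frac{\sin(\pi\ep/2)}{\pi}\int_0^\infty \lam^{-\ep/2}(\Ome_\wta^2+\lam)^{-1}\, d\lam \quad (\ep\in (0,2)),
\end{equation*}
which yields
\begin{equation*}
[\Ome_\wta^{-\ep},D_0]=\frac{\sin(\pi\ep/2)}{\pi}\int_0^\infty \lam^{-\ep/2}(\Ome_\wta^2+\lam)^{-1}[D_0,\Ome_\wta^2](\Ome_\wta^2+\lam)^{-1}\, d\lam.
\end{equation*}
A direct computation gives the key algebraic identity $[D_0,\wtd_l]=i\wtf_{0l}$, hence
\begin{equation*}
[D_0,\Ome_\wta^2]=-i\sum_{l=1}^2(\wtf_{0l}\wtd_l + \wtd_l\wtf_{0l}),
\end{equation*}
a first-order magnetic differential operator whose only scalar coefficients are the curvature components $\wtf_{0l}$ that we wish to place in $L^r$. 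I would then distribute $\Ome_\wta^\ep(\Ome_\wta^2+\lam)^{-1}$ as a bounded factor with gain $\jb{\lam}^{\ep/2-1}$ from (iii), pair each $\wtd_l$ with a resolvent so that $\nor{\wtd_l(\Ome_\wta^2+\lam)^{-1}}{L^2\to L^2}\le C\jb{\lam}^{-1/2}$ (the $s=1$ case of (iii), using $\sum_l\nor{\wtd_l g}{L^2}^2\le \nor{\Ome_\wta g}{L^2}^2$), and absorb the multiplication by $\wtf_{0l}$ by H\"older together with the Sobolev embedding $H^{2/r}\hookrightarrow L^{2r/(r-2)}$, the latter costing another fractional power of $\Ome_\wta$ which is reabsorbed via (iii).

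The main obstacle is (iv): the $\lam$-integral is only conditionally convergent, and the distribution of the $\Ome_\wta^\ep$, $\wtd_l$ and $\wtf_{0l}$ factors across the two resolvents has to be arranged so that both the small-$\lam$ and large-$\lam$ tails are integrable against $\lam^{-\ep/2}$. The strict positivity of $\ep$ is precisely what secures endpoint integrability at $\lam=0$, while the gain $\jb{\lam}^{-1+\ep/2}$ from (iii) handles the tail at infinity; the polynomial $\jb{\nor{\wta}{X}}^K$ comes from applying (ii) to convert back and forth between $\Ome_\wta$-norms and ordinary Sobolev norms at each step.
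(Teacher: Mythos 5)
Your proposal is correct in outline, but it follows a genuinely different route from the paper: the paper does not reprove the magnetic functional calculus at all. It cites Wada \cite{W}, where (i)--(iv) are established for potentials in $H^1$ by essentially the steps you sketch (relative bounds for the first-order perturbation, norm equivalence at integer levels plus duality and interpolation, spectral calculus for the resolvent, and for the commutator the Balakrishnan representation of $\Ome_\wta^{-\ep}$ together with the identity $[D_0,\wtd_l]=i\wtf_{0l}$), and then confines itself to the single new estimate needed to transplant those proofs to $\wta\in X=L^4\cap\dot H^1$, namely the relative bound $\nor{Vf}{L^2}\le \ep\nor{(1-\Del)f}{L^2}+C\ep^{-3}\jb{\nor{\wta}{X}}^4\nor{f}{L^2}$ for $V=\sum_k\bigl(-2i\wta_k\rd_k-i(\rd_k\wta_k)+\wta_k^2\bigr)$, proved by H\"older and Gagliardo--Nirenberg applied to $f$. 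So your write-up is a self-contained reconstruction of the cited machinery (and it correctly identifies the algebraic engine behind (iv)), whereas the paper's proof is a reduction that isolates exactly what changes when the potential is measured only in $X$.

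Three caveats. First, be careful where you invoke Sobolev embedding in (i)--(ii): $\wta$ lies only in $L^4\cap\dot H^1$, and by Remark \ref{rm1} its $L^2$ norm is not controlled, so you cannot apply $H^1\hookrightarrow L^p$ to $\wta$; the potential terms must be estimated as $\nor{\wta_k\rd_k f}{L^2}\le\nor{\wta_k}{L^4}\nor{\rd_k f}{L^4}$, $\nor{(\rd_k\wta_k)f}{L^2}\le\nor{\rd_k\wta_k}{L^2}\nor{f}{L^\infty}$, $\nor{\wta_k^2 f}{L^2}\le\nor{\wta_k}{L^4}^2\nor{f}{L^\infty}$, with Gagliardo--Nirenberg applied to $f$ --- this is precisely the computation the paper supplies, and it is what produces the factor $\jb{\nor{\wta}{X}}^K$. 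Second, for the domain statement in (i) you need the operator relative bound and Kato--Rellich, not only form boundedness: KLMN gives a self-adjoint operator with form domain $H^1$ but does not by itself yield $D(1-\Del_\wta)=H^2$; since the bound above gives relative bound zero in the operator sense, the fix is immediate. Third, in (iii) the multiplier bound $\sup_{\mu\ge1}\mu^{s/2}(\mu+\lam)^{-1}\le C\jb{\lam}^{s/2-1}$ holds only for $s\in[0,2]$ (for $s<0$ the supremum is of size $\jb{\lam}^{-1}$), so your hedge ``in the relevant range'' should be made explicit; this is harmless because only nonnegative powers of $\Ome_\wta$ enter your proof of (iv), and likewise your Balakrishnan formula restricts to $\ep\in(0,2)$, which covers the application (where $\ep<1$).
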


\begin{proof}
All the proof can be found in \cite{W}, while the only difference here is that the norm of $A$ is measured by $X$, instead of $H^1$ in \cite{W}. 
Therefore, it suffices to show the following inequality:
\begin{equation*}
\nor{Vf}{L^2} \le \ep \nor{(1-\Del) f }{L^2} + C \ep^{-1} \jb{\nor{A}{X}}^4 \nor{f}{L^2},
\end{equation*}
where $V$ is the function defined by $1-\Del_\wta = 1-\Del +V$, or explicitly 
$V= \sum_{k=1}^2 \left(-2i\wta_k \rd_k - i(\rd_k \wta_k) + \wta_k^2 \right)$. However, using the Gagliardo-Nirenberg inequality, we have
\begin{equation*}
\begin{aligned}
&\nor{Vf}{L^2} \\
&\le 
 C\sum_{k=1}^2\nor{\wta_k}{L^4} \nor{\rd_k f}{L^4} 
+C\sum_{k=1}^2\nor{\rd_k \wta_k}{L^2} \nor{f}{L^\infty}
+C (\sum_{k=1}^2\nor{\wta_k}{L^4} )^2 \nor{f}{L^\infty}\\
&\le C \nor{\wta}{X} \nor{f}{L^2}^{1/4} \nor{\Del f}{L^2}^{3/4} 
+ \jb{\nor{\wta}{X}}^2 \nor{f}{L^2}^{1/2} \nor{\Del f}{L^2}^{1/2}\\
&\le \ep \nor{(1-\Del) f }{L^2} + C\ep^{-3} \jb{\nor{\wta}{X}}^4\nor{f}{L^2},
\end{aligned}
\end{equation*}
which is the conclusion. \qed
\end{proof}

\textit{Proof of Proposition \ref{Pa1}}. 
For simplicity of notation, let $C_\wta$ stand for $C \jb{\nor{\wta}{L^\infty_T L^2_x}}^K$ 
for some constant $C, K>0$ with $K\in\bbz$ which are independent of $u$ and $\psi$. \par
Let $\al\in \bbn^2$ with $|\al|\le L$. 
Operating $\Ome_\wta^\ep \rd^\al \psi_m$ on both sides of (\ref{ab1}), 
we obtain 
\begin{equation}\label{ab2}
D_0 \Ome_\wta^\ep \rd^\al \psi_m 
= -i \Del_\wta \psi_m + \sum_{\nu=1}^5 R_\nu ,
\end{equation}
where 
%
\begin{equation*}
R_1:= [D_0, \Ome_\wta^\ep] \rd^\al \psi_m,
\end{equation*}
\begin{equation*}
R_2:= -i \sum_{\beta+ \gam =\al, \beta \neq 0} 
\frac{\al !}{\beta ! \gam !}\, \Ome_\wta^\ep (\rd^\beta A_0\cdot \rd^\gam \psi_m) ,
\end{equation*}
\begin{equation*}
R_3:= 2 \sum_{\beta+ \gam =\al, \beta \neq 0} \sum_{l=1}^2 
\frac{\al !}{\beta ! \gam !}\, \Ome_\wta^\ep (\rd^\beta \wta_l\cdot \nab\rd^\gam \psi_m) ,
\end{equation*}
\begin{equation*}
R_4:= i \sum_{\beta+ \gam =\al, \beta \neq 0} 
\frac{\al !}{\beta ! \gam !}\, \Ome_\wta^\ep (\rd^\beta (
|\wta|^2 
) \cdot \rd^\gam \psi_m) ,
\end{equation*}
\begin{equation*}
R_5 := \Ome_\wta^\ep \rd^\al \can_m
-\frac{b}{2} \Ome_\wta^\ep \rd^\al \left( (\rd_1 u_1 +\rd_2 u_2) \psi_m \right)
+ \frac{b}{2} \Ome_\wta^\ep \left( (\rd_1 u_1 +\rd_2 u_2) \rd^\al \psi_m \right)
.
\end{equation*}
Then we consider the inner product of (\ref{ab2}) and $\Ome_\wta^\ep \rd^\al \psi_m  $, which yields
\begin{equation}\label{bx1}
\nor{\Ome_\wta^\ep \rd^\al \psi_m}{L^\infty_T L^2_x}
\le \left. \nor{\Ome_\wta^\ep \rd^\al \psi_m}{L^2_x} \right|_{t=0}
+
C\sum_{\nu=1}^5 \int_0^T \nor{R_\nu}{L^2_x} dt.
\end{equation}
Now we claim that 
\begin{equation}\label{ac2}
\sum_{\nu=1}^5 \int_0^T \nor{R_\nu}{L^2_x} dt 
\le C_\wta T \nor{\psi}{H^{L+\ep}} P_L(M_T).
\end{equation}
Let us first examine $R_1$. 
We may assume $\ep>0$, since 
$R_1=0$ when $\ep=0$. 
Applying Lemma \ref{La1} (iv) with $r=2/(1-\del)$ for $\del \in (0,\ep)$, we have
\begin{equation}\label{ab3}
\begin{aligned}
\nor{R_1}{L^2_x} 
&= \nor{\Ome_\wta^\ep [\Ome_\wta^{-\ep} , D_0] \Ome_\wta^\ep \rd^\al \psi_m}{L^2_x} \\
&\le C_\wta (\nor{F_{01}}{L^{2/(1-\del)}_x} + \nor{F_{02}}{L^{2/(1-\del)}_x} ) \nor{\Ome_\wta^\ep \rd^\al \psi_m}{L^2_x}.
\end{aligned}
\end{equation}
For $F_{0k}$ ($k=1,2$), Proposition \ref{EP4} gives
\begin{equation*}
\begin{aligned}
\nor{F_{0k}}{L^{2/(1-\del)}_x} 
&\le C \nor{\psi_0 }{L^{2/(1-\del)}_x}  \left( \nor{\psi_k}{L^\infty_x} + 1 \right)\\
&\le \nor{\psi}{H^{1+\del}} P(\nor{\psi}{H^{1+\del}}).
\end{aligned}
\end{equation*}
Thus (\ref{ab3}) is bounded by
\begin{equation*}
C_\wta P(\nor{\psi}{H^{1+\del}_x}) \nor{\Ome_\wta^\ep \rd^\al \psi_m}{L^2_x}.
\end{equation*}

For $R_2$, we divide the proof into the case when $L=1$ and $L\ge 2$. 
If $L=1$, Proposition \ref{EP4}, Lemma \ref{La1} and the Leibniz rule yield
\begin{equation*}
\begin{aligned}
\nor{R_2}{L^2_x}
\le C_\wta \nor{(\rd^\al A_0) \cdot \psi_m }{H^\ep }
&\le C_\wta \left( \nor{\rd^\al  A_0}{H^\ep} \nor{\psi}{L^\infty} 
+ \nor{\rd^\al  A_0}{L^{2/(1-\ep)}} \nor{\psi}{H^\ep_{2/\ep}} \right)\\
&\le C_\wta \nor{\psi}{H^{1+\ep}} P( \nor{\psi}{H^{1+\del}} ).
\end{aligned}
\end{equation*}
When $L\ge 2$, we have
\begin{equation*}
\begin{aligned}
\nor{R_2}{L^2_x}
&\le C_\wta \left( \nor{\rd^\al  A_0}{H^\ep} \nor{\psi}{L^\infty_x} 
+ \nor{\rd^\al  A_0}{L^{2/(1-\ep)}_x} \nor{\psi}{H^\ep_{2/\ep}} \right)\\
&\hspace{20pt} + C_\wta \sum_{\beta+\gam=\al, \beta\neq 0,\al} 
\left( 
\nor{\rd^\beta A_0}{H^\ep_4} \nor{\rd^\gam \psi}{L^4_x} 
+ \nor{\rd^\beta A_0}{L^4_x} \nor{\rd^\gam \psi}{H^\ep_4}
\right)
\\
&\le C_\wta \nor{\psi}{H^{L+\ep}} P_L (M_T).
\end{aligned}
\end{equation*}
Next, when $L= 1$, $R_3$ is estimated as follows:
\begin{equation*}
\begin{aligned}
\nor{R_3}{L^2_x} &\le 
C_\wta 
\sum_{l=1}^2 
\nor{\rd^\al \wta_l \cdot \nab \psi_m }{H^\ep}\\
& \le 
C_\wta \left( \nor{\rd^\al \wta}{H^{\ep}_{2/\ep}} \nor{\nab \psi}{L^{2/(1-\ep)}_x}
+
\nor{\rd^\al \wta}{L^{\infty}_x} \nor{\nab \psi}{H^\ep}
\right)\\
&\le C_\wta \nor{\psi}{H^{1+\ep}} P_L (\nor{\psi}{H^{1+\del}}).
\end{aligned}
\end{equation*}
where we used Proposition \ref{EP2} (iii) in the last inequality. 
If $L\ge 2$, we have
\begin{equation*}
\begin{aligned}
\nor{R_3}{L^2_x} &\le 
C_\wta \sum_{\beta+\gam =\al, |\beta|=1} 
\left( 
\nor{\rd^\beta \wta}{H^\ep_{2/\ep}} \nor{\nab \rd^\gam \psi}{L^{2/(1-\ep)}}
+ \nor{\rd^\beta \wta}{L^\infty} \nor{\nab \rd^\gam \psi}{H^\ep} \right)\\
&\hspace{20pt} + C_\wta 
\sum_{\beta+\gam =\al, |\beta|\ge 2} 
\left( \nor{\rd^\beta \wta}{H^\ep_4} \nor{\nab\rd^\gam \psi}{L^4}
+ \nor{\rd^\beta \wta}{L^4} \nor{\nab \rd^\gam \psi}{H^\ep_4} \right)\\
& \le C_\wta \nor{\psi}{H^{L+\ep}} P_L(M_T) .
\end{aligned}
\end{equation*}
For $R_4$, we obtain
\begin{equation}\label{ac1}
\begin{aligned}
\nor{R_4}{L^2_x}
&\le 
C_\wta \sum_{\beta+\gam=\al, \beta\neq 0} \nor{\rd^\beta (|\wta|^2) \rd^\gam \psi}{H^\ep}\\
&\le C_\wta \left( \nor{\rd^\al (|\wta|^2)}{H^\ep} \nor{\psi}{L^\infty} 
+\nor{\rd^\al (|\wta|^2)}{L^{2/(1-\ep)}} \nor{\psi}{H^{\ep}_{2/\ep}} \right)\\
&\hspace{20pt} + C_\wta \sum_{\beta+\gam=\al, \beta\neq 0,\al} 
\left( \nor{\rd^\beta (|\wta|^2)}{L^4} \nor{\rd^\gam \psi}{L^4} 
+\nor{\rd^\beta (|\wta|^2)}{L^\infty} \nor{\rd^\gam \psi}{H^{\ep}} \right) .
\end{aligned}
\end{equation}
Here we have
\begin{equation*}
\nor{\rd^\al (|\wta|^2)}{H^\ep}
\le 
\sum_{\al_1+\al_2 =\al} 
\sum_{k=1,2} \nor{\rd^{\al_1} \wta_k }{H^\ep_4} \nor{\rd^{\al_2} \wta_k }{L^4}
\le C M_T^4 .
\end{equation*}
Thus (\ref{ac1}) is bounded by
\begin{equation*}
C_\wta \nor{\psi}{H^{L+\ep}} P_L(M_T).
\end{equation*}
For $R_5$, we first estimate $\nor{\Ome_\wta^\ep \rd^\al \psi_m}{L^2_x}$. 
To this end, we observe that $\can_m$ is a linear combination of the followings:
\begin{equation*}
(\rd_j u_k) \psi_m, \qquad u_k^2 \psi_m, \qquad \im(\psi_m \ovl{\psi_k}) \psi_k. 
\qquad (j,k=1,2)
\end{equation*}
For the first one, we have
\begin{equation*}
\begin{aligned}
\nor{\Ome_\wta^\ep \rd^\al ((\rd_j u_k)\psi_m)}{L^2_x}
&\le C_\wta \nor{ (\rd_j u_k)\psi_m}{H^{L+\ep}}\\
&\le C_\wta  \left( \nor{ \rd_j u_k}{H^{L+\ep}} \nor{\psi}{L^\infty_x} 
+ \nor{\rd_j u_k}{L^\infty_x} \nor{\psi}{H^{L+\ep}} \right)\\
&\le C_\wta \nor{\psi}{H^{L+\ep}} P_L (M_T).
\end{aligned}
\end{equation*}
For the third one, the Leibniz rule yields
\begin{equation*}
\nor{\Ome_\wta^\ep \rd^\al (\im(\psi_m \ovl{\psi_k}) \psi_k)}{L^2_x}
\le C_\wta  \nor{\im(\psi_m \ovl{\psi_k}) \psi_k}{H^{L+\ep}_x} \le C_\wta \nor{\psi}{H^{L+\ep}} M_T^2.
\end{equation*}
For the second one, we have
\begin{equation*}
\begin{aligned}
\nor{\Ome_\wta^\ep \rd^\al (u_k^2 \psi_m )}{L^2_x}
&\le C_\wta \left( \nor{u_k^2 \rd^\al \psi_m}{H^\ep} 
+\sum_{\al_1+\al_2=\al, \al_1 \neq 0} \nor{ \rd^{\al_1} u_k \cdot u_k \cdot \rd^{\al_2} \psi_m }{H^{\ep}} \right. \\
&\hspace{30pt} \left. +\sum_{\al_1+\al_2+\al_3=\al, \al_1, \al_2 \neq 0} \nor{ \rd^{\al_1} u_k \cdot \rd^{\al_2} u_k \cdot \rd^{\al_3} \psi_m }{H^{\ep}}\right) \\
& =: R_{51} + R_{52} + R_{53},
\end{aligned}
\end{equation*}
Each of these is estimated as follows:
\begin{equation*}
\begin{aligned}
R_{51} &\le C_\wta \left(
\nor{u_k^2 \psi_m}{L^2}^2 + \nor{|\nab|^\ep (u_k^2\psi_m)}{L^2}
\right) \\
&\le C_\wta \left( \nor{\psi}{H^L} + \nor{|\nab|^\ep u_k }{L^{2/\ep}} \nor{u_k}{L^\infty} 
\nor{\rd^\al \psi_m}{L^{2/(1-\ep)}}  + \nor{u_k}{L^\infty}^2 \nor{|\nab|^\ep \rd^\al \psi_m}{L^2} \right) \\
&\le C_\wta \nor{\psi}{H^{L+\ep}} P_L (M_T),
\end{aligned}
\end{equation*}
\begin{equation*}
\begin{aligned}
& R_{52} \le C_\wta  \sum_{\al_1+\al_2=\al, \al_1 \neq 0} \left(
\nor{\rd^{\al_1} u_k \cdot u_k \cdot \rd^{\al_2} \psi_m }{L^2_x} +
\nor{|\nab|^\ep (\rd^{\al_1} u_k \cdot u_k \cdot \rd^{\al_2} \psi_m )}{L^2_x}
\right)\\
&\le 
C_\wta  \sum_{\al_1+\al_2=\al, \al_1 \neq 0} (
\nor{\rd^{\al_1} u_k}{L^4} \nor{\rd^{\al_2} \psi}{L^4} + \nor{|\nab|^\ep \rd^{\al_1} u_k }{L^{2/\ep}} \nor{u_k}{L^\infty} \nor{\rd^{\al_2} \psi }{L^{2/(1-\ep)}}\\
&\hspace{100pt} + \nor{\rd^{\al_1} u_k}{L^\infty} \nor{\nab|^\ep u}{L^{2/\ep}} \nor{\rd^{\al_2} \psi}{L^{2/(1-\ep)}} \\
&\hspace{100pt} + \nor{\rd^{\al_1} u_k}{L^\infty} \nor{u_k}{L^\infty} \nor{|\nab|^\ep \rd^{\al_2} \psi_m}{L^2}
)\\
&\le C_\wta  \nor{\psi}{H^{L+\ep}} P_L (M_T),
\end{aligned}
\end{equation*}
\begin{equation*}
R_{53} \le C_\wta \nor{\psi}{H^{L+\ep}} P_L(M_T).
\end{equation*}
The other terms in $R_5$ can be similarly estimated to the above. 
Hence (\ref{ac2}) is proved. \bigskip\par
Next we estimate $\nor{\wta}{L^\infty_T X}$. 
Using Lemma \ref{EL1} and (\ref{aa15}), we have
\begin{equation*}
\begin{aligned}
\nor{\wta}{L^\infty_T X} &\le C\left( \nor{A}{L^\infty_TL^4_x} + 
\nor{\nab A}{L^\infty_TL^2_x} + \nor{u-\vec k}{L^\infty_T L^4_x} + 
\nor{\nab u}{L^\infty_T L^2_x}
\right)\\
&\le C\left( \nor{u-\vec k}{L^\infty_T L^2_x} + \nor{\psi}{L^\infty_TL^2_x} +\nor{\psi}{L^\infty_TL^2_x}^2 + \nor{\psi}{L^\infty_TL^4_x}^2   
\right).\\
\end{aligned}
\end{equation*}
Here we observe that the following inequalities hold true:
\begin{equation}\label{ay5}
\nor{\psi}{L^\infty_T L^2_x} \le C \left( 
\nor{\nab u_0}{L^2_x} + T (\nor{\psi}{L^\infty_T L^2_x} + \nor{\psi}{L^\infty_T L^2_x}^3)
\right),
\end{equation}
\begin{equation}\label{ay6}
\nor{\psi}{L^\infty_T L^4_x} \le C \left( 
\nor{\nab u_0}{L^4_x} + T (M_T + M_T^{3/2})
\right),
\end{equation}
Indeed, (\ref{ay5}) immediately follows from (\ref{ab1}), and (\ref{ay6}) follows from
\begin{equation*}
\begin{aligned}
\rd_t \nor{\psi}{L^4_x}^4 
&= 4 \re \int_{\R^2} |\psi_m|^2 \ovl{\psi_m} D_0 \psi_m  dx \\
& = 4\re \sum_{k=1}^2 \int_{\R^2} \rd_k (|\psi_m|^2 ) \ovl{\psi_m} \widetilde{D}_k\psi_m dx
+ 4\re \int_{\R^2} |\psi_m|^2 \ovl{\psi_m} \can_m dx.
\end{aligned}
\end{equation*}
Combining (\ref{aa15}), (\ref{ay5}) and (\ref{ay6}), we have
\begin{equation}\label{bx2}
\nor{\wta}{L^\infty_T X} \le C (\nor{u_0-\vec k}{H^{3/2}} )
+ C(T^{1/2} +T )\left( M_T +M_T^3
\right) .
\end{equation}
Applying (\ref{ac2}) and (\ref{bx2}) to (\ref{bx1}), we obtain the desired estimate.\qed

%
%
%
%

\section{Continuity}\label{Sc6}
In this section, we prove Theorem \ref{T2} (d). 
Our strategy is based on \cite{BIKT}. Namely, we first take a smooth homotopy map connecting the two solutions. Then we can consider the derivative with respect to the new parameter, 
thus can define the associated differentiated field. 
We will show that this quantity has quantitative equivalence to the difference of two solutions, and hence we can reduce the problem to its estimate. \bigskip\par
Letting $\Ome := \R^3\backslash\{ 0\}$, 
we begin by defining $\Pi :\Ome \to \bbs^2$ by 
$\Pi (y) := \frac{y}{|y|}$. 
Then $\Pi$ is smooth, and 
$\Pi (p) = p$ for all $p\in\bbs^2$.\par
Let us first prove (\ref{ca1}). 
Let $u^{(0)}_0, u^{(1)}_0 \in \vec k + H^s$. 
Without loss of generality, we may assume that 
$(1-h) u^{(0)}_0 + h u^{(1)}_0 \in \Ome$ 
by taking $\nor{u^{(1)}_0 -u^{(0)}_0  }{H^s} \ll 1$. 
Furthermore, we may also assume that $u^{(0)}, u^{(1)} \in \vec k + H^\infty$, 
since the general case follows from the continuity of solution map shown later.\par
For $h \in [0,1]$, we define 
\begin{equation*}
u^{(h)}_0 := \Pi \circ \left( (1-h) u^{(0)}_0 + h u^{(1)}_0 \right).
\end{equation*}
Note that this definition is consistent when $h=0,1$. 
Let $u^{(h)}\in C_T (\vec k +H^\infty)$ be the solution with $u^{(h)}|_{t=0}= u^{(h)}_0$. 
It follows that 
$u^{(h)}$ is defined in $t\in [0,T]$ with $T>0$ independent of $h$, 
which is clear from the consequence of Section \ref{Sc5}. 
Then the following estimate holds. 

\begin{prop}\label{Pad1}
For $\sig\ge 0$, there exists a polynomial $P= P_{\lfloor \sig \rfloor}$ such that the following holds for all $t\in [0,T]$ and $h\in [0,1]$.
\begin{equation}\label{ad1}
\nor{\rd_h u^{(h)}}{H^\sig} \le \nor{u^{(1)} - u^{(0)}}{H^\sig} 
P\left( \nor{u^{\max}-\vec k}{H^{\max \{ \sig , 1\} }} \right) ,
\end{equation}
where by $u^{\max}$ we follow the same convention as in Section \ref{Sc3}. 
\end{prop}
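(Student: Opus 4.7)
My plan is to introduce an $h$-differentiated field analogous to the spatial $\psi_m$ of Section \ref{Sc4} and estimate it by the magnetic Schr\"odinger energy method of Section \ref{Sc5}. For each $h\in[0,1]$ let $\{v^{(h)},w^{(h)}\}$ denote the Coulomb-gauge frame of Proposition \ref{EP1} attached to $u^{(h)}$. Since $|u^{(h)}|\equiv 1$ forces $\rd_h u^{(h)}\cdot u^{(h)}=0$, the tangent vector $\rd_h u^{(h)}$ is encoded by the complex field
\begin{equation*}
\psi_h := \rd_h u^{(h)}\cdot v^{(h)} + i\,\rd_h u^{(h)}\cdot w^{(h)}.
\end{equation*}
The geometric reasoning of Section \ref{Sc4} yields $D_h\psi_m=D_m\psi_h$ for $m=0,1,2$ and $[D_h,D_m]=i\,\im(\psi_h\overline{\psi_m})$; combining these with (\ref{g3}) via $D_0\psi_h=D_h\psi_0$ produces a \emph{linear} magnetic Schr\"odinger equation for $\psi_h$, of the same form as (\ref{ab1}) but with $\can_m$ replaced by terms linear in $\psi_h$ with coefficients determined by $u^{(h)}$ and $\psi$.

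Next I would run the energy method of Proposition \ref{Pa1} on this equation. The linearity in $\psi_h$ lets the argument of Section \ref{Sc5} go through with only cosmetic changes, delivering
\begin{equation*}
\nor{\psi_h(t)}{H^\sig}\le C\bigl(\nor{u^{\max}-\vec k}{L^\infty_T H^{\max\{\sig,1\}}},T\bigr)\,\nor{\psi_h(0)}{H^\sig}.
\end{equation*}
Proposition \ref{EP3}--style product estimates then give the equivalence $\nor{\rd_h u^{(h)}}{H^\sig}\sim \nor{\psi_h}{H^\sig}$ up to polynomial factors in $\nor{u^{\max}-\vec k}{H^{\max\{\sig,1\}}}$, converting the above into a bound on $\rd_h u^{(h)}(t)$. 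At $t=0$ the chain rule gives
\begin{equation*}
\rd_h u^{(h)}_0 = D\Pi\bigl((1-h)u^{(0)}_0+h u^{(1)}_0\bigr)\bigl(u^{(1)}_0-u^{(0)}_0\bigr),
\end{equation*}
and since the argument of $D\Pi$ remains in $\Ome$ by the smallness assumption made at the start of this section, standard composition estimates yield $\nor{\rd_h u^{(h)}_0}{H^\sig}\lesssim \nor{u^{(1)}_0-u^{(0)}_0}{H^\sig}\,P(\nor{u^{\max}_0-\vec k}{H^{\max\{\sig,1\}}})$. Chaining the three ingredients delivers (\ref{ad1}).

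The main obstacle will be the energy estimate for $\psi_h$ in the presence of the helicity contribution $-b\sum_l D_h(u_l\psi_l)$. Because $D_h\psi_l=D_l\psi_h$, this term formally contains a derivative of the unknown itself, and as in Section \ref{Sc5} it must be absorbed either into the magnetic potential $\wta$ or, at top order, controlled by the skew-adjoint identity $\int u_l\rd_l|\psi_h|^2\,dx=-\int(\rd_l u_l)|\psi_h|^2\,dx$ (compare the treatment of $I_{55}$ in Section \ref{Sc2}), so that a clean Gr\"onwall inequality closes. A secondary technical point is that the Coulomb frame $(v^{(h)},w^{(h)})$ is determined only up to the gauge angle $\chi$ solving (\ref{az1}); one has to verify that $v^{(h)},w^{(h)}$ can be chosen sufficiently regularly in $h$ for $\psi_h$ to be well defined, though the resulting bound itself will be gauge-invariant.
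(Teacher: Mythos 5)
Proposition \ref{Pad1} is a \emph{static} estimate about the projection $\Pi$, not about the flow: for fixed $t$ and $h$ one bounds $\rd_h u^{(h)}=\bigl((\nab\Pi)\circ((1-h)u^{(0)}+hu^{(1)})\bigr)\cdot(u^{(1)}-u^{(0)})$ by Sobolev product and composition estimates. The paper's proof is exactly this: the integer case is the chain rule, and for fractional $\sig$ one subtracts the constant $\nab\Pi(\vec k)$ (needed because $\nab\Pi$ composed with the interpolant does not decay at infinity) and applies the fractional Leibniz rule and Sobolev embedding, which produces the polynomial in $\nor{u^{\max}-\vec k}{H^{\max\{\sig,1\}}}$. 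No evolution equation enters. In your proposal the only step that coincides with this statement is the remark that ``standard composition estimates yield $\nor{\rd_h u^{(h)}_0}{H^\sig}\lesssim\nor{u^{(1)}_0-u^{(0)}_0}{H^\sig}P(\cdot)$''; that is, you assume the proposition (at $t=0$) rather than prove it, and the genuinely fractional case, which is where the actual work lies, is not addressed.

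The remainder of your argument is not a proof of (\ref{ad1}) but a reconstruction of what the paper does \emph{after} Proposition \ref{Pad1}: the $h$-direction field (called $\psi_3$ in the paper), the identities $D_h\psi_m=D_m\psi_h$, and an energy estimate for the resulting linear magnetic Schr\"odinger equation, which the paper uses to propagate $\nor{\psi_3}{H^{s-1}}$ in time and deduce (\ref{ca1}). Moreover, chaining your three ingredients gives $\nor{\rd_h u^{(h)}(t)}{H^\sig}\le C(T,\dots)\,\nor{u^{(1)}_0-u^{(0)}_0}{H^\sig}$, a bound by the difference of the \emph{initial data}, whereas (\ref{ad1}) bounds $\rd_h u^{(h)}(t)$ by $\nor{u^{(1)}(t)-u^{(0)}(t)}{H^\sig}$ at the same time $t$ with a constant depending only on same-time norms; to convert your bound into the stated one you would need a backward-in-time Lipschitz estimate $\nor{u^{(1)}_0-u^{(0)}_0}{H^\sig}\lesssim\nor{u^{(1)}(t)-u^{(0)}(t)}{H^\sig}$, which is not available at this stage and is of the same nature as the estimate (\ref{ca1}) that this proposition is meant to feed into, so the route is circular. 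Secondary issues point the same way: Proposition \ref{Pa1} is stated only for $L\ge 1$, so an $H^\sig$ energy estimate for the full range $\sig\ge 0$ required here would need a separate argument, and the $h$-regularity of the Coulomb frame would have to be justified. The short static proof by the chain rule and fractional Leibniz avoids all of this.
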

\begin{rmk}
The differentiability with respect to $h$ is justified 
in the following way. 
Let $h_0, h_1 \in [0,1]$. 
Since both of $u^{(h_0)}$, $u^{(h_1)}$ satisfy (\ref{a1}), a similar argument to (\ref{by1}) 
yields
\begin{equation*}
\nor{u^{(h_0)} - u^{(h_1)}}{L^\infty_{t,x}} \le 
C\nor{u^{(h_0)} - u^{(h_1)}}{L^\infty_T H^d} \le
C\nor{u^{(h_0)}_0 - u^{(h_1)}_0}{H^d},
\end{equation*}
where $C=C(\nor{u^{(h_0)}_0 - \vec k}{H^N}, \nor{u^{(h_1)}_0 - \vec k}{H^N})$ 
for sufficiently large $N$. Hence the claim follows from absolute continuity of initial data with respect to $h$.
\end{rmk}
\begin{proof}
When $\sig\in\bbz$, (\ref{ad1}) follows from the chain rule. 
If $\sig \in (0,1)$, we have
\begin{equation*}
\begin{aligned}
&\nor{\rd_h u^{(h)}}{\dot{H}^\sig} \\
&\le 
C \nor{\left( (\nab \Pi) \circ ((1-h)u^{(0)} + hu^{(1)} )\right) \cdot 
 (u^{(1)} - u^{(0)}) }{H^\sig} \\
&\le 
C_\sig \nor{|\nab|^\sig \left(  \left(\nab \Pi \circ 
((1-h) u^{(0)} + hu^{(1)}) - \nab\Pi (\vec k)\right) 
\cdot (u^{(1)}-u^{(0)}) \right)}{L^2_x} \\
&\hspace{190pt} +
C_\sig \nor{u^{(1)} -u^{(0)} }{H^\sig}\\
&\le C_\sig \nor{ |\nab|^\sig \left( 
\nab\Pi \circ ((1-h) u^{(0)} + hu^{(1)}) - \nab\Pi (\vec k)
\right) }{L^{2/\sig}_x}
\nor{u^{(1)}-u^{(0)} }{L^{2/(1-\sig)}_x}\\
&\hspace{15pt} +
C_\sig \nor{\nab\Pi \circ ((1-h) u^{(0)} + hu^{(1)}) - \nab\Pi (\vec k)}{L^\infty_x}
\nor{\nab^\sig (u^{(1)}- u^{(0)}) }{L^2_x}\\
&\hspace{15pt} + C_\sig \nor{u^{(1)}-u^{(0)}}{H^\sig}\\
&\le C_\sig \left( \nor{\nab \left( \nab\Pi \circ ((1-h) u^{(0)} + hu^{(1)}) - \nab\Pi (\vec k)
\right)}{L^2}  + 1\right) \nor{u^{(1)}- u^{(0)}}{H^\sig}\\
&\le C_\sig \nor{u^{(1)}- u^{(0)} }{H^\sig} P (\nor{u^{\max} -\vec k}{H^{\max \{ \sig ,1 \} }}).
\end{aligned}
\end{equation*}
The other case can be proved similarly. \qed
\end{proof}

Next, let $v,w$ be an orthonormal frame of $T_{u^{(h)}} \bbs^2$ with 
\begin{equation*}
v-\vec k_1, w-\vec k_2\in C_{t,h} (L^1 + L^r)\cap C_{t,h} \nab H^\infty,
\end{equation*}
and with
\begin{equation*}
\rd_1 A_1 + \rd_2 A_2 =0 \text{ for all } (x,t,h)\in \R^2\times [0,T]\times [0,1], 
\end{equation*}
where $A_m= A_m(x,t,h) := \rd_j v^{(h)}\cdot w^{(h)}$ for $m=0,1,2$. 
(Such frame can be constructed in the same manner as Section \ref{Sc4}.) 
We also define
$\psi_m$ and $D_m$ for $m=0,1,2$ in the same way as Section \ref{Sc4}. 
Now we assign $m=3$ to the $h$-variable, and define $\psi_3$, $A_3$ in the same manner. 
Then it follows that (\ref{g1}), (\ref{g2}), (\ref{g4}) and (\ref{ab1}) hold even when $m=3$. 
Especially, the same argument as in Sections \ref{Sc4} and \ref{Sc5} 
yields the following estimates:
\begin{equation*}
\nor{\rd_h u^{(h)}}{H^{s-1}} \le \nor{\psi_3}{H^{s-1}} P_{\lfloor s-1 \rfloor} (\nor{\psi}{H^{s-1}}) \quad \text{ for } t\in [0,T],
\end{equation*}
\begin{equation*}
\nor{\psi_3}{H^{s-1}} \le \nor{\rd_h u^{(h)}}{H^{s-1}} P_{\lfloor s-1 \rfloor} (\nor{\nab u^{(h)}}{H^{s-1}}) \quad \text{ for } t\in [0,T],
\end{equation*}
\begin{equation*}
\nor{\psi_3}{L^\infty_TH^{s-1}} \le C \nor{\rd_h u^{(h)}_0}{H^{s-1}} + C T (1+T)^K \nor{\psi_3}{L^\infty_T H^{s-1}}.
\end{equation*}
where $C=C(\nor{u^{\max}_0 - \vec k}{H^{s-1}})>0$, $K>0$ are some constants. Thus we have
\begin{equation*}
\nor{\psi_3}{L^\infty_TH^{s-1}} \le C(\nor{u^{\max}_0 - \vec k}{H^{s}}) \nor{\rd_h u^{(h)}_0}{H^{s-1}}
\end{equation*}
for some $T=T(\nor{u^{\max}_0 - \vec k}{H^{s}})$. Consequently, Proposition \ref{Pad1} yields
\begin{equation*}
\begin{aligned}
\nor{u^{(1)}-u^{(0)} }{L^\infty_T H^{s-1}}
&\le \int_0^1 \nor{\rd_h u^{(h)} }{L^\infty_T H^{s-1}}\\
&\le \int_0^1 \nor{\psi_3}{L^\infty_T H^{s-1}} P(\nor{\psi}{L^\infty_T H^{s-1}})\\
&\le \int_0^1 \nor{\rd_h u^{(h)}_0}{ H^{s-1}} C(\nor{u^{\max}-\vec k}{L^\infty_T H^{s}})\\
&\le C(\nor{u^{\max}_0-\vec k}{H^{s}}) \nor{u^{(1)}_0-u^{(0)}_0}{H^{s-1}},
\end{aligned}
\end{equation*}
which gives (\ref{ca1}).\bigskip\par
We next prove the continuity part. The proof here is based on the typical argument invented in \cite{BS}, while a detailed exposition of this method is available in \cite{ET}. 
Let $s>2$ and 
suppose that $\{ u_0^{(n)}\}_{n\in \bbn}$ is a sequence in $\vec k +H^s$ satisfying $u_0^{(n)} -u_0\to 0$ in $H^s$. 
Let $\vph\in \mathcal{S} (\R^2)$ be a nonnegative function with $\nor{\vph}{L^1}=1$ satisfying $\caf [\vph] = 1$ in a neighborhood of the origin. 
Then we define
\begin{equation*}
u_{0,\eta} := \Pi \circ \left( \vph_\eta * \left( u_0-\vec k \right) + \vec k\right) ,
\end{equation*}
where $\vph_\eta := \eta^{-2} \vph (\eta^{-1} \cdot)$, and also define $u_{0,\eta}^{(n)}$ in the same way. 
Let $u$, $u_\eta$, and $u_{\eta}^{(n)}$ be unique solutions to (\ref{a1}) corresponding to the initial data $u_0$, $u_{0,\eta}$, and $u_{0,\eta}^{(n)}$ respectively. 
Then we have the following properties:
\begin{prop}\label{Px1}
The followings hold true.\\
(i) There exist $\eta_0> 0$, $n_0\in \bbn$ such that $u_{0,\eta}$, $u^{(n)}_{0,\eta}$ are well-defined for all $0< \eta < \eta_0$ and $n\ge n_0$.\\
(ii) $\sup_{n\ge n_0, \eta\ge\eta_0} \nor{u_{0,\eta}^{(n)} -\vec k}{H^{s}} <\infty$. 
Thus the maximal existence time of $u_\eta$ and $u_{\eta}^{(n)}$ are bounded from below uniformly in $\eta$ and $n$. \\
(iii) $\nor{u_{0,\eta} -\vec k}{H^{s+1}} + \sup_{n\ge n_0} \nor{u_{0,\eta}^{(n)} -\vec k}{H^{s+1}}  \le C \eta^{-1}$.\\
(iv) There exists $T>0$, independent of $\eta$, such that 
\begin{equation*}
\nor{u_\eta - u}{L^\infty_T H^s} + \sup_{n\ge n_0} \nor{u_{\eta}^{(n)} - u^{(n)}}{L^\infty_T H^s} \to 0 \text{ as } \eta\to 0.
\end{equation*}
(v) There exists $T>0$, independent of $\eta$, such that 
\begin{equation*}
\nor{u_\eta - u_\eta^{(n)} }{L^\infty_T H^s} \le C(\eta) \nor{u_{0} -u_0^{(n)}}{H^s}.
\end{equation*}
\end{prop}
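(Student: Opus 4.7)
My plan follows the Bona--Smith regularization scheme. Parts (i)--(iii) are mechanical verifications for the mollifier $\vph_\eta$ post-composed with the smooth retraction $\Pi$. For (i), since $\vph\ge 0$ with $\int \vph = 1$ and $u_0\in \vec k + H^s \subset \vec k + C_0$ (by $s>2$), the convolution $\vph_\eta\ast u_0$ is a weighted average of points on $\bbs^2$ which converges uniformly to $u_0$ as $\eta\to 0$; thus $|\vph_\eta\ast u_0|\ge 1/2$ for $\eta\le\eta_0$ and $\Pi$ is well-defined, and the analogous bound for $u_0^{(n)}$ holds uniformly in $n\ge n_0$ because $u_0^{(n)}\to u_0$ in $H^s\hookrightarrow L^\infty$. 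For (ii), I would combine the Plancherel bound $\nor{\vph_\eta\ast f}{H^s}\le \nor{\hat\vph}{L^\infty}\nor{f}{H^s}$ with a Moser composition estimate for $\Pi$ restricted to a fixed compact subset of $\Omega$; the uniform lower bound on the existence time is then Theorem~\ref{T2}(c) applied to a bounded subset of $\vec k + H^s$. For (iii), the standard Bernstein-type bound $\nor{\vph_\eta\ast f}{H^{s+1}}\le C\eta^{-1}\nor{f}{H^s}$ (obtained from $1+|\xi|^2\le \eta^{-2}(1+|\eta\xi|^2)$ together with $(1+|y|^2)|\hat\vph(y)|^2\in L^\infty$, since $\vph\in\cas$), post-composed with $\Pi$ via Moser, yields the claim.

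Part (iv) is the substantive Bona--Smith step. I would exploit the hypothesis $\caf[\vph]=1$ in a neighborhood of the origin: a direct Fourier calculation then gives $\nor{\vph_\eta\ast f - f}{H^{s-1}} = o(\eta)$ for every $f\in H^s$, and pushing this through $\Pi$ by Moser produces $\nor{u_{0,\eta}-u_0}{H^{s-1}}=o(\eta)$. Combined with the Lipschitz bound (\ref{ca1}) and the uniform $H^s$-bounds of (ii), this yields $\nor{u_\eta-u}{L^\infty_T H^{s-1}}=o(\eta)$ on a common interval $[0,T]$. Propagating (iii) through Proposition~\ref{Pa1} at regularity $s+1$ gives $\nor{u_\eta}{L^\infty_T H^{s+1}}\le C\eta^{-1}$ on the same interval. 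The logarithmic convexity $\nor{g}{H^s}\le \nor{g}{H^{s-1}}^{1/2}\nor{g}{H^{s+1}}^{1/2}$ applied to differences $u_\eta-u_{\eta'}$ at comparable scales, combined with the standard Bona--Smith bookkeeping as in \cite{BS,ET}, then produces Cauchy-ness of $\{u_\eta\}$ in $L^\infty_T H^s$; the limit is identified with $u$ via the $H^{s-1}$ convergence. The entire scheme is uniform in $n\ge n_0$.

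For (v), I would apply (\ref{ca1}) at regularity $s+1$ rather than $s$ to the pair $(u_\eta,u_\eta^{(n)})$. By (iii), the quantity $M$ in that estimate is $O(\eta^{-1})$, so it outputs $\nor{u_\eta-u_\eta^{(n)}}{L^\infty_T H^s}\le C(\eta^{-1})\nor{u_{0,\eta}-u_{0,\eta}^{(n)}}{H^s}$; a final Moser estimate on the projection $\Pi$ transfers this to $\nor{u_0-u_0^{(n)}}{H^s}$, accumulating an additional $\eta^{-1}$-dependent factor coming from the $H^{s+1}$-sized derivatives of the arguments of $\Pi$.

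The main obstacle I anticipate is step (iv): although all three ingredients (the Bona--Smith property of $\vph$, the propagation of the $H^{s+1}$-bound through the a priori estimate of Section~\ref{Sc5}, and the Lipschitz estimate (\ref{ca1})) are in hand, the interpolation argument is delicate because $u$ itself lies only in $L^\infty_T H^s$, not in $H^{s+1}$, so one cannot directly compare $u_\eta$ to $u$ in the top regularity but must instead run the Cauchy argument between pairs $u_\eta,u_{\eta'}$ at comparable scales and then pass to the limit with care.
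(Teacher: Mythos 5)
Your proposal is correct and follows essentially the same Bona--Smith scheme as the paper: mollify through $\Pi$, get the uniform $H^s$ bound and the $O(\eta^{-1})$ bound in $H^{s+1}$ propagated by the a priori estimate of Section~\ref{Sc5}, exploit $\caf[\vph]=1$ near the origin for the improved low-norm difference of the regularized data, and interpolate; the only (harmless) deviations are that you interpolate between $H^{s-1}$ (via (\ref{ca1})) and $H^{s+1}$, whereas the paper uses an energy-method $H^1$ difference estimate with gain $o(\eta^{s-1})$ and interpolates between $H^1$ and $H^{s+1}$, and that in (iv) the uniformity in $n$ of the $\eta\to 0$ limit — which the paper checks by a short dominated-convergence computation using $u_0^{(n)}\to u_0$ in $H^s$ — is asserted rather than argued.
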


\begin{proof}
(i) Since $\Pi$ is defined in $\R^3\backslash \{0\}$, it suffices to find $\eta_0>0 ,n_0\in\bbn$ satisfying
\begin{equation}\label{x1}
\nor{u_0^{(n)}- \vec k - \vph_\eta * (u_0^{(n)}- \vec k)}{L^\infty} < 10^{-2}
\end{equation}
for all $0< \eta < \eta_0$ and $n\ge n_0$. 
We observe that for $\Phi \in \cas (\R^2)$, 
the left hand side is bounded by
\begin{equation*}
C (1+ \nor{\vph}{L^1} ) \left( \nor{u_0 - u_0^{(n)} }{H^s} + \nor{u_0-\vec k - \Phi}{H^s} \right) 
+ \nor{\Phi - \vph_\eta *\Phi }{L^\infty}
\end{equation*}
by the Young inequality and the Sobolev embeddings. 
Thus the conclusion follows since $\cas (\R^2)$ is dense in $H^s(\R^2)$. \bigskip\par
(ii), (iii) By (\ref{x1}), the mean value theorem implies
\begin{equation*}
\begin{aligned}
\nor{u_{0,\eta}^{(n)} -\vec k }{L^2}
&\le \nor{\Pi ( \vph_\eta * (u_0^{(n)}- \vec k)  +\vec k) - 
\Pi ( u_0^{(n)}) }{L^2}
+
\nor{ u_0^{(n)} - \vec k}{L^2}\\
&\le 
C \nor{ \vph_\eta * (u_0^{(n)}- \vec k) - (u_0^{(n)}- \vec k) }{L^2}
+
\nor{ u_0^{(n)}  -\vec k}{L^2}\\
&\le C (1+\nor{\vph}{L^1}) \nor{ u_0^{(n)} - \vec k }{L^2},
\end{aligned}
\end{equation*}
which is bounded uniformly in $\eta$ and $n$. 
For $\sig\ge 1$, we have
\begin{equation*}
\begin{aligned}
&\nor{\nab^\sig (u_{0,\eta}^{(n)} -\vec k)}{L^2_x} \\
&\le C \sum_{j=1}^2 \nor{ \left(
\nab\Pi \circ (\vph_\eta * (u^{(n)} -\vec k) + \vec k)
 \right) \cdot \rd_j (\vph_\eta * (u^{(n)} -\vec k) )}{H^{\sig-1}}\\
&\le \nor{\nab \left( \vph_\eta * (u_0^{(n)} - \vec k) \right)}{H^{\sig -1}}
 P(\nor{\vph_\eta * (u^{(n)} -\vec k) }{H^{\sig -1}}),
\end{aligned}
\end{equation*}
where the last inequality follows from the same argument as in the proof of Proposition \ref{Pad1}, which leads to the conclusion.\bigskip\par
%
%
(iv) We first observe the convergence of the first term. 
The argument in Step 1 in the proof of Theorem \ref{T2} in Section \ref{Sc5} yields 
\begin{equation*}
\nor{u_\eta - \vec k}{L^\infty_T H^{s+1}} \le C(\nor{u_{0,\eta} -\vec k}{H^s}) 
\nor{u_{0,\eta} -\vec k}{H^{s+1}}
\end{equation*}
for $T=T(\nor{u_0-\vec k }{H^{s}})$. 
Especially, (iii) implies 
\begin{equation*}
\nor{u_\eta -\vec k}{L^\infty_T H^{s+1}} \le C \eta^{-1}.
\end{equation*}
Thus by interpolation, it suffices to show 
\begin{equation*}
\nor{u_\eta - u_{\eta'}}{H^1} = o(\eta^{s-1}) \quad \text{ for } 0<\eta'< \eta. 
\end{equation*}
We first note that the following $H^1$-difference estimate is true:
\begin{equation*}
\nor{u_\eta - u_{\eta'}}{L^\infty_T H^1} \le C \nor{u_{0,\eta} - u_{0,\eta'}}{H^1}.
\end{equation*}
Indeed, since the embedding $\nab^{-1} L^\infty \supset H^s$ holds for $s>2$, 
we can use in Section \ref{Sc3} the energy method instead of Yudovich argument, 
which yields the estimate of the form $G(T)\le CG(0)$ 
(see also \cite{M}). Hence we have
\begin{equation}\label{ax2}
\begin{aligned}
\eta^{2-2s} \nor{u_\eta - u_{\eta'}}{L^\infty_T H^1}^2 
&\le C \eta^{2-2s} \nor{u_{0,\eta} - u_{0,\eta'} }{H^1}^2\\
&\le C \eta^{2-2s} \nor{ (\vph_\eta - \vph_{\eta'} ) * (u_0 -\vec k) }{H^1}^2\\
&\le C \eta^{2-2s} \int_{\R^2} | \caf [\vph_\eta] (\xi) - \caf [\vph_{\eta'}] (\xi) |^2
|\caf [u_0 -\vec k] (\xi)|^2 \jb{\xi}^2 d\xi.
\end{aligned}
\end{equation}
By the definition of $\vph$, the Taylor theorem and interpolation yield
\begin{equation*}
|\caf[\vph_\eta] (\xi) -1 |\le C \eta^{s-1} |\xi|^{s-1} 
\left( 
\sup_{|\xi'|\le \eta |\xi|} |\rd^{s} \caf \vph(\xi')| + \sup_{|\xi'|\le \eta |\xi|} |\rd^{s+1} \caf \vph(\xi')| 
\right). 
\end{equation*}
Thus (\ref{ax2}) is bounded by
\begin{equation*}
C\int_{\R^2} \left( 
\sup_{|\xi'|\le \eta |\xi|} |\rd^{s} \caf \vph(\xi')| + \sup_{|\xi'|\le \eta |\xi|} |\rd^{s+1} \caf \vph(\xi')| 
\right) |\caf [u_0-\vec k](\xi)|^2 \jb{\xi}^{s} d\xi \to 0
\end{equation*}
as $\eta\to 0$ by the dominant convergence theorem. \par
The convergence of the second term can be similarly shown since
\begin{equation*}
\begin{aligned}
&\int_{\R^2} 
\sup_{|\xi'|\le \eta |\xi|} |\rd^{s} \caf \vph(\xi')| + \sup_{|\xi'|\le \eta |\xi|} |\rd^{s+1} \caf \vph(\xi')| 
 |\caf [u^{(n)}_0 - u_0] (\xi) |^2 \jb{\xi}^s d\xi\\
&\le C \nor{u^{(n)}_0 - u_0}{H^s} \to 0
\end{aligned}
\end{equation*}
as $n\to\infty$ uniformly in $\eta$.\bigskip\par 
(v) Applying (\ref{ca1}), we have
\begin{equation*}
\begin{aligned}
\nor{u_\eta - u_\eta^{(n)}}{L^\infty_T H^s} 
&\le C(\nor{u_{0,\eta} -\vec k}{H^{s+1}},
\nor{u_{0,\eta}^{(n)} -\vec k}{H^{s+1}}
)
\nor{u_{0,\eta} - u_{0,\eta}^{(n)}}{H^s}\\
&\le C(\eta) \nor{u_0 - u_0^{(n)}}{H^s},
\end{aligned}
\end{equation*}
which completes the proof.\qed
\end{proof}

We finally show that Proposition \ref{Px1} implies the continuity. 
Let $\ep>0$ be arbitrary number. Then there exists $T>0$ such that for $\eta \ge \eta_0$ and $n> n_0$, 
we have
\begin{equation*}
\begin{aligned}
\nor{u - u^{(n)}}{L^\infty_T H^s} &\le \nor{u-u_\eta}{L^\infty_T H^s} 
+ \nor{u_\eta - u_\eta^{(n)}}{L^\infty_T H^s} +\nor{u_\eta^{(n)}-u^{(n)}}{L^\infty_TH^s} \\
&\le 
\nor{u-u_\eta}{L^\infty_TH^s} 
+ C(\eta) \nor{u_0 - u_0^{(n)} }{H^s} + \sup_{n\ge n_0} \nor{u^{(n)}-u_\eta^{(n)}}{L^\infty_TH^s}.
\end{aligned}
\end{equation*}
We fix sufficiently small $\eta$, then (iv) gives
\begin{equation*}
\nor{u - u^{(n)}}{L^\infty_TH^s} \le 2\ep  
+ C(\eta) \nor{u_0 - u_0^{(n)} }{H^s}.
\end{equation*}
Therefore, taking $n\to \infty$, we have $\lim_{n\to \infty} \nor{u - u^{(n)}}{L^\infty_T H^s} =0$ since $\ep>0$ is arbitrary, which finishes the proof.\bigskip\par


\textit{Acknowledgments}. 
The author would like to thank Yoshio Tsutsumi for the supervision and giving him a lot of useful suggestions. 
The author also wishes to thank Stephen Gustafson for encouraging him to study the problem in the present paper and giving him helpful comments. 
The author was supported by Grand-in-Aid for JSPS Fellows 
18J21037. 

\noindent{}Ikkei Shimizu\par 
\noindent{}Department of Mathematics, Graduate School of Science, Kyoto University, 
Oiwakecho, Kitashirakawa, Sakyoku, Kyoto, 
606-8502, Japan

\end{document}